\newtheorem{theorem}{Theorem}[section]
\newtheorem{proposition}[theorem]{Proposition}
\newtheorem{lemma}[theorem]{Lemma}
\newtheorem{corollary}[theorem]{Corollary}
\theoremstyle{definition}
\newtheorem{definition}[theorem]{Definition}
\theoremstyle{remark}
\newtheorem{remark}[theorem]{Remark}
\numberwithin{equation}{section}
\begin{document}

\title{Cusp cobordism group of Morse functions}

\author[D.J. Wrazidlo]{Dominik J. Wrazidlo}

\address{Institute of Mathematics for Industry, Kyushu University, Motooka 744, Nishi-ku, Fukuoka 819-0395, Japan}
\email{d-wrazidlo@imi.kyushu-u.ac.jp}

\subjclass[2010]{57R45, 57R90, 57R35, 58K15, 58K65}

\date{\today.}
\keywords{Cobordism of differentiable maps, fold map, elimination of cusps, stable smooth map, manifold with boundary, Euler characteristic, Morse equalities}

\begin{abstract}
By a Morse function on a compact manifold with boundary we mean a real-valued function without critical points near the boundary such that its critical points as well as the critical points of its restriction to the boundary are all non-degenerate.
For such Morse functions, Saeki and Yamamoto have previously defined a certain notion of cusp cobordism, and computed the unoriented cusp cobordism group of Morse functions on surfaces.
In this paper, we compute unoriented and oriented cusp cobordism groups of Morse functions on manifolds of any dimension by employing Levine's cusp elimination technique as well as the complementary process of creating pairs of cusps along fold lines.
We show that both groups are cyclic of order two in even dimensions, and cyclic of infinite order in odd dimensions.
For Morse functions on surfaces our result yields an explicit description of Saeki-Yamamoto's cobordism invariant which they constructed by means of the cohomology of the universal complex of singular fibers.
\end{abstract}

\maketitle

\section{Introduction}

In differential topology, cobordism groups of maps with prescribed singularity type can generally been studied by means of stable homotopy theory and related methods of algebraic topology.
The first fundamental result in the topic is due to Ren\'{e} Thom \cite{thom}, who applied the Pontrjagin-Thom construction to study embedded submanifolds in a Euclidean space up to cobordism.
An adaption of Thom's approach to immersions of manifolds was given by Wells \cite{wel}.
Later, Rim\'{a}nyi and Sz\H{u}cs \cite{rim} used the concept of $\tau$-maps to develop a Pontrjagin-Thom type construction in order to study
cobordism of maps of positive codimension with given types of stable singular map germs.
In the sequel, their results have been extended vastly by several authors including Ando \cite{ando}, Kalm\`{a}r \cite{kal1}, Sadykov \cite{sad2}, and Sz\H{u}cs \cite{szu}.

In the special case of maps with target dimension one, the $C^{\infty}$ stable singular map germs $(\mathbb{R}^{n}, 0) \rightarrow (\mathbb{R}, 0)$ are precisely non-degenerate critical points, and cobordism theory for such maps is based on $C^{\infty}$ stable singular map germs $(\mathbb{R}^{n+1}, 0) \rightarrow (\mathbb{R}^{2}, 0)$, namely fold points and cusps.
Several authors have studied various notions of cobordism of Morse functions by invoking geometric-topological methods.
Ikegami \cite{ike} uses Levine's cusp elimination technique \cite{lev} to compute cobordism groups of Morse functions on closed manifolds.
His work generalizes previous results of Ikegami-Saeki \cite{ikesae} for Morse functions on oriented surfaces, and of Kalm\`{a}r \cite{kal} for Morse functions on unoriented surfaces.
Later, Ikegami-Saeki \cite{is} modified Ikegami's approach in order to study cobordism groups of Morse maps into the circle.
Using the technique of Stein factorization, Saeki \cite{sae} showed that cobordism groups of Morse functions with only maxima and minima as their singularities are isomorphic to the group of homotopy spheres.
In \cite{wra3}, the author considers cobordism of Morse functions subject to more general index constraints, and shows that exotic Kervaire spheres can be distinguished from other exotic spheres as elements of such cobordism groups in infinitely many dimensions.

Another direction of research has been initiated in \cite{sy1}, where Saeki and Yamamoto study cobordism groups of Morse functions on compact manifolds possibly with boundary.
More precisely, they consider $C^{\infty}$ stable real-valued functions up to so-called \emph{admissible cobordism}, a notion which is based on proper $C^{\infty}$ stable maps of manifolds possibly with boundary into the plane that are submersions near the boundary.
For Morse functions on compact unoriented surfaces possibly with boundary, Saeki and Yamamoto derive a cobordism invariant with values in the cyclic group of order two.
Their method is based on an examination of the cohomology of the universal complex of singular fibers \cite{sae2, sae3}.
Furthermore, in \cite{sy3} they present a combinatorial argument using labeled Reeb graphs to show that no further non-trivial invariants exist.
Consequently, the admissible cobordism group of such Morse functions is in fact isomorphic to the cyclic group of order two, which answers a conjecture posed in \cite{sy2}.
Recently, cusp-free versions of cobordism for Morse functions on compact unoriented surfaces with boundary have been studied by Yamamoto \cite{yam} by means of similar techniques.
\par\medskip

In this paper, we introduce the notion of \emph{cusp cobordism} for Morse functions on compact manifolds possibly with boundary (see \Cref{definition cusp cobordism}), and compute the unoriented and oriented versions of the associated cobordism group in dimension $\geq 2$.
We show that both groups are cyclic of order two in even dimensions, and cyclic of infinite order in odd dimensions (see \Cref{main theorem} and \Cref{remark orientation}).
The notion of cusp cobordism is based on maps into the plane that are generic (see \Cref{definition generic maps into the plane}), which means that the critical point set consists of only fold points and cusps.
Roughly speaking, the distinction between even and odd dimensions in our result is related to the phenomenon that circle-shaped components of the singular set of a generic map can contain an odd number of cusps if and only if the dimension of the source manifold is even.
In order to control the parity of the number of cusps on the components of the singular set we combine Levine's cusp elimination technique (see \cite{lev} and \Cref{Elimination of cusps}) with the complementary process of creating pairs of cusps along fold lines (see \Cref{creation of cusps}).
Moreover, for generic maps between surfaces, we use the method of singular patterns developed by the author in \cite{wra2}.

Note that our notion of cusp cobordism differs slightly from that of admissible cobordism in that we use generic maps into the plane without making $C^{\infty}$ stability assumptions on the maps.
Nevertheless, we show in \Cref{proposition admissible cobordism group} that both notions give rise to isomorphic cobordism groups.
Thus, for Morse functions on unoriented surfaces we obtain an explicit description of Saeki-Yamamoto's cobordism invariant of \cite{sy1}.
What is more, our results answer Problem 6.1 and Problem 6.2 that were posed by Saeki and Yamamoto in \cite{sy3} concerning the computation of admissible cobordism groups in higher dimensions.

The paper is structured as follows.
In \Cref{Statement of result} we state our main results in detail.
In \Cref{Preliminaries on generic maps into the plane} we provide some background from singularity theory of generic maps into the plane.
\Cref{Non-singular extensions} and \Cref{Creation and elimination of cusps} are concerned with several technical results that will be used in the proof of our main result.
The proof of \Cref{main theorem} is given in \Cref{proof of main theorem}.
Finally, \Cref{admissible cobordism group} relates our notion of cusp cobordism to Saeki-Yamamoto's notion of admissible cobordism.

Unless otherwise stated, all manifolds (possibly with boundary) and maps between manifolds are assumed to be smooth, that is, differentiable of class $C^{\infty}$.
Given a map $f \colon M \rightarrow N$ between manifolds, the set of singular points of $f$ will be denoted by $S(f)$.
For an oriented manifold $M$, the manifold with opposite orientation will be denoted by $-M$.
We denote the cardinality of a set $X$ by $\# X$.

\subsection*{Acknowledgements}
The author would like to express his gratitude to Professor Osamu Saeki for stimulating discussions.

This work was done while the author was an International Research Fellow of Japan Society for the Promotion of Science (Postdoctoral Fellowships for Research in Japan (Standard)).

\section{Statement of main result}\label{Statement of result}

Given a real-valued function $h \colon U \rightarrow \mathbb{R}$ defined on an $m$-manifold without boundary $U$, a critical point $x \in S(h)$ is non-degenerate if there is a chart of $U$ centered at $x$ in which $h$ takes the form
$$
(x_{1}, \dots, x_{m}) \mapsto h(x) - x_{1}^{2} - \dots - x_{i}^{2} + x_{i+1}^{2} + \dots + x_{m}^{2}.
$$
We also recall that the number $i$ of minus signs appearing in the above standard quadratic form does not depend on the choice of the chart centered at $x$, and is called the (Morse) index of $h$ at $x$.
In the following, we will denote the set of non-degenerate critical points of $h$ of Morse index $i$ by $S^{i}(h)$.

Let $n \geq 2$ be an integer.
Throughout this paper, by a Morse function on a compact $n$-manifold possibly with boundary $M^{n}$ (see \Cref{figure morse example}) we mean a real-valued function $f \colon M \rightarrow \mathbb{R}$ which is a submersion at every point of $\partial M$, and such that the critical points of both $f$ and $f|_{\partial M}$ are all non-degenerate.
Note the difference of our setting to others, where Morse functions are allowed to have critical points on the boundary (for example, see Definition 1.4 in \cite{bnr}).
Our notion of Morse functions on $M^{n}$ arises naturally in the context of stable mappings.
Namely, as pointed out in the introduction of \cite{sy3}, a function $f \colon M \rightarrow \mathbb{R}$ is known to be $C^{\infty}$ stable if and only if $f$ is a Morse function that is injective on $S(f) \sqcup S(f|_{\partial M})$.

\begin{figure}[htbp]
\centering
\fbox{\begin{tikzpicture}
    \draw (0, 0) node {\includegraphics[height=0.4\textwidth]{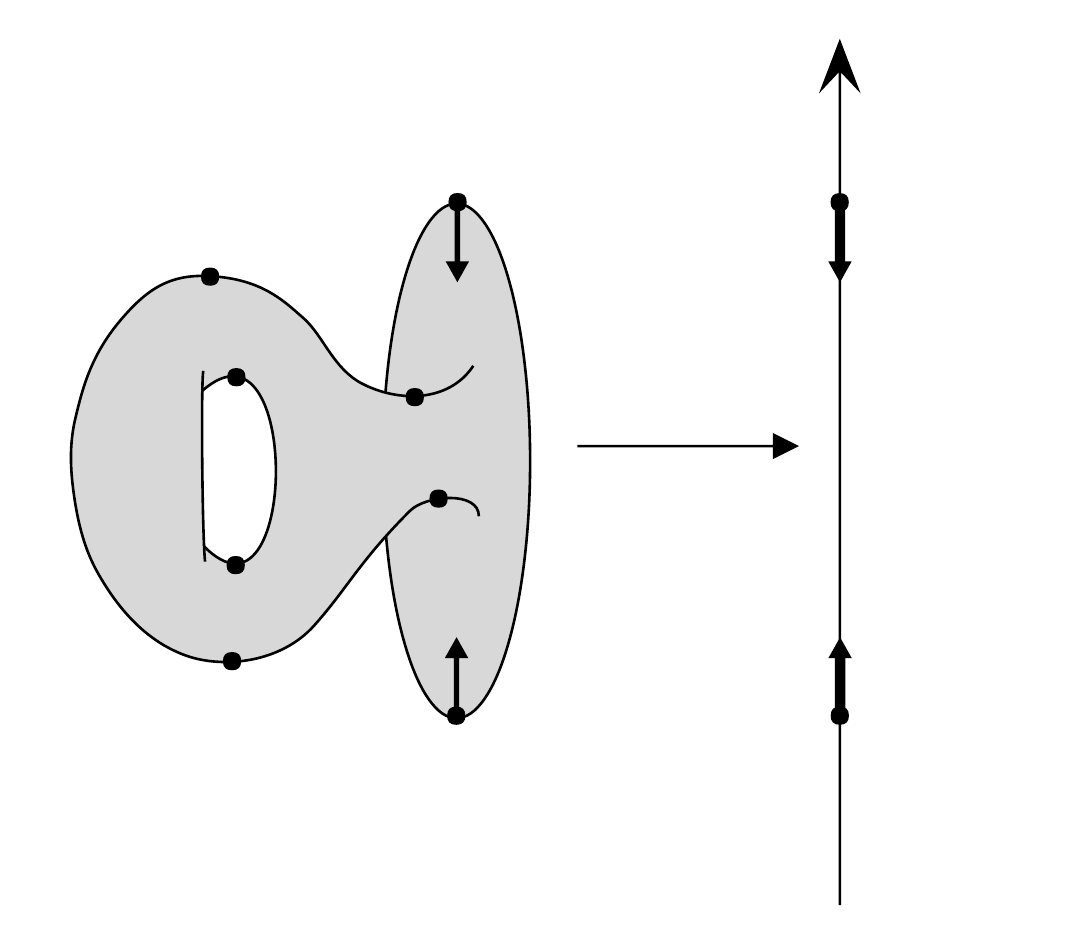}};
    \draw (2.1, 2.2) node {$\mathbb{R}$};
    \draw (2.5, 1.2) node {$df_{x_{1}}(v_{1})$};
    \draw (-0.35, -1.55) node {$x_{0}$};
    \draw (0.8, 0.4) node {$f$};
    \draw (-1.9, -1.6) node {$M^{2}$};
    \draw (-0.35, 1.65) node {$x_{1}$};
    \draw (2.5, -1.1) node {$df_{x_{0}}(v_{0})$};
\end{tikzpicture}}
\caption{Illustration of a Morse function $f \colon M \rightarrow \mathbb{R}$ on a compact surface with boundary embedded in $\mathbb{R}^{3}$ induced by the height function.
The critical points of $f|_{\partial M}$ are $x_{0}$ and $x_{1}$.
Using inward pointing tangent vectors $v_{0} \in T_{x_{0}}M$ and $v_{1} \in T_{x_{1}}M$ as indicated, we see that $\sigma_{f}(x_{0}) = +1$ and $\sigma_{f}(x_{1}) = -1$.
Hence, $S_{+}[f] = S_{+}^{0}[f] = \{x_{0}\}$, and $\chi_{+}[f] = 1$.}
\label{figure morse example}
\end{figure}

In this paper, we are concerned with cobordism theory of Morse functions on compact manifolds possibly with boundary.
Our notion of cusp cobordism (see \Cref{definition cusp cobordism} below) uses the following well-known cobordism relation for pairs $(M_{0}, M_{1})$ of compact oriented $n$-manifolds possibly with boundary.
An oriented cobordism $(W, V)$ from $M_{0}$ to $M_{1}$ (compare \Cref{figure injectivity}) is a compact oriented $(n+1)$-manifold with corners $W$ such that there is a decomposition $\partial W = M_{0} \cup_{\partial M_{0}} V \cup_{- \partial M_{1}} - M_{1}$, where $M_{0}$, $-M_{1}$ and $V$ are oriented codimension $0$ submanifolds of $\partial W$ satisfying $M_{0} \cap M_{1} = \emptyset$ as well as $V \cap M_{0} = \partial M_{0}$ and $V \cap M_{1} = \partial M_{1}$, $V^{n}$ is an oriented cobordism from $\partial M_{0}$ to $\partial M_{1}$, and $W$ has corners precisely along $\partial V$.
Any two compact oriented $n$-manifolds possibly with boundary can be seen to be oriented cobordant.
However, it might not be possible to extend given Morse functions on them to a cusp cobordism in the sense of the following definition.

\begin{definition}[cusp cobordism]\label{definition cusp cobordism}
For $i = 0, 1$ let $f_{i} \colon M_{i} \rightarrow \mathbb{R}$ be a Morse function defined on a compact oriented $n$-manifold possibly with boundary $M_{i}$.
A \emph{cusp cobordism} from $f_{0}$ to $f_{1}$ is an oriented cobordism $(W, V)$ from $M_{0}$ to $M_{1}$ together with a map $F \colon W \rightarrow [0, 1] \times \mathbb{R}$ such that $F^{-1}(\mathbb{R} \times \{i\}) = M_{i}$ for $i = 0, 1$, and the following properties hold:
\begin{enumerate}[(i)]
\item For some $\varepsilon > 0$ there exist collar neighborhoods (with corners) $[0, \varepsilon) \times M_{0} \subset W$ of $\{0\} \times M_{0} = M_{0} \subset W$ and $(1-\varepsilon, 1] \times M_{1} \subset W$ of $\{1\} \times M_{1} = M_{1} \subset W$ such that $F|_{[0, \varepsilon) \times M_{0}} = \operatorname{id}_{[0, \varepsilon)} \times f_{0}$ and $F|_{(1-\varepsilon, 1] \times M_{1}} = \operatorname{id}_{(1-\varepsilon, 1]} \times f_{1}$.
\item The restriction $F|_{W \setminus (M_{0} \sqcup M_{1})}$ is a submersion at every point of the boundary $V \setminus \partial V$ of $W \setminus (M_{0} \sqcup M_{1})$.
\item The restrictions $F|_{W \setminus \partial W}$ and $F|_{V \setminus \partial V}$ are generic maps into the plane, that is, their singular sets consist of fold points and cusps (see \Cref{definition generic maps into the plane}).
\end{enumerate}
\end{definition}

\begin{remark}\label{remark target plane}
In the formulation of \Cref{definition cusp cobordism} we may equivalently use maps $W \rightarrow \mathbb{R}^{2}$ instead of maps $F \colon W \rightarrow [0, 1] \times \mathbb{R}$ such that $F^{-1}(\mathbb{R} \times \{i\}) = M_{i}$ for $i = 0, 1$.
In fact, any map $F_{0} \colon W \rightarrow \mathbb{R}^{2}$ with the properties (i) to (iii) of \Cref{definition cusp cobordism} can be modified to a map $F \colon W \rightarrow [0, 1] \times \mathbb{R}$ such that $F^{-1}(\mathbb{R} \times \{i\}) = M_{i}$ for $i = 0, 1$ as follows.
Since $W$ is compact, we may consider $F_{0}$ for suitable $a < 0$ and $b > 1$ as a map $F_{0} \colon W \rightarrow [a, b] \times \mathbb{R}$ such that $F_{0}(W) \subset (a, b) \times \mathbb{R}$.
Next, we choose a diffeomorphism $\xi \colon [0, 1] \stackrel{\cong}{\longrightarrow} [a, b]$ that extends the identity map on $(\varepsilon/2, 1-\varepsilon/2)$ (where $\varepsilon > 0$ is taken from property (i) of $F_{0}$).
Then, we modify $F_{0}$ near $\partial W$ by defining the map $F_{1} \colon W \rightarrow [a, b] \times \mathbb{R}$ via $F_{1}|_{[0, \varepsilon) \times M_{0}} = \xi|_{[0, \varepsilon)} \times f_{0}$, $F_{1}|_{(1-\varepsilon, 1] \times M_{1}} = \xi|_{(1-\varepsilon, 1]} \times f_{1}$, and $F_{1}(x) = F_{0}(x)$ for all $x \in W$ with $x \notin ([0, \varepsilon/2] \times M_{0}) \sqcup ([1-\varepsilon/2, 1] \times M_{1})$.
Finally, the composition $F = (\xi^{-1} \times \operatorname{id}_{\mathbb{R}}) \circ F_{1} \colon W \rightarrow [0, 1] \times \mathbb{R}$ has the desired properties.
Note that the properties (i) to (iii) of \Cref{definition cusp cobordism} are not affected by our modifications.
\end{remark}

The relation of cusp cobordism defined above is an equivalence relation on the set of Morse functions on compact oriented $n$-manifolds possibly with boundary.
Moreover, the set $\mathcal{C}_{n}$ of equivalence classes has a natural group structure induced by disjoint union.
Note that the identity element is represented by the unique function $f_{\emptyset} \colon \emptyset \rightarrow \mathbb{R}$ on the empty set, and the inverse of a class $[f \colon M \rightarrow \mathbb{R}]$ is represented by the function $-f \colon - M \rightarrow \mathbb{R}$, $(-f)(x) = -f(x)$, where we recall that $-M$ denotes the manifold $M$ with reversed orientation.
The purpose of this paper is to determine the structure of the group $\mathcal{C}_{n}$ for $n \geq 2$ (see our main result, \Cref{main theorem}).
For lower dimensional versions of $\mathcal{C}_{n}$, see \Cref{remark lower dimension}.

Before stating \Cref{main theorem}, we need to introduce some notation for functions defined on manifolds possibly with boundary.
First, we recall that the Euler characteristic $\chi(K)$ of a finite CW complex $K$ can be defined as the number of even dimensional cells minus the number of odd dimensional cells of $K$.
The Euler characteristic is known to depend only on the homotopy type of $K$.
If $h \colon P \rightarrow \mathbb{R}$ is a Morse function on a closed $(n-1)$-manifold $P$, then by classical Morse theory \cite{mil2}, $P$ is homotopy equivalent to a finite CW complex of dimension $n-1$ whose $i$-cells correspond to the critical points in $S^{i}(h)$.
Hence, the Euler characteristic of $P$ is given by
\begin{align}\label{euler characteristic}
\chi(P) = \sum_{i=0}^{n-1} (-1)^{i} \cdot \# S^{i}(h).
\end{align}
Next, we consider a real-valued function $g \colon N^{n} \rightarrow \mathbb{R}$ defined on some $n$-manifold possibly with boundary.
Let us suppose that $g$ is a submersion in a neighborhood of the boundary, and that $g$ restricts to a Morse function $\partial N \rightarrow \mathbb{R}$.
Then, we can assign to every critical point $x$ of the Morse function $g|_{\partial N}$ a sign $\sigma_{g}(x) \in \{\pm 1\}$ that is uniquely determined by requiring that for an \emph{inward} pointing tangent vector $v \in T_{x}N$ the tangent vector
$$
\sigma_{g}(x) \cdot dg_{x}(v) \in T_{g(x)} \mathbb{R} = \mathbb{R}
$$
points into the \emph{positive} direction of the real axis.
In fact, the resulting assignment
\begin{align}\label{sign function}
\sigma_{g} \colon S(g|_{\partial N}) \rightarrow \{\pm 1\}
\end{align}
depends only on the germ $[g]$ of $g$ near $\partial N$.
Let $S_{+}[g] \subset S(g|_{\partial N})$ denote the subset of those critical points $x$ of the Morse function $g|_{\partial N}$ for which $\sigma_{g}(x) = +1$, and let $S_{+}^{i}[g] = S^{i}(g|_{\partial N}) \cap S_{+}[g]$.
If $\partial N$ is compact, then the number of critical points of $g|_{\partial N}$ is finite, and in analogy with \Cref{euler characteristic} we define the integer
\begin{align}\label{signed Euler characteristic}
\chi_{+}[g] = \sum_{i=0}^{n-1} (-1)^{i} \cdot \# S_{+}^{i}[g].
\end{align}
In particular, every Morse function $f \colon M \rightarrow \mathbb{R}$ defined on a compact $n$-manifold possibly with boundary has an associated integer $\chi_{+}[f]$ (for example, compare \Cref{figure morse example} and \Cref{figure morse generator example}).

Our main result is the following

\begin{theorem}\label{main theorem}
Let $n \geq 2$ be an integer.
Assigning to every Morse function $f \colon M^{n} \rightarrow \mathbb{R}$ on a compact oriented $n$-manifold possibly with boundary the integer $\chi(M) - \chi_{+}[f]$ induces group isomorphisms
\begin{align*}
\mathcal{C}_{n} \stackrel{\cong}{\longrightarrow} \begin{cases}
\mathbb{Z}/2, \qquad &n \text{ even}, \\
\mathbb{Z}, \qquad &n \text{ odd}.
\end{cases}
\end{align*}
\end{theorem}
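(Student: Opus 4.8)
The plan is to show that the rule $[f \colon M^{n} \to \mathbb{R}] \mapsto \chi(M) - \chi_{+}[f]$, reduced modulo $2$ when $n$ is even, defines a homomorphism $\Phi \colon \mathcal{C}_{n} \to \mathbb{Z}$ (for $n$ odd) resp. $\mathbb{Z}/2$ (for $n$ even), and then that $\Phi$ is bijective. I would first record the reduction
\begin{align*}
\chi(M) - \chi_{+}[f] = \sum_{i=0}^{n} (-1)^{i} \cdot \# S^{i}(f),
\end{align*}
which follows by tracking how $\chi(f^{-1}((-\infty, t]))$ changes as $t$ crosses a critical value: an interior critical point of index $\lambda$ attaches a $\lambda$-handle and contributes $(-1)^{\lambda}$, whereas a critical point of $f|_{\partial M}$ of index $\mu$ contributes $(-1)^{\mu}$ if $\sigma_{f} = +1$ and nothing if $\sigma_{f} = -1$. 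In this form $\Phi$ is manifestly additive under disjoint union, hence a homomorphism once well-defined, and it satisfies $\Phi(-f) = (-1)^{n}\Phi(f)$, compatibly with $-[f] = [-f]$ in $\mathcal{C}_{n}$.

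\textbf{Well-definedness.} Let $(W, V, F)$ be a cusp cobordism from $f_{0}$ to $f_{1}$ as in \Cref{definition cusp cobordism}. The collar conditions force the closure $\Sigma$ of the singular set of $F|_{W \setminus \partial W}$ to be a compact $1$-manifold properly embedded in $W$, disjoint from $V$, meeting $M_{0} \sqcup M_{1}$ transversally exactly along $S(f_{0}) \sqcup S(f_{1})$, and carrying finitely many cusps. The fold index (number of negative squares in the local normal form) is locally constant along fold arcs and jumps by $\pm 1$ at each cusp, and near an endpoint $x \in S^{i}(f_{j})$ it equals $i$ since there $F = \operatorname{id} \times f_{j}$. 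Multiplying $(-1)^{\text{fold index}}$ over the endpoints of the non-closed components of $\Sigma$ therefore yields
\begin{align*}
(-1)^{\Phi(f_{0}) + \Phi(f_{1})} = (-1)^{c},
\end{align*}
where $c$ counts the cusps on those components. One then shows that $c$ is even: the cusps on circle components of $\Sigma$ cancel in pairs when $\dim W = n+1$ is odd (the even-$n$ case), and a characteristic-class computation controls the remaining count of cusps of the generic map $F|_{W \setminus \partial W}$. For $n$ odd the orientation of $W$ and the evenness of $\dim W$ let one attach signs to the cusps; the signed refinement of the displayed identity, together with $\chi(M_{j}) = \frac{1}{2}\chi(\partial M_{j})$ and the genericity of $F|_{V \setminus \partial V}$, upgrades the congruence to the equality $\Phi(f_{0}) = \Phi(f_{1})$ in $\mathbb{Z}$.

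\textbf{Surjectivity and injectivity.} A small linear perturbation $f_{\mathrm{gen}}$ of the squared-distance function on $D^{n}$ is a Morse function with exactly one interior critical point and $S_{+}[f_{\mathrm{gen}}] = \emptyset$, so $\Phi([f_{\mathrm{gen}}]) = \chi(D^{n}) = 1$ (compare \Cref{figure morse generator example}); since $\Phi$ is a homomorphism this gives surjectivity in both parities. Granting well-definedness, injectivity is equivalent to the two assertions that every $[f] \in \mathcal{C}_{n}$ equals $\Phi(f) \cdot [f_{\mathrm{gen}}]$, and that $2[f_{\mathrm{gen}}] = 0$ when $n$ is even. Since any two compact oriented $n$-manifolds with boundary are oriented cobordant, a manifold-level cobordism $W$ from $M$ to a disjoint union of $n$-disks always exists, and the task is to build a generic map $F \colon W \to [0,1] \times \mathbb{R}$ on (a modification of) $W$ restricting to the prescribed Morse functions at the two ends. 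I would do this by first extending the boundary data to a map that is a submersion off a prescribed properly embedded $1$-manifold $\Sigma \subset W$ whose boundary is the union of the interior critical point sets of the two end functions, using the non-singular extension results of \Cref{Non-singular extensions}; and then adjusting the number of cusps on each arc of $\Sigma$ by the creation and elimination moves of \Cref{Creation and elimination of cusps} (Levine's technique \cite{lev} and its converse) until the fold index along the arc matches the prescribed Morse index at each endpoint, the generic self-maps of surfaces arising on the slice $V$ when $n = 2$ being handled via the singular patterns of \cite{wra2}. When $\Phi(f) = 0$ the cusp count forced onto the non-closed components of $\Sigma$ is even, which makes the construction succeed and produces a cusp cobordism to $f_{\emptyset}$; for $n$ even the same analysis reduces the remaining relation to an explicit construction of a cusp cobordism $f_{\mathrm{gen}} \sqcup f_{\mathrm{gen}} \sim f_{\emptyset}$.

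\textbf{Main obstacle.} I expect the technical heart to be this injectivity construction: realizing a singular $1$-manifold $\Sigma \subset W$ with prescribed boundary and prescribed fold behaviour at the endpoints while keeping $F$ a submersion elsewhere, and matching the cusp count on every component. The governing obstruction is the parity, computed in the well-definedness step, of the number of cusps forced onto the non-closed components of $\Sigma$; and the fact that a circle component of $\Sigma$ can absorb an odd number of cusps only when $\dim W$ is even — that is, only when $n$ is odd — is precisely what separates the $\mathbb{Z}$-answer from the $\mathbb{Z}/2$-answer.
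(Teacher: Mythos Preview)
Your approach is genuinely different from the paper's: you work with the interior singular locus $\Sigma = \overline{S(F|_{W\setminus\partial W})}$ inside $W$, whereas the paper works entirely on the \emph{side} $V$ of the cobordism, analysing $S(F|_{V})$ via \Cref{proposition extension of G}, \Cref{proposition cusps and vector fields}, and \Cref{proposition condition n even}/\Cref{proposition condition n odd}. Your reduction $\chi(M)-\chi_{+}[f]=\sum_{i}(-1)^{i}\#S^{i}(f)$ to interior critical points is correct and is not used in the paper; it makes the even-$n$ well-definedness almost immediate, since $\Phi(f_{0})+\Phi(f_{1})\equiv\#S(f_{0})+\#S(f_{1})=\#\partial\Sigma\equiv 0\pmod 2$.

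However, your displayed identity $(-1)^{\Phi(f_{0})+\Phi(f_{1})}=(-1)^{c}$ does not follow from ``multiplying $(-1)^{\text{fold index}}$ over the endpoints'': that product is $(-1)^{\sum_{x\in\partial\Sigma}i(x)}$, which equals $(-1)^{c}$ by the parity jump at cusps, but is \emph{not} in general $(-1)^{\Phi(f_{0})+\Phi(f_{1})}$ (the latter reduces mod $2$ to $(-1)^{\#\partial\Sigma}$, not $(-1)^{\sum i(x)}$). Your conclusion for $n$ even is salvageable by the simpler argument above, but the argument as written is wrong. More seriously, for $n$ odd your sketch (``attach signs to cusps and upgrade to an equality in $\mathbb{Z}$'') is a genuine gap: you would need a signed cusp formula for generic maps $W^{n+1}\to\mathbb{R}^{2}$ with corners relating the algebraic cusp count to $\Phi(f_{0})-\Phi(f_{1})$, and you have not supplied one. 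The paper avoids this by passing to $V^{n}$ (odd-dimensional), where every circle in $S(F|_{V})$ automatically carries an even number of cusps and the obstruction becomes the purely combinatorial condition (\ref{equation summand odd dimension}). Likewise, your injectivity plan (prescribe $\Sigma\subset W$ and build $F$ around it) is plausible but substantially harder than the paper's route, which first manufactures $G\colon V\to\mathbb{R}^{2}$ satisfying \Cref{proposition cusps and vector fields}(ii), then extends over $W$ via \Cref{proposition extension of G} and \Cref{proposition generic extension}.
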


We finish this section with some remarks on our result.

\begin{remark}[orientations]\label{remark orientation}
It is straightforward to define the unoriented cusp cobordism group $\widetilde{\mathcal{C}}_{n}$ by forgetting about orientations of manifolds in the definition of $\mathcal{C}_{n}$.
As it turns out, the two groups $\mathcal{C}_{n}$ and $\widetilde{\mathcal{C}}_{n}$ are isomorphic because the arguments used in this paper to prove \Cref{main theorem} and \Cref{proposition admissible cobordism group} do not exploit orientations of manifolds anywhere.
Therefore, it suffices to focus on the group $\mathcal{C}_{n}$.
\end{remark}

\begin{remark}[comparison with admissible cobordism]
Our notion of cusp cobordism (see \Cref{definition cusp cobordism}) differs from the notion of admissible cobordism due to Saeki and Yamamoto (see \Cref{definition admissible cobordism}) in that we work with generic maps into the plane without imposing any $C^{\infty}$ stability requirements on the mappings.
The methods used in \cite{sy1, sy3} exploit the $C^{\infty}$ stability assumptions to compute the unoriented admissible cobordism group $b \mathfrak{N}_{2}$ of Morse functions on compact surfaces possibly with boundary.
We show in \Cref{proposition admissible cobordism group} that the relations of cusp cobordism and admissible cobordism result in isomorphic cobordism groups.
In particular, for Morse functions on unoriented surfaces our results yield an explicit description of the cobordism invariant $b \mathfrak{N}_{2} \rightarrow \mathbb{Z}/2$ which has been constructed by Saeki and Yamamoto in Corollary 4.9(1) of \cite{sy1}.
Namely, Saeki and Yamamoto identify the Morse function shown in \Cref{figure morse generator example} as a generator of $b \mathfrak{N}_{2}$.
This is in fact consistent with evaluation under the isomorphisms provided by \Cref{main theorem} and \Cref{proposition admissible cobordism group}.
What is more, our results yield the computation of (un-)oriented admissible cobordism groups in arbitrary dimension, thus answering Problem 6.1 and Problem 6.2 (compare \Cref{remark orientation}) in \cite{sy3}.
\end{remark}

\begin{remark}[lower dimensions]\label{remark lower dimension}
In Section 5 of \cite{sy3}, Saeki and Yamamoto define admissible cobordism relations for Morse functions on manifolds of dimension $0$ and $1$, and show that the resulting unoriented cobordism groups are explicitly given by $b \mathfrak{N}_{0} = 0$ and $b \mathfrak{N}_{1} \cong \mathbb{Z}$.
It is clear how to adapt the results of Section 5 of \cite{sy3} to the setting of oriented manifolds, which allows us to define oriented admissible cobordism groups $b \mathfrak{M}_{0}$ and $b \mathfrak{M}_{1}$ (compare Problem 6.2 in \cite{sy3}), and to show by essentially the same proofs that $b \mathfrak{M}_{0} = 0$ and $b \mathfrak{M}_{1} \cong \mathbb{Z}$.
For $i = 0, 1$ we define the oriented and unoriented cusp cobordism groups $\mathcal{C}_{i}$ and $\widetilde{\mathcal{C}}_{i}$ by omitting the $C^{\infty}$ stability assumptions made in the definitions of $b \mathfrak{M}_{i}$ and $b \mathfrak{N}_{i}$, respectively.
As the $C^{\infty}$ stability assumptions are included only for formal reasons in Section 5 of \cite{sy3} without affecting the proofs, we conclude that $b \mathfrak{M}_{i} \cong \mathcal{C}_{i}$ and $b \mathfrak{N}_{i} \cong \widetilde{\mathcal{C}}_{i}$ for $i = 0, 1$.
\end{remark}

\begin{remark}[non-singular Morse functions]\label{remark non-singular Morse functions}
Let us discuss a connection of our cobordism viewpoint with the classical problem of extending a given Morse function $\partial M \rightarrow \mathbb{R}$ defined on the boundary of a compact orientable $n$-manifold $M$ to a Morse function $M \rightarrow \mathbb{R}$ without critical points.
In the 1970s, the cases $n=2$ and $n=3$ of the problem were studied by Blank-Laudenbach \cite{bl} and Curley \cite{curl}, respectively.
Later, Barannikov \cite{bar} and Seigneur \cite{sei} derived necessary algebraic conditions for the existence of an extension in terms of the Morse complex when $n$ is arbitrary and $M^{n} = S^{n}$.
It can be shown directly that every Morse function $f \colon M \rightarrow \mathbb{R}$ without critical points represents the trivial element of $\mathcal{C}_{n}$.
Namely, a nullcobordism $F \colon W \rightarrow \mathbb{R}^{2}$ of $f$ can be obtained by restricting the map $\operatorname{id}_{[0, 1]} \times f \colon [0, 1] \times M \rightarrow \mathbb{R}^{2}$ to a codimension $0$ submanifold $W \subset [0, 1] \times M$ with boundary $\partial W = (\{0\} \times M) \cup_{\partial M} \iota(M)$ and corners along $\partial (\{0\} \times M) = \partial M = \partial (\iota(M))$, where $\iota \colon M \rightarrow [0, 1] \times M$ is an embedding such that $\iota(t, x) = (t, x)$ on some collar $[0, \varepsilon) \times \partial M \subset M$, and the composition $(\operatorname{id}_{[0, 1]} \times f) \circ \iota|_{M \setminus \partial M}$ is generic.
Consequently, as a necessary condition for the extendability of a non-singular $\partial M$-germ $[g \colon [0, \varepsilon) \times \partial M \rightarrow \mathbb{R}]$ to a non-singular Morse function $M \rightarrow \mathbb{R}$, we recover the Morse equalities $\chi_{+}[g] \equiv \chi(M) \; (\operatorname{mod} 2)$ (for $n$ even) and $\chi_{+}[g] = \chi(M)$ (for $n$ odd) due to Morse and van Schaack \cite[Theorem 10]{ms}.
\end{remark}

\begin{figure}[htbp]
\centering
\fbox{\begin{tikzpicture}
    \draw (0, 0) node {\includegraphics[height=0.4\textwidth]{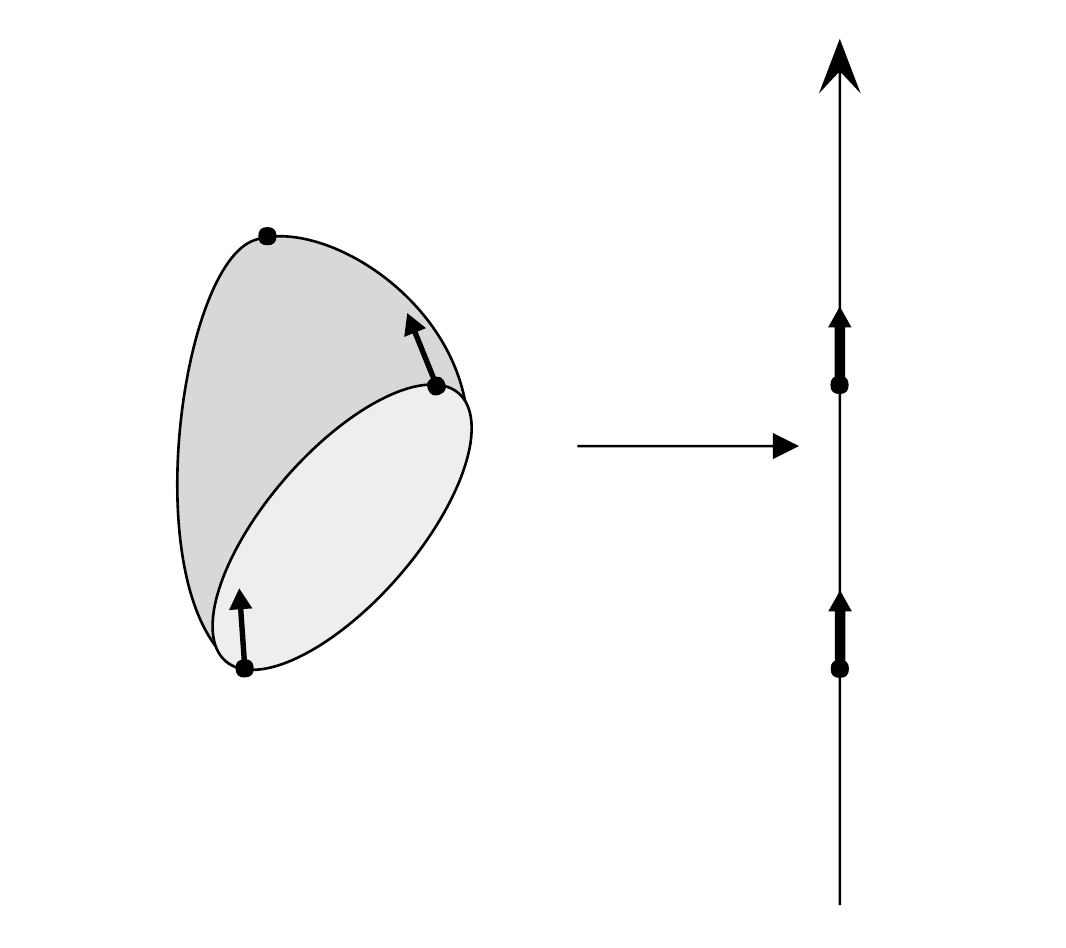}};
    \draw (2.1, 2.2) node {$\mathbb{R}$};
    \draw (2.5, 0.65) node {$df_{x_{1}}(v_{1})$};
    \draw (-1.5, -1.3) node {$x_{0}$};
    \draw (0.8, 0.4) node {$f$};
    \draw (-0.05, 0.4) node {$x_{1}$};
    \draw (2.5, -0.9) node {$df_{x_{0}}(v_{0})$};
    \draw (-2.2, 0.7) node {$D^{2}$};
\end{tikzpicture}}
\caption{Illustration of a Morse function $f \colon D^{2} \rightarrow \mathbb{R}$ induced by the height function on a $2$-disk embedded in $\mathbb{R}^{3}$.
The critical points of $f|_{S^{1}}$ are $x_{0}$ and $x_{1}$.
Using the indicated inward pointing tangent vectors $v_{0} \in T_{x_{0}}D^{2}$ and $v_{1} \in T_{x_{1}}D^{2}$, we see that $\sigma_{f}(x_{0}) = +1$ and $\sigma_{f}(x_{1}) = +1$.
Hence, $S_{+}^{0}[f] = \{x_{0}\}$, $S_{+}^{1}[f] = \{x_{1}\}$, and $\chi_{+}[f] = 0$.}
\label{figure morse generator example}
\end{figure}

\section{Preliminaries on generic maps into the plane}\label{Preliminaries on generic maps into the plane}

The purpose of this section is to provide the necessary background on generic maps into the plane.
We explain Levine's cusp elimination technique (see \Cref{Elimination of cusps}) and the complementary process of creating pairs of cusps along fold lines (see \Cref{creation of cusps}).
These techniques are combined in \Cref{local modifications} to prove some useful lemmas for modifying generic maps locally.
In \Cref{generic extensions}, we discuss extendability of generic maps.

Let us first define generic maps into the plane by describing the occurring types of $C^{\infty}$ stable singular map germs explicitly (compare e.g. the introduction in \cite{lev}).

\begin{definition}[generic maps]\label{definition generic maps into the plane}
Let $G \colon X \rightarrow \mathbb{R}^{2}$ be a map defined on a manifold $X$ (without boundary) of dimension $n \geq 2$.
We call $G$ \emph{generic} if for every critical point $p \in S(G)$ of $G$ there exist coordinate charts centered at $p$ and $G(p)$, respectively, in which $G$ takes one of the following normal forms:
\begin{align}\label{definition fold and cusp}
(x_{1}, \dots, x_{n}) \mapsto \begin{cases}
(x_{1}, \pm x_{2}^{2} \pm \dots \pm x_{n}^{2}), \quad &\text{i.e., $p$ is a \emph{fold point} of $G$}, \\
(x_{1},x_{1}x_{2}+x_{2}^{3} \pm x_{3}^{2} \pm \dots \pm x_{n}^{2}), \quad &\text{i.e., $p$ is a \emph{cusp} of $G$}.
\end{cases}
\end{align}
Following \cite{lev} (see also Section 3 of \cite{ike}), we define the notion of an index for fold points and cusps of $G$ as follows.
If $p$ is a fold point of $G$, then the \emph{absolute index} of $p$ is defined as $\tau(p) = \operatorname{max}\{\lambda, n-1-\lambda\}$, where $\lambda$ denotes the number of minus signs that appear in the standard quadratic form $\pm x_{2}^{2} \pm \dots \pm x_{n}^{2}$ of fold points in (\ref{definition fold and cusp}).
If $p$ is a cusp of $G$, then the \emph{absolute index} of $p$ is defined as $\tau(p) = \operatorname{max}\{\lambda, n-2-\lambda\}$, where $\lambda$ denotes the number of minus signs that appear in the standard quadratic form $\pm x_{3}^{2} \pm \dots \pm x_{n}^{2}$ of cusps in (\ref{definition fold and cusp}).
In both cases, it turns out that $\tau(p)$ does not depend on the choice of coordinate charts around $p$ and $G(p)$.
\end{definition}

It is well-known that for a generic map $G \colon X \rightarrow \mathbb{R}^{2}$ as in the previous definition, the singular locus $S(G) \subset X$ is a submanifold of dimension $1$ which is closed as a subset, and the cusps of $G$ form a discrete subset of $X$.
Moreover, $G$ restricts to an immersion on the \emph{fold lines}, i.e., the components of the $1$-dimensional locus of fold points of $G$.
The absolute index is known to be constant along fold lines.

\begin{remark}[behavior of index]\label{remark index}
As stated in Lemma (3.2)(2) in \cite[p. 274]{lev}, the absolute index of fold points varies as follows in a neighborhood of a cusp $p$ of a generic map $G \colon X \rightarrow \mathbb{R}^{2}$.
Let $C$ denote the component of $S(G)$ which contains $p$.
If $n$ is even and $\tau(p) = \frac{n}{2} - 1$, then the two fold lines abutting $p$ on $C$ have absolute index $\frac{n}{2}$.
If $\tau(p) \neq \frac{n}{2} - 1$, then the two fold lines abutting $p$ on $C$ have absolute indices $\tau(p)$ and $\tau(p)+1$.
\end{remark}

\subsection{Elimination of cusps}\label{Elimination of cusps}
In this section, we review Levine's cusp elimination technique \cite{lev} (see also Section 3 in \cite{ike}) that forms a basis of our approach.

Consider a generic map $G \colon X \rightarrow \mathbb{R}^{2}$ on an $n$-manifold $X$ (without boundary).
Using the standard orientation on $\mathbb{R}^{2}$, Levine \cite{lev} assigns to every cusp $p$ of $G$ a \emph{(non-reduced) index} $I(p) \in \{0, \dots, n-2\}$, which is related to the absolute index by $\tau(p) = \operatorname{max}\{I(p), n-2-I(p)\}$, and can be characterized via normal forms as follows.

\begin{lemma}[index of cusps]\label{cusp orientation preserving}
Let $p$ be a cusp of the generic map $G \colon X^{n} \rightarrow \mathbb{R}^{2}$.
Then, there exist charts $\phi \colon U \rightarrow U' \subset \mathbb{R}^{n}$ centered at $p \in U$ and $\psi \colon V \rightarrow V' \subset \mathbb{R}^{2}$ centered at $G(p) \in V$ such that $G(U) \subset V$, $\psi$ is an \emph{orientation preserving} diffeomorphism, and the composition $\psi \circ G \circ \phi^{-1} \colon U' \rightarrow V'$ has the form
\begin{displaymath}
(\psi \circ G \circ \phi^{-1})(x_{1}, \dots, x_{n}) = (x_{1}, x_{1}x_{2} + x_{2}^{3} -\sum_{i = 1}^{k}x_{i+2}^{2} + \sum_{i=k+1}^{n-2}x_{i+2}^{2})
\end{displaymath}
for some $k \in \{0, \dots, n-2\}$.
In this situation, we have $I(p) = k$.
If, instead, $\psi$ is orientation reversing, then $I(p) = n-2-k$.
\end{lemma}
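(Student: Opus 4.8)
The plan is to obtain the asserted normal form from the definition of a cusp by purely elementary coordinate changes, and then to identify the exponent $k$ with Levine's non-reduced index $I(p)$, which is essentially done by appealing to the way $I(p)$ is defined in \cite{lev}. To begin, by \Cref{definition generic maps into the plane} there are charts $\phi_{0}$ centered at $p$ and $\psi_{0}$ centered at $G(p)$, with $G$ mapping the domain of $\phi_{0}$ into that of $\psi_{0}$, in which $\psi_{0} \circ G \circ \phi_{0}^{-1}$ takes the form $(x_{1}, \, x_{1}x_{2} + x_{2}^{3} + \varepsilon_{3}x_{3}^{2} + \dots + \varepsilon_{n}x_{n}^{2})$ with $\varepsilon_{i} \in \{\pm 1\}$. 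Write $\lambda = \#\{i \in \{3, \dots, n\} : \varepsilon_{i} = -1\}$. Reindexing $x_{3}, \dots, x_{n}$ by a permutation, which changes only the source chart and leaves $\psi_{0}$ untouched, we may assume that all minus signs occur first, so the map becomes $(x_{1}, \, x_{1}x_{2} + x_{2}^{3} - \sum_{i=1}^{\lambda} x_{i+2}^{2} + \sum_{i=\lambda+1}^{n-2} x_{i+2}^{2})$.

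Now distinguish two cases. If $\psi_{0}$ is orientation preserving, keep $\psi = \psi_{0}$ and $\phi = \phi_{0}$ (after the reindexing), shrinking the chart domains if necessary; this is the required normal form with $k = \lambda$. If $\psi_{0}$ is orientation reversing, replace $\psi_{0}$ by $\psi = \rho \circ \psi_{0}$ where $\rho(u, v) = (u, -v)$, and simultaneously replace the source coordinate $x_{2}$ by $-x_{2}$; since $-x_{1}(-x_{2}) - (-x_{2})^{3} = x_{1}x_{2} + x_{2}^{3}$, a direct substitution shows that the map again acquires the standard cusp form, now with $n - 2 - \lambda$ minus signs in the quadratic part, and after a further reindexing of $x_{3}, \dots, x_{n}$ we obtain the asserted normal form with $k = n - 2 - \lambda$; moreover $\psi = \rho \circ \psi_{0}$ is orientation preserving because $\rho$ and $\psi_{0}$ are both orientation reversing. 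This establishes the existence of a normal form with orientation preserving target chart.

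To finish, recall that $I(p)$ is \emph{defined} in \cite{lev}, with respect to the standard orientation of $\mathbb{R}^{2}$, to be the exponent $k$ occurring in precisely such a normal form, and that \cite{lev} establishes that this $k$ is independent of the chosen orientation preserving charts $\phi, \psi$. Granting this, the first case above gives $I(p) = \lambda = k$; and if one is instead handed charts with $\psi$ orientation reversing realizing the standard form with exponent $k$, then applying the reflection trick of the second case converts these into an orientation preserving normal form with exponent $n - 2 - k$, whence $I(p) = n - 2 - k$. The relation $\tau(p) = \max\{I(p), n - 2 - I(p)\}$ mentioned before the lemma then follows by comparison with the definition of the absolute index in \Cref{definition generic maps into the plane}, since $\lambda$ and $n - 2 - \lambda$ are exactly the two attainable numbers of minus signs.

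The main obstacle is the well-definedness of $k$, i.e.\ the fact that $k$ and $n - 2 - k$ cannot both be realized by an orientation preserving target chart; this is the substantive content of Levine's construction and is precisely why the orientation of $\mathbb{R}^{2}$ must be fixed. A self-contained argument would require tracking how the leading part $x_{1}x_{2} + x_{2}^{3}$ of the second component transforms under an arbitrary source diffeomorphism germ together with an orientation preserving target diffeomorphism germ, and checking that the induced change of the quadratic form $-\sum_{i \le k} x_{i+2}^{2} + \sum_{i > k} x_{i+2}^{2}$ is by a linear isomorphism of $\mathbb{R}^{n-2}$ preserving its signature --- a finite but delicate jet computation that we would rather import from \cite{lev}.
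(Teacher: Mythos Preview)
Your proof is correct and follows essentially the same route as the paper's: start from the cusp normal form of \Cref{definition generic maps into the plane}, and if the target chart is orientation reversing, post-compose it with $(a,b)\mapsto(a,-b)$ while simultaneously substituting $x_{2}\mapsto -x_{2}$ in the source; then defer to Levine \cite{lev} for the well-definedness of $I(p)$. The paper's argument is more terse (it does not spell out the permutation of $x_{3},\dots,x_{n}$ or the explicit sign computation), but the two proofs are the same in substance.
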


\begin{proof}
The desired charts exist by \Cref{definition generic maps into the plane}, where we possibly have to compose $\phi$ with $(x_{1}, x_{2}, \dots, x_{n}) \mapsto (x_{1}, -x_{2}, \dots, x_{n})$, and $\psi$ with $(a, b) \mapsto (a, -b)$.

The claims about the index are discussed after the definition of matching pairs in Section (4.3) in \cite[p. 284]{lev}.
\end{proof}

\begin{definition}\label{definition matching pair}
A pair of cusps $(p, p')$ of $G$ is called \emph{matching pair} if
\begin{displaymath}
I(p) + I(p') = n-2.
\end{displaymath}
A matching pair $(p, p')$ of $G$ is called a \emph{removable pair} (see Definition (4.5) in \cite[p. 285]{lev}) if there exists a joining curve for $(p, p')$, which is defined in Section (4.4) in \cite[p. 285]{lev} as an embedding $\lambda \colon [0, 1] \rightarrow X$ with $\lambda(0) = p$, $\lambda(1) = p'$, and $\lambda^{-1}(S(G)) = \{p, p'\}$, such that the composition $G \circ \lambda$ is an immersion that follows the direction of the cusps at the endpoints of $[0, 1]$ as indicated in \Cref{figure levine cusp elimination}.
\end{definition}

Note that, if $n \geq 3$ and $X$ is connected, then any matching pair of cusps of $G$ is automatically a removable pair by Lemma (1) in Section (4.4) in \cite[p. 285]{lev}.

The following is Levine's theorem on elimination of cusps (see \cite[p. 286ff]{lev}).

\begin{theorem}[Levine \cite{lev}]\label{proposition elimination of cusps}
Every matching pair $(p, p')$ of a generic map $G_{0} \colon X \rightarrow \mathbb{R}^{2}$ that is also removable can be eliminated (see \Cref{figure levine cusp elimination}).
More precisely, if $\lambda \colon [0, 1] \rightarrow X$ is a joining curve for $(p, p')$ as depicted in \Cref{figure levine cusp elimination}(a), then the process of cusp elimination modifies $G_{0}$ only in a prescribed neighborhood of $\lambda([0, 1]) \subset X$ (compare the lemma in Section (4.9) in \cite[p. 293]{lev}).
The image $G(S(G))$ of the singular set $S(G)$ of the modified generic map $G \colon X \rightarrow \mathbb{R}^{2}$ is indicated in \Cref{figure levine cusp elimination}(b).
\begin{figure}[htbp]
\centering
\fbox{\begin{tikzpicture}
    \draw (0, 0) node {\includegraphics[width=0.8\textwidth]{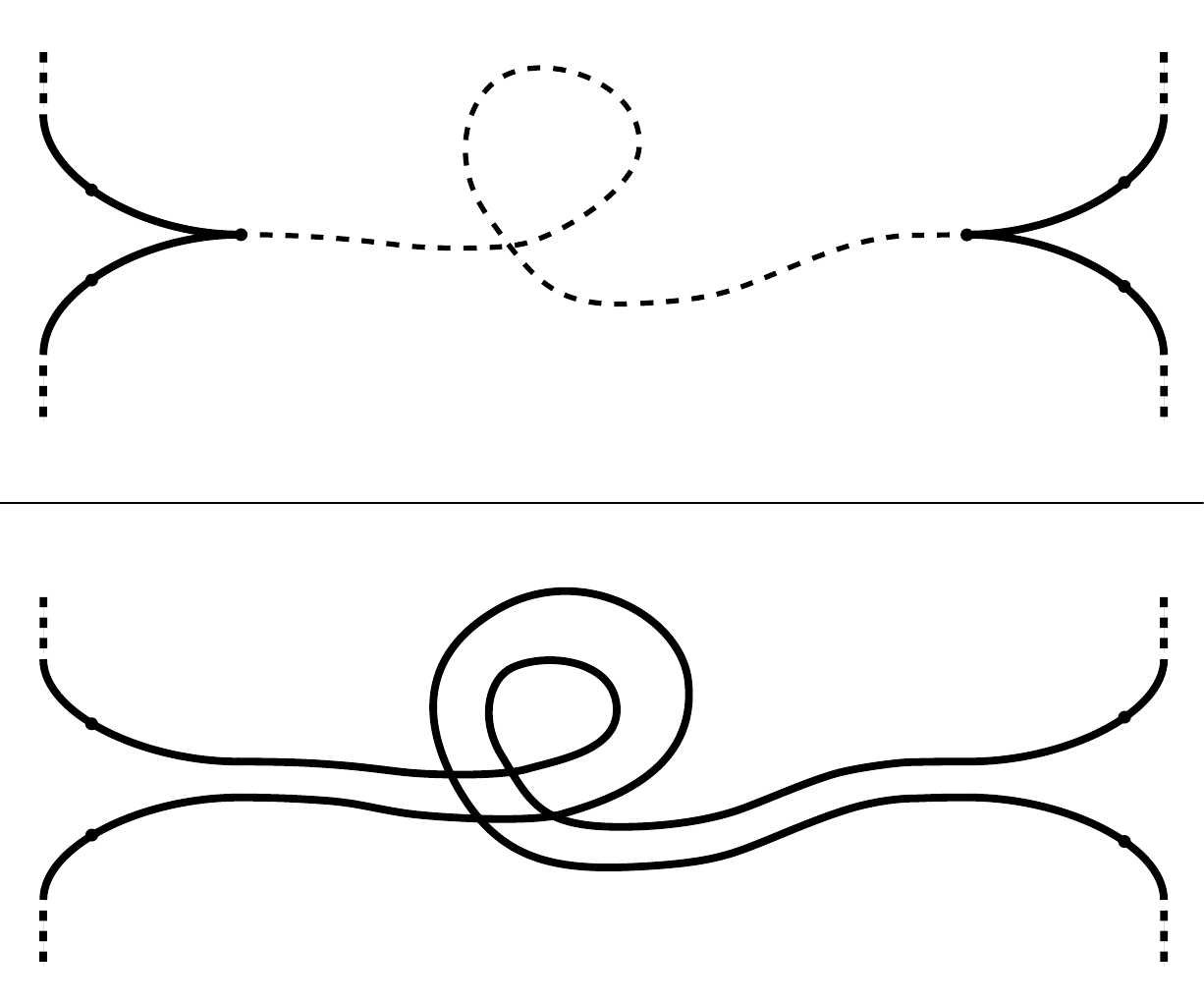}};
    \draw (-4.9, 4.2) node {$(a)$};
    \draw (-2.7, 1.9) node {$G_{0}(p)$};
    \draw (2.8, 1.9) node {$G_{0}(p')$};
    \draw (1.4, 3.0) node {$G_{0}(\lambda([0, 1]))$};
    \draw (0.2, 0.9) node {$\mathbb{R}^{2}$};
    \draw (-3.8, 2.9) node {$G_{0}(p_{1})$};
    \draw (-3.8, 1.6) node {$G_{0}(p_{2})$};
    \draw (3.8, 2.9) node {$G_{0}(p_{1}')$};
    \draw (3.8, 1.6) node {$G_{0}(p_{2}')$};

    \draw (-4.9, -0.4) node {$(b)$};
    \draw (0.2, -3.8) node {$\mathbb{R}^{2}$};
    \draw (-3.8, -1.7) node {$G(p_{1})$};
    \draw (-3.8, -3.0) node {$G(p_{2})$};
    \draw (3.8, -1.7) node {$G(p_{1}')$};
    \draw (3.8, -3.0) node {$G(p_{2}')$};
\end{tikzpicture}}
\caption{(a) A matching pair $(p, p')$ of $G_{0} \colon X^{n} \rightarrow \mathbb{R}^{2}$ connected by a joining curve $\lambda \colon [0, 1] \rightarrow X$.
(b) After elimination of $(p, p')$ by a homotopy supported in a small neighborhood of $\lambda([0, 1])$, the fold points $p_{1}, p_{2}$ near $p$, and $p_{1}', p_{2}'$ near $p'$ are connected by the fold lines of the modified generic map $G$ as indicated.}
\label{figure levine cusp elimination}
\end{figure}
\end{theorem}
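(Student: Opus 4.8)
This is Levine's theorem on elimination of cusps, and the plan is to follow the strategy of \cite[Sections (4.4)--(4.9)]{lev}: reduce the assertion to a single explicit local model supported in a tubular neighborhood of the joining curve. Concretely, one fixes a tubular neighborhood $U \subset X$ of $\lambda([0,1])$ and a neighborhood $V' \subset \mathbb{R}^2$ of the immersed arc $G_0(\lambda([0,1]))$ with $G_0(U) \subset V'$, replaces $G_0|_U$ by a cusp-free modification agreeing with $G_0$ near $\partial U$, and sets $G = G_0$ on $X \setminus U$. Because the modification is the identity near $\partial U$ and $G_0$ is already generic away from $U$, the resulting map $G \colon X \to \mathbb{R}^2$ is a well-defined smooth generic map on all of $X$, and it is supported in the prescribed neighborhood $U$; all the content is in producing the local modification and checking that the modified singular image is as in \Cref{figure levine cusp elimination}(b).

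For the local model, write $k = I(p)$, so that $I(p') = n - 2 - k$ by the matching condition. On a ball $B^n \subset \mathbb{R}^n$ with coordinates $(x_1, \dots, x_n)$ consider the map $g_0(x) = (x_1,\, h_0(x_1, x_2) + q(x_3, \dots, x_n))$, where $q(x_3, \dots, x_n) = -x_3^2 - \dots - x_{k+2}^2 + x_{k+3}^2 + \dots + x_n^2$ is the non-degenerate quadratic form of signature dictated by $k$, and $x_1 \mapsto h_0(x_1, \cdot)$ is a standard one-parameter section of the cusp (``$A_3$'') catastrophe exhibiting two cusps $q, q'$ of the planar map $(x_1,x_2) \mapsto (x_1, h_0(x_1,x_2))$ together with a standard joining arc $\sigma$ for $(q, q')$ in its regular region. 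Since $q$ is non-degenerate, the singular set of $g_0$ is the product of the singular set of the planar map with $\{0\} \subset \mathbb{R}^{n-2}$, so $g_0$ has exactly the two cusps $q, q'$, of indices $k$ and $n-2-k$ by \Cref{cusp orientation preserving}, and the suspension of $\sigma$ is a joining curve for them; this is precisely the local picture of \Cref{figure levine cusp elimination}(a). Now deform $h_0$ through the standard \emph{beak-to-beak} transition of the cusp catastrophe to a cusp-free family $h_1$, keeping $h_s = h_0$ near $\partial B^n$ for all $s \in [0,1]$; suspending by $q$ gives a homotopy $g_s(x) = (x_1, h_s(x_1, x_2) + q(x_3, \dots, x_n))$ from $g_0$ to a generic map $g_1 \colon B^n \to \mathbb{R}^2$ whose only singularities are fold points lying along the two cuspless arcs of \Cref{figure levine cusp elimination}(b), with $g_1 = g_0$ near $\partial B^n$. (When $n = 2$ the factor $q$ is empty and $g_0 = h_0$ itself.)

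It remains to normalize: one must produce diffeomorphisms $\Phi \colon U \stackrel{\cong}{\longrightarrow} B^n$ and $\Psi \colon V' \stackrel{\cong}{\longrightarrow} \Psi(V') \subset \mathbb{R}^2$ (shrinking $U, V'$ if necessary) with $\Psi \circ G_0|_U \circ \Phi^{-1} = g_0$ and $\Phi \circ \lambda$ equal to the suspension of $\sigma$. Granting this, the map $G := G_0$ on $X \setminus U$ and $G := \Psi^{-1} \circ g_1 \circ \Phi$ on $U$ has all the asserted properties, and its singular image is the one shown in \Cref{figure levine cusp elimination}(b). The normalization rests on the normal forms of \Cref{cusp orientation preserving} at $p$ and $p'$: after fixing one orientation of $V'$, the natural cusp charts at $p$ and $p'$ have \emph{opposite} orientation characters, because the joining curve enters the two cuspidal regions so that the cusps ``face each other'' along $G_0 \circ \lambda$; hence by \Cref{cusp orientation preserving} together with $I(p) + I(p') = n-2$ the two cusp normal forms carry the \emph{same} quadratic part $q$ with exactly $k$ minus signs, which therefore extends over the whole joining arc. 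Combining this with \Cref{remark index} (for the behavior of the absolute index near each cusp) and a parametrized Morse lemma to straighten $G_0$ along $\lambda$ yields $\Phi$ and $\Psi$. I expect this normalization step --- essentially the uniqueness, up to equivalence, of the germ of a generic map along a removable joining curve between a matching pair --- to be the main obstacle; the careful bookkeeping of orientations that makes the two cusp normal forms compatible, and the application of the parametrized Morse lemma along the arc, are the delicate points, and they are carried out in detail in Sections (4.4)--(4.9) of \cite{lev}.
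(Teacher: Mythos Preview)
The paper does not supply its own proof of this theorem: it is stated as Levine's result with a reference to \cite[p.~286ff]{lev} and then used as a black box. Your proposal is a reasonable high-level sketch of Levine's original argument (reduction to a local model along the joining curve, the beak-to-beak transition in the $(x_1,x_2)$-plane suspended by the quadratic form $q$, and the normalization step carried out in \cite[Sections (4.4)--(4.9)]{lev}), so there is nothing to compare against in the paper itself; your outline is consistent with what Levine does and with how the paper intends the result to be invoked.
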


\subsection{Creation of cusps}\label{creation of cusps}
Given a generic map $G \colon X^{n} \rightarrow \mathbb{R}^{2}$ on an $n$-manifold $X$ (without boundary) of dimension $n \geq 2$, we discuss the process of creating a pair of cusps on a given fold line of $G$.
The local model for this modification is provided by the swallow's tail homotopy (compare Exercise (3) of Section VII.\S 3 in \cite[p. 176]{gol}).

\begin{proposition}\label{proposition creation of cusps}
Given $i \in \{0, \dots, n-2\}$ and an open neighborhood $N \subset \mathbb{R}^{n}$ of the origin $0 \in \mathbb{R}^{n}$, there exist a generic map $F \colon \mathbb{R}^{n} \rightarrow \mathbb{R}^{2}$, a compact subset $L \subset N$, and an embedding $\xi \colon \mathbb{R} \rightarrow \mathbb{R}^{n}$ with the following properties:
\begin{enumerate}[(i)]
\item The generic map $F$ agrees on $\mathbb{R}^{n} \setminus L$ with the normal form of fold points,
$$
F(x_{1}, \dots, x_{n}) = (x_{1}, -x_{2}^{2} - \dots - x_{i+1}^{2} + x_{i+2}^{2} + \dots + x_{n}^{2}).
$$
\item The singular set of $F$ is given by the image of $\xi$, $S(F) = \xi(\mathbb{R})$.
\item If $|s| < 1$, then $\xi(s)$ is a fold point of $F$ of absolute index $\operatorname{max}\{i+1, n-2-i\}$.
\item The par $(\xi(-1), \xi(1))$ is a matching pair of cusps of $F$ (see \Cref{definition matching pair}).
\item If $|s| > 1$, then $\xi(s)$ is a fold point of $F$ of absolute index $\operatorname{max}\{i, n-1-i\}$.
\end{enumerate}
\end{proposition}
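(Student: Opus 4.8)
The plan is to obtain $F$, $L$, and $\xi$ from an explicit local model: a suspended two-variable \emph{swallow's tail homotopy} (compare Exercise~(3) of Section~VII.\S3 in \cite[p.~176]{gol}). The first observation is that it suffices to use a single ``active'' direction, say $x_n$. In the normal form of (i) the variable $x_n$ carries a plus sign, so that normal form can be rewritten as $(x_1, \dots, x_n) \mapsto \bigl(x_1,\ x_n^{2} + Q(x_2, \dots, x_{n-1})\bigr)$, where $Q(x_2, \dots, x_{n-1}) = -x_2^{2} - \dots - x_{i+1}^{2} + x_{i+2}^{2} + \dots + x_{n-1}^{2}$ is a nondegenerate quadratic form with exactly $i$ minus signs. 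I would keep $Q$ fixed and modify only the $(x_1, x_n)$-part, replacing $x_n^{2}$ inside a compact subset of $\mathbb{R}^{2}$ by a function $\eta(x_1, x_n)$ that still equals $x_n^{2}$ outside that set and has the following features: the curve $\{\partial_{x_n}\eta = 0\}$ is a single properly embedded copy of $\mathbb{R}$, coinciding with $\{x_n = 0\}$ near infinity; along it $\partial_{x_n}^{2}\eta$ vanishes transversally at exactly two points, is positive on the two unbounded sub-arcs and negative on the bounded one in between; and $\partial_{x_n}^{3}\eta \neq 0$ and $\partial_{x_1 x_n}\eta \ne 0$ at those two points. Such an $\eta$ is precisely the endpoint of a swallow's tail homotopy starting at $(x_1, x_n) \mapsto (x_1, x_n^{2})$, and I would produce it either by an explicit formula---the curve $\{\partial_{x_n}\eta=0\}$ being, near the origin, a graph $x_1 = q(x_n)$ with $q$ a cubic having two non-degenerate critical points---or by quoting the standard analysis of the swallowtail from \cite{gol, lev}.

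Granting such a model, put $F(x_1, \dots, x_n) = \bigl(x_1,\ \eta(x_1, x_n) + Q(x_2, \dots, x_{n-1})\bigr)$. Since $F$ has first coordinate $x_1$ and $Q$ is nondegenerate, its singular set is $S(F) = \{x \in \mathbb{R}^n : x_2 = \dots = x_{n-1} = 0,\ \partial_{x_n}\eta(x_1, x_n) = 0\}$, a single properly embedded line; taking for $\xi$ a parametrization of it normalized so that the two zeros of $\partial_{x_n}^{2}\eta$ lie at $\xi(\pm 1)$ yields (ii). Along $S(F)$, the Hessian of the second coordinate of $F$ in the directions $x_2, \dots, x_n$ is the diagonal form $Q \oplus (\partial_{x_n}^{2}\eta)$, which is nondegenerate away from $\xi(\pm 1)$ and of corank one at $\xi(\pm 1)$; combined with $\partial_{x_n}^{3}\eta \ne 0$ and $\partial_{x_1 x_n}\eta \ne 0$ there, the normal forms of \Cref{definition generic maps into the plane} are readily verified, exhibiting $\xi(\pm 1)$ as cusps. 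For $|s| > 1$ this Hessian has $i$ minus signs, hence absolute index $\max\{i, n-1-i\}$, which is (v); for $|s| < 1$ it has $i+1$, hence absolute index $\max\{i+1, n-2-i\}$, which is (iii). Outside the support of the modification, $F$ agrees with the normal form of (i); as that support is compact in the $(x_1, x_n)$-plane but a priori unbounded in the remaining directions, I would cut the modification off in $x_2, \dots, x_{n-1}$ by a bump function supported near the (compact) set $S(F)$, choosing the coordinate scales small enough that this creates no new singular points and that the resulting compact support $L$ lies inside the given neighbourhood $N$.

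Property (iv) I would deduce from a symmetry of the model: the homotopy may be arranged so that $\eta$ is invariant under $(x_1, x_n) \mapsto (-x_1, -x_n)$, whence $F$ is equivariant with respect to the source involution $(x_1, x_n, x') \mapsto (-x_1, -x_n, x')$ and the \emph{orientation-reversing} target involution $(a, b) \mapsto (-a, b)$, the source involution interchanging the two cusps. Choosing at one cusp charts realizing the normal form of \Cref{cusp orientation preserving} with an orientation-preserving target chart gives index $k$ there; composing the source chart with the source involution and the target chart with the target involution realizes, at the other cusp, the very same normal form, but now through an orientation-reversing target chart, so by \Cref{cusp orientation preserving} its index equals $n-2-k$. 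The two indices therefore sum to $n-2$, i.e.\ $(\xi(-1), \xi(1))$ is a matching pair, which is (iv). (For $n \ge 3$ it is then automatically removable, although this is not needed here.)

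The main obstacle is the first step: verifying that the two-variable swallow's tail model exists with all the stated features \emph{and} with compact support, i.e.\ that both cut-offs---the one restoring $x_n^{2}$ outside a compact set of $\mathbb{R}^2$ and the one localizing the modification in the remaining coordinates---can be installed without introducing any singular points beyond the prescribed fold line and the two non-degenerate cusps. This is a careful but essentially routine adaptation of the standard treatment of the swallowtail singularity; the quadratic suspension, the index count, and the symmetry argument for the matching pair are then straightforward.
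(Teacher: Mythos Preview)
Your proposal is correct and rests on the same swallow's tail model as the paper, but the execution differs in a useful way. The paper's proof is largely a citation: it introduces the homotopy $H_t(u,x,z) = (u, x^4/12 - t x^2/2 + ux + Q(z))$, observes that $H_{-1}$ is a fold map, puts $H_{-1}$ in the required normal form via charts, and then quotes Lemma~4.7.1 and Proposition~4.7.3 of \cite{wra} both for the analysis of the singular set (including the matching pair property) and for the compactly supported deformation from $H_{-1}$ to a map with two cusps; the result is then transported back through the charts. You instead work directly with the normal form, suspending a two-variable modification $\eta(x_1,x_n)$ by the fixed quadratic form $Q$, and carry out the singular-set and Hessian analysis by hand; this is more self-contained and avoids the detour through $H_{-1}$ and the chart identifications. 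Your symmetry argument for (iv)---using the $(x_1,x_n)\mapsto(-x_1,-x_n)$ invariance of the model together with \Cref{cusp orientation preserving}---is a genuinely different (and rather elegant) way to establish the matching pair condition, which the paper simply imports from \cite{wra}. The trade-off is that you must handle the two cut-offs (restoring $x_n^2$ outside a compact $(x_1,x_n)$-region, and localizing in $x_2,\dots,x_{n-1}$) explicitly; you correctly identify this as the delicate step, and the scaling argument you indicate is the standard way to push it through.
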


\begin{proof}
We consider the swallow's tail homotopy
\begin{align*}
H_{t} \colon \mathbb{R}^{n} = \mathbb{R} \times \mathbb{R} \times \mathbb{R}^{n-2} \rightarrow \mathbb{R}^{2}, \qquad t \in \mathbb{R}, \\
p = (u, x, z) \mapsto (u, \frac{x^{4}}{12} - t \frac{x^{2}}{2} + ux + Q(z)) = (u, h_{t}(p)),
\end{align*}
where $Q(z) = - z_{1}^{2} - \dots -z_{i}^{2} + z_{i+1}^{2} + \dots + z_{n-2}^{2}$ denotes the standard quadratic form of index $i$ in $n-2$ variables $z = (z_{1}, \dots, z_{n-2})$.
According to Lemma 4.7.1 in \cite[p. 110]{wra}, the singular set of $H_{t}$, $t \in \mathbb{R}$, is given by the image of the embedding
\begin{displaymath}
\varphi_{t} \colon \mathbb{R} \rightarrow \mathbb{R}^{n} = \mathbb{R} \times \mathbb{R} \times \mathbb{R}^{n-2}, \qquad \varphi_{t}(x) = (-\frac{x^{3}}{3} + tx, x, 0).
\end{displaymath}
Furthermore, if $t < 0$, then the singular set $S(H_{t}) = \varphi_{t}(\mathbb{R})$ is a fold line of $H_{t}$ which has absolute index $\operatorname{max}\{i, n-1-i\}$.
If $t > 0$, then $(\varphi_{t}(-\sqrt{t}), \varphi_{t}(\sqrt{t}))$ is a matching pair of cusps of $H_{t}$ (see \Cref{definition matching pair}), the points $\varphi_{t}(x)$ for $|x| > \sqrt{t}$ are fold points of $H_{t}$ of absolute index $\operatorname{max}\{i, n-1-i\}$, and the points $\varphi_{t}(x)$ for $|x| < \sqrt{t}$ are fold points of $H_{t}$ of absolute index $\operatorname{max}\{i+1, n-2-i\}$.

As the origin $\varphi_{-1}(0) = 0 \in \mathbb{R}^{n}$ is a fold point of absolute index $\operatorname{max}\{i, n-1-i\}$ of the fold map $H_{-1}$, there exist a chart $\alpha \colon U \rightarrow U' \subset \mathbb{R}^{n}$ centered at $\varphi_{-1}(0) = 0 \in U$ and a chart $\beta \colon V \rightarrow V' \subset \mathbb{R}^{2}$ centered at $H_{-1}(0) = 0 \in V$ such that $H_{-1}(U) \subset V$, and for all $(x_{1}, \dots, x_{n}) \in U'$ we have
$$
(\beta \circ H_{-1} \circ \alpha^{-1})(x_{1}, \dots, x_{n}) = (x_{1}, -x_{2}^{2} - \dots - x_{i+1}^{2} + x_{i+2}^{2} + \dots + x_{n}^{2}).
$$
Applying Proposition 4.7.3 in \cite[p. 111]{wra} to the open neighborhood $\alpha^{-1}(N \cap U')$ of the origin $0 \in \mathbb{R}^{n}$, we obtain a generic map $G \colon \mathbb{R}^{n} \rightarrow \mathbb{R}^{2}$, a compact subset $K \subset \alpha^{-1}(N \cap U')$ and an embedding $\varphi \colon \mathbb{R} \rightarrow \mathbb{R}^{n}$ with the following properties:
\begin{enumerate}[(i)]
\item For all $p \in \mathbb{R}^{n} \setminus K$, we have $G(p) = H_{-1}(p)$.
\item The singular set of $G$ is given by the image of $\varphi$, $S(G) = \varphi(\mathbb{R})$.
\item If $|s| < 1$, then $\varphi(s)$ is a fold point of $G$ of absolute index $\operatorname{max}\{i+1, n-2-i\}$.
\item The pair $(\varphi(-1), \varphi(1))$ is a matching pair of cusps of $G$.
\item If $|s| > 1$, then $\varphi(s)$ is a fold point of $G$ of absolute index $\operatorname{max}\{i, n-1-i\}$.
\end{enumerate}
Note that $L = \alpha(K)$ is a compact subset of $N$.
We define the generic map $F \colon \mathbb{R}^{n} \rightarrow \mathbb{R}^{2}$ by $F(x_{1}, \dots, x_{n}) = (\beta \circ G \circ \alpha^{-1})(x_{1}, \dots, x_{n})$ for all $(x_{1}, \dots, x_{n}) \in U'$ and
$$
F(x_{1}, \dots, x_{n}) = (x_{1}, -x_{2}^{2} - \dots - x_{i+1}^{2} + x_{i+2}^{2} + \dots + x_{n}^{2})
$$
for all $(x_{1}, \dots, x_{n}) \in \mathbb{R}^{n} \setminus L$.
Then, by construction, $S(F) \setminus L = \{(x_{1}, 0, \dots, 0) \in \mathbb{R}^{n}\} \setminus L$ and $S(F) \cap U' = \alpha(S(G) \cap U)$, and thus $S(F) \cong \mathbb{R}$.
Since $\alpha$ and $\beta$ are diffeomorphisms, the claimed properties (ii) to (v) are valid for a suitable diffeomorphism $\xi \colon \mathbb{R} \rightarrow S(F)$.
\end{proof}

\subsection{Local modifications}\label{local modifications}

We apply the techniques of elimination and creation of cusps from \Cref{Elimination of cusps} and \Cref{creation of cusps} to derive some specific local modifications of generic maps that will be used in the proofs of \Cref{proposition condition n even} and \Cref{proposition condition n odd}.

\begin{lemma}\label{lemma cusp number change in even dimensions}
Let $n \geq 2$ be an even integer.
Let $G_{0} \colon X^{n} \rightarrow \mathbb{R}^{2}$ be a generic map on an $n$-manifold $X$ (without boundary).
Suppose that $C_{0}$ is a component of $S(G_{0})$ that contains a finite number of cusps of $G_{0}$.
Let $U \subset X$ be an open subset such that $U \cap C_{0} \neq \emptyset$.
Then, there exist a generic map $G \colon X \rightarrow \mathbb{R}^{2}$ and a component $C$ of $S(G)$ with the following properties.
We have $G|_{X \setminus K} = G_{0}|_{X \setminus K}$ and $C \setminus K = C_{0} \setminus K$ for some compact subset $K \subset U$, and $C$ contains a finite number of cusps of $G$ that has not the same parity as the number of cusps of $G_{0}$ lying on $C_{0}$.
\begin{figure}[htbp]
\centering
\fbox{\begin{tikzpicture}
    \draw (0, 0) node {\includegraphics[height=0.58\textwidth]{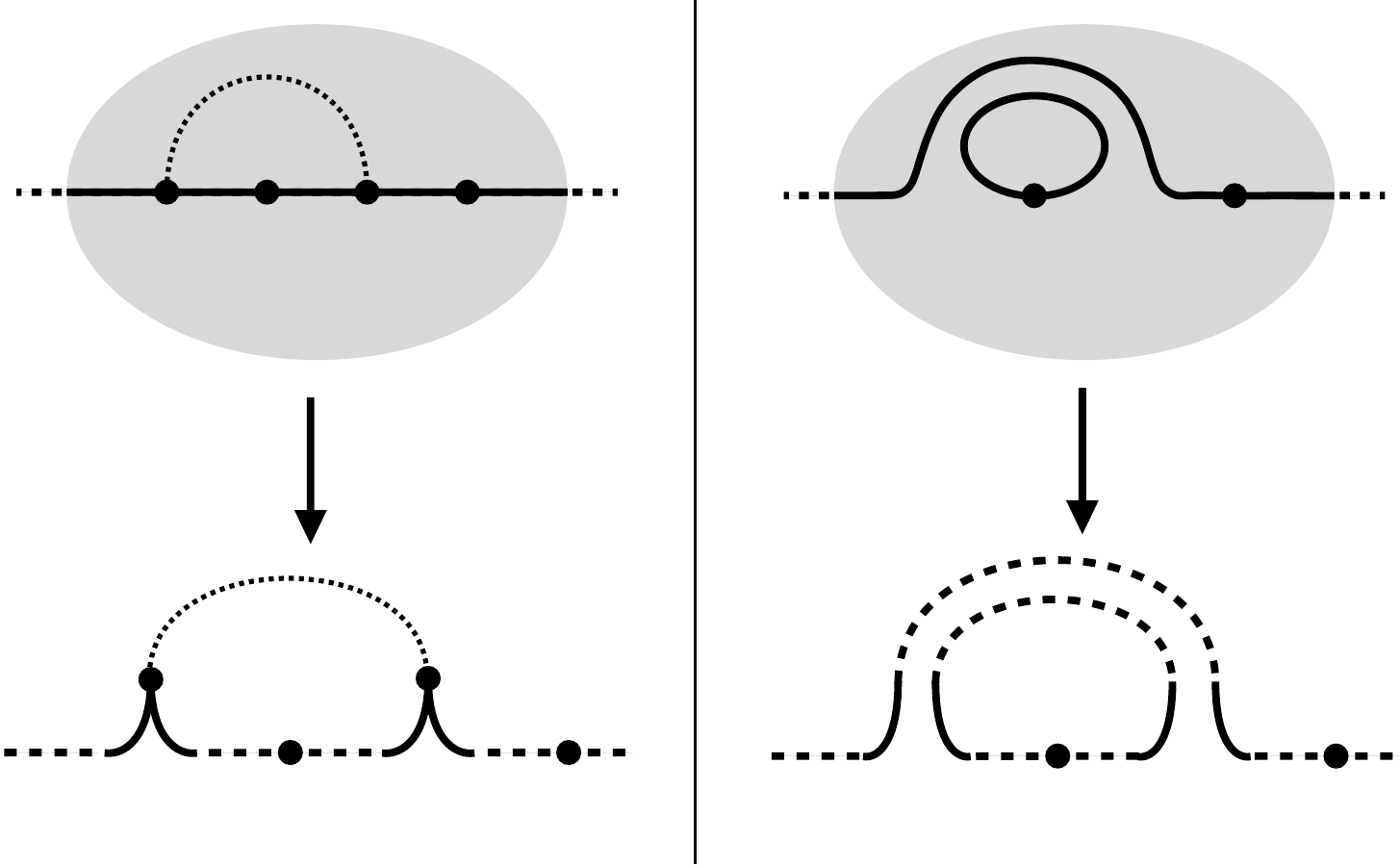}};
    \draw (-5.7, 3.4) node {$(a)$};
    \draw (-0.7, 2.4) node {$C_{2}$};
    \draw (-2.8, 3.0) node {$\lambda$};
    \draw (-4.5, 1.7) node {$c_{0}$};
    \draw (-2.8, 1.7) node {$c_{0}'$};
    \draw (-3.65, 1.7) node {$c_{1}$};
    \draw (-1.95, 1.7) node {$c_{1}'$};
    \draw (-1.3, 1.0) node {$U$};
    \draw (-3.7, -0.2) node {$G_{2}$};
    \draw (-4.7, -1.2) node {$G_{2} \circ \lambda$};
    \draw (-5.35, -2.1) node {$G_{2}(c_{0})$};
    \draw (-1.55, -2.1) node {$G_{2}(c_{0}')$};
    \draw (-5.3, -3.1) node {$G_{2} \circ \alpha$};
    \draw (-3.3, -3.0) node {$G_{2}(c_{1})$};
    \draw (-1.2, -3.0) node {$G_{2}(c_{1}')$};
    \draw (-1.9, -1.2) node {$\mathbb{R}^{2}$};

    \draw (0.45, 3.4) node {$(b)$};
    \draw (5.75, 2.4) node {$C$};
    \draw (5.15, 1.0) node {$U$};
    \draw (2.8, 1.7) node {$c$};
    \draw (4.5, 1.7) node {$c'$};
    \draw (2.9, -0.2) node {$G$};
    \draw (3.15, -3.05) node {$G(c)$};
    \draw (5.25, -3.05) node {$G(c')$};
    \draw (4.7, -1.2) node {$\mathbb{R}^{2}$};
\end{tikzpicture}}
\caption{Local modifications of the generic map $G_{0}$ on $U \subset X$.
(a) Along a fold line of $G_{1}|_{U}$ of absolute index $n/2$, we introduce the cusp pairs $(c_{0}, c_{1})$ and $(c_{0}', c_{1}')$ having absolute indices $(n-2)/2$.
(b) We eliminate the matching pair $(c_{0}, c_{0}')$ of cusps of $G_{2}$ by means of a joining curve $\lambda$ to obtain the desired generic map $G$.}
\label{figure cusp elimination}
\end{figure}
\end{lemma}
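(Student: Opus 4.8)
The plan is to modify $G_{0}$ in three successive stages, each supported in an arbitrarily small set contained in $U$, while monitoring the component of the singular set passing through $U$. Since every stage alters only a compact subset of $U$, the total modification will be supported in a single compact set $K \subset U$, and $C$ will be obtained as the component of $S(G)$ that agrees with $C_{0}$ off $K$. In the first stage I would normalise the index. Choose a fold point of $G_{0}$ in $U \cap C_{0}$ (this exists since the cusps on $C_{0}$ are discrete), and recall that for $n$ even its absolute index $\tau_{0}$ satisfies $\tau_{0} \geq n/2$, because the absolute index of a fold point is $\max\{\lambda, n-1-\lambda\}$. If $\tau_{0} > n/2$, apply \Cref{proposition creation of cusps} with $i = n-1-\tau_{0}$ to create a pair of cusps with a segment of absolute index $\tau_{0}-1$ between them, and iterate $\tau_{0}-n/2$ times on ever shorter segments to obtain a generic map $G_{1}$ possessing, inside $U$, a fold line of absolute index $n/2$ on the component $C_{1}$ through $U$. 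Since each creation adds exactly two cusps to that component, the number of cusps of $G_{1}$ on $C_{1}$ has the same parity as the number of cusps of $G_{0}$ on $C_{0}$.

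In the second stage I would apply \Cref{proposition creation of cusps} twice, with $i = n/2-1$, on this index-$n/2$ fold line of $G_{1}$, producing two consecutive pairs of cusps $(c_{0}, c_{1})$ and $(c_{0}', c_{1}')$ ordered as $c_{0}, c_{1}, c_{0}', c_{1}'$ along the fold line. The choice $i = n/2-1$ forces each of the four cusps to have non-reduced index $n/2-1$, hence to form a matching pair with each of the others in the sense of \Cref{definition matching pair}, while by \Cref{remark index} the fold line retains absolute index $n/2$ throughout. Call the resulting map $G_{2}$ and the relevant component $C_{2}$, which carries the cusps of $C_{1}$ together with $c_{0}, c_{1}, c_{0}', c_{1}'$. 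In the third stage I eliminate the matching pair $(c_{0}, c_{0}')$ by a joining curve $\lambda \colon [0,1] \to X$ from $c_{0}$ to $c_{0}'$ whose image lies in $U$ and runs over the intervening cusp $c_{1}$, as in \Cref{figure cusp elimination}(a); such a joining curve exists by the observation following \Cref{definition matching pair} when $n \geq 3$ (applied to the component of $X$ containing $C_{1}$), and by a direct construction inside $U$ when $n = 2$. By \Cref{proposition elimination of cusps} the resulting generic map $G$ agrees with $G_{2}$ outside a prescribed neighbourhood of $\lambda([0,1])$, and the fold legs at $c_{0}$ and $c_{0}'$ reconnect as in \Cref{figure levine cusp elimination}(b).

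Because $\lambda$ runs over $c_{1}$, the reconnection joins the two fold segments $[c_{0}, c_{1}]$ and $[c_{1}, c_{0}']$ to one another, so that they close up into a circle carrying only the cusp $c_{1}$, while the two remaining legs at $c_{0}$ and $c_{0}'$ reconnect into a single component $C$ carrying every other cusp of $G_{2}$ on $C_{2}$, namely the cusps of $C_{1}$ together with $c_{1}'$. Letting $K \subset U$ be the union of the compact sets arising in the three stages and the closure of the neighbourhood of $\lambda([0,1])$, we then have $G|_{X \setminus K} = G_{0}|_{X \setminus K}$ and $C \setminus K = C_{0} \setminus K$; moreover $C$ carries finitely many cusps, one more than the number of cusps of $G_{1}$ on $C_{1}$, and hence of parity opposite to the number of cusps of $G_{0}$ on $C_{0}$, as required.

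The step I expect to be the main obstacle is the verification of this reconnection pattern in the third stage: one must extract from Levine's explicit elimination procedure (the lemma of Section (4.9) in \cite{lev}, illustrated by \Cref{figure levine cusp elimination}(b)) that placing the joining curve on the $c_{1}$-side of $c_{0}$ and $c_{0}'$ makes precisely the two inner fold legs join up -- isolating $c_{1}$ on a new circle -- rather than the outer ones, which would alter the cusp count only by an even amount and hence not change parity. Ensuring that a joining curve with the prescribed endpoint behaviour can indeed be drawn over $c_{1}$ (immediate for $n \geq 3$, where one pushes off the one-dimensional set $S(G_{2})$, but requiring a short local argument for surfaces) is a secondary point, as is arranging that $K$ is small enough that $C_{0} \setminus K \neq \emptyset$ and that $C$ is the unique component of $S(G)$ meeting $C_{0} \setminus K$.
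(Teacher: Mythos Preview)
Your proposal is correct and follows essentially the same three-stage strategy as the paper's proof: first iterate \Cref{proposition creation of cusps} to produce a fold arc of absolute index $n/2$ inside $U$ (preserving parity), then create two further swallowtail pairs of cusps of absolute index $(n-2)/2$ along it, and finally eliminate the matching pair $(c_{0},c_{0}')$ via \Cref{proposition elimination of cusps}. Your explicit description of the outcome of the elimination---a small circle carrying only $c_{1}$ splitting off, leaving $C$ with the original cusps together with $c_{1}'$---is precisely what the paper encodes in \Cref{figure cusp elimination}(b), and your identification of this reconnection pattern as the point requiring care matches the paper's reliance on the figure at that step.
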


\begin{proof}
Applying \Cref{proposition creation of cusps} iteratively, we modify $G_{0}|_{U}$ by creating several new pairs of cusps along $U \cap C_{0}$ in order to obtain a modified generic map $G_{1} \colon X^{n} \rightarrow \mathbb{R}^{2}$ and a component $C_{1}$ of $S(G_{1})$ with the following properties.
We have $G_{1}|_{X \setminus K_{1}} = G_{0}|_{X \setminus K_{1}}$ and $C_{1} \setminus K_{1} = C_{0} \setminus K_{1}$ for some compact subset $K_{1} \subset U$, the intersection $U \cap C_{1}$ contains fold points of $G_{1}$ of absolute index $n/2$ (where we note that $n \geq 2$ is even), and $C_{1}$ contains a finite number of cusps of $G_{1}$ that has the same parity as the number of cusps of $G_{0}$ lying on $C_{0}$.

Next, we apply \Cref{proposition creation of cusps} twice to create two pairs $(c_{0}, c_{1})$ and $(c_{0}', c_{1}')$ of cusps having absolute indices $(n-2)/2$ along a fold line of $G_{1}|_{U}$ of absolute index $n/2$ in such a way that we obtain a modified generic map $G_{2} \colon X^{n} \rightarrow \mathbb{R}^{2}$ and a component $C_{2}$ of $S(G_{2})$ with the following properties (see \Cref{figure cusp elimination}(a)).
We have $G_{2}|_{X \setminus K_{2}} = G_{1}|_{X \setminus K_{2}}$ and $C_{2} \setminus K_{2} = C_{1} \setminus K_{2}$ for some compact subset $K_{2} \subset U$, and there exists an embedding $\alpha \colon (0, 1) \rightarrow U \cap C_{2}$ such that
\begin{itemize}
\item the number of cusps of $G_{2}$ lying on $C_{2} \setminus \alpha((0, 1))$ equals the number of cusps of $G_{1}$ lying on $C_{1}$,
\item there are $4$ cusps of $G_{2}$ that lie on $\alpha((0, 1))$ with each having absolute index $(n-2)/2$, namely $c_{0} = \alpha(t_{0})$, $c_{1} = \alpha(t_{1})$, $c_{0}' = \alpha(t_{0}')$, $c_{1}' = \alpha(t_{1}')$ for some real numbers $0 < t_{0} < t_{0}' < t_{1} < t_{1}' < 1$, and
\item the composition $G_{2} \circ \alpha \colon (0, 1) \rightarrow \mathbb{R}^{2}$ looks as depicted in \Cref{figure cusp elimination}(a).
\end{itemize}

Finally, we apply \Cref{proposition elimination of cusps} to $G_{2}|_{U}$ by eliminating the matching pair $(c_{0}, c_{0}')$.
(Note that $(c_{0}, c_{0}')$ is in fact removable because a joining curve $\lambda$ for $(c_{0}, c_{0}')$ exists in a tubular neighborhood of $\alpha((0, 1)) \subset U$ whenever $n>2$, and also in the case $n=2$ as can be seen from \Cref{figure cusp elimination}(a).)
As can be seen from \Cref{figure cusp elimination}(b), we obtain a modified generic map $G \colon X \rightarrow \mathbb{R}^{2}$ and a component $C$ of $S(G)$ with the following properties.
We have $G|_{X \setminus K_{3}} = G_{2}|_{X \setminus K_{3}}$ and $C \setminus K_{3} = C_{2} \setminus K_{2}$ for some compact subset $K_{3} \subset U$, and $C$ contains a finite number of cusps of $G$ that has not the same parity as the number of cusps of $G_{2}$ lying on $C_{2}$.

All in all, it follows that $G$ and $C$ have the desired properties.
\end{proof}

\begin{lemma}\label{lemma surgery of singular components}
Let $n > 2$ be an odd integer.
Let $G_{0} \colon X^{n} \rightarrow \mathbb{R}^{2}$ be a generic map on an $n$-manifold $X$ (without boundary).
Suppose that $C_{0}^{(0)}$ and $C_{0}^{(1)}$ are two distinct components of $S(G_{0})$, and
let $U \subset X$ be a connected open subset such that $U \cap S(G_{0}) \subset C_{0}^{(0)} \cup C_{0}^{(1)}$, and $U \cap C_{0}^{(0)} \neq \emptyset$, $U \cap C_{0}^{(1)} \neq \emptyset$.
Given points $x^{(0)} \in C_{0}^{(0)} \setminus U$ and $x^{(1)} \in C_{0}^{(1)} \setminus U$, there exist a generic map $G \colon X \rightarrow \mathbb{R}^{2}$ such that $G|_{X \setminus K} = G_{0}|_{X \setminus K}$ for some compact subset $K \subset X$, and the points $x^{(0)}$ and $x^{(1)}$ lie on a common component $C$ of $S(G)$.
\begin{figure}[htbp]
\centering
\fbox{\begin{tikzpicture}
    \draw (0, 0) node {\includegraphics[width=0.8\textwidth]{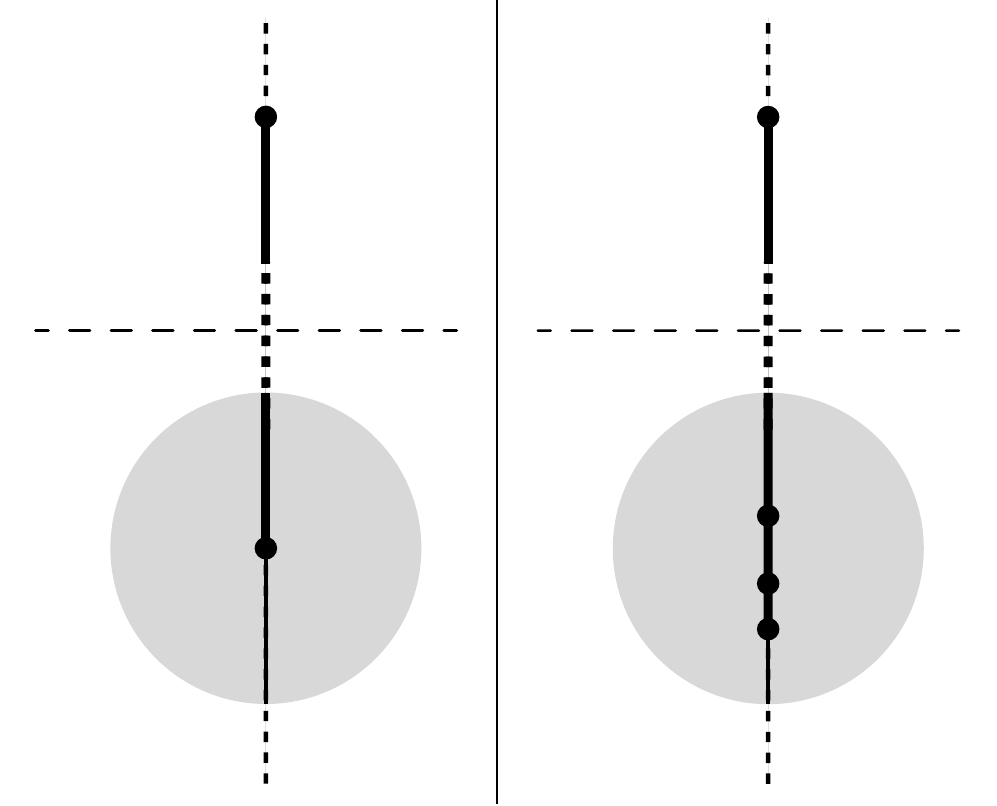}};
    \draw (-4.5, 3.7) node {$(a)$};
    \draw (0.7, 3.7) node {$(b)$};

    \draw (-2.75, -3.55) node {$C_{1}^{(j)}$};
    \draw (2.35, -3.55) node {$C_{2}^{(j)}$};

    \draw (-3.7, -2.9) node {$V^{(j)}$};
    \draw (1.45, -2.9) node {$V^{(j)}$};

    \draw (-1.6, -1.45) node {$\alpha^{(j)}(0)$};
    \draw (-1.1, 2.95) node {$x^{(j)} = \alpha^{(j)}(1)$};

    \draw (3.45, -2.3) node {$\beta^{(j)}(0)$};
    \draw (4.0, 2.95) node {$x^{(j)} = \beta^{(j)}(1)$};

    \draw (-4.1, 0.4) node {$U$};
    \draw (-4.2, 1.0) node {$X \setminus U$};

    \draw (1.0, 0.4) node {$U$};
    \draw (1.0, 1.0) node {$X \setminus U$};

    \draw (2.35, -1.8) node {$c^{(j)}_{0}$};
    \draw (2.35, -1.1) node {$c^{(j)}_{1}$};
\end{tikzpicture}}
\caption{(a) The embedding $\alpha^{(j)} \colon [0, 1] \rightarrow C_{1}^{(j)}$ is chosen to connect a fold point $\alpha^{(j)}(0) \in U$ of $G_{1}$ of absolute index $(n-1)/2$ with the point $\alpha^{(j)}(1) = x^{(j)} \in X \setminus U$.
(b) We create the cusps $(c_{0}^{(j)}, c_{1}^{(j)})$ near the fold point $\alpha^{(j)}(0)$ of $G_{1}|_{V^{(j)}}$ in such a way that $(c_{1}^{(0)}, c_{1}^{(1)})$ is a matching pair, and that the cusps $(c_{0}^{(j)}, c_{1}^{(j)})$ lie in the indicated order on the image of an embedding $\beta^{(j)} \colon [0, 1] \rightarrow C_{2}^{(j)}$ that connects a fold point $\beta^{(j)}(0) \in V^{(j)}$ of $G_{2}$ of absolute index $(n-1)/2$ with the point $\beta^{(j)}(1) = x^{(j)} \in X \setminus U$.}
\label{figure local component}
\end{figure}
\end{lemma}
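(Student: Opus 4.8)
The strategy is to fuse $C_{0}^{(0)}$ and $C_{0}^{(1)}$ into a single component by creating a pair of cusps on each, choosing one cusp from each pair so that the two chosen cusps form a matching pair, and eliminating that pair via Levine's technique (\Cref{proposition elimination of cusps}); the point is that eliminating a matching pair whose two cusps lie on distinct components of the singular set reconnects the fold lines abutting those cusps across the two components, hence merges them (cf.\ \Cref{figure levine cusp elimination}(b)). The crux is index bookkeeping: by definition a matching pair of cusps has both cusps of the same absolute index, with non-reduced indices summing to $n-2$, and arranging this across the two components is where the hypothesis that $n$ is odd comes in --- it guarantees the existence of the self-conjugate fold type of absolute index $\frac{n-1}{2}$.

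First I would normalize the two components near $U$. Since $n$ is odd, $\frac{n-1}{2}$ is an integer; it is the minimal absolute index of a fold point of a generic map $X^{n}\to\mathbb{R}^{2}$, and $\lambda=\frac{n-1}{2}$ is the unique fold type with $\lambda=n-1-\lambda$. Given a fold arc of $C_{0}^{(j)}$ meeting $U$ of absolute index $\tau>\frac{n-1}{2}$, applying \Cref{proposition creation of cusps} with parameter $i=n-1-\tau\le\frac{n-3}{2}$ (so that the arc appears with $i$ minus signs in the normal form of item (i), which can be arranged by a target chart change) produces, inside a compact subset of $U$, a pair of cusps between which the fold arc has absolute index $\tau-1$. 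Iterating this, and carrying out the modifications for $j=0$ and $j=1$ in disjoint neighborhoods of the respective components (possible since $C_{0}^{(0)}$ and $C_{0}^{(1)}$ are disjoint closed submanifolds), I obtain a generic map $G_{1}$ agreeing with $G_{0}$ off a compact subset $K_{1}\subset U$, whose components $C_{1}^{(0)},C_{1}^{(1)}$ (arising from $C_{0}^{(0)},C_{0}^{(1)}$, still distinct because creating cusps neither merges nor splits components) each contain a fold point of $G_{1}$ of absolute index $\frac{n-1}{2}$ lying in $U$. As $C_{1}^{(j)}$ is a connected $1$-manifold containing such a fold point in $U$ as well as $x^{(j)}\in C_{1}^{(j)}\setminus U$, I choose an embedded arc $\alpha^{(j)}\colon[0,1]\to C_{1}^{(j)}$ with $\alpha^{(j)}(0)\in U$ that fold point and $\alpha^{(j)}(1)=x^{(j)}$ (see \Cref{figure local component}(a)).

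Next, inside a small ball $V^{(j)}\subset U$ around $\alpha^{(j)}(0)$, with $V^{(0)}$ and $V^{(1)}$ disjoint and avoiding $x^{(0)},x^{(1)}$, I apply \Cref{proposition creation of cusps} with $i=\frac{n-1}{2}$ to create a matching pair of cusps $(c_{0}^{(j)},c_{1}^{(j)})$ of the resulting map $G_{2}$, each of absolute index $\frac{n-1}{2}$ (so that their non-reduced indices lie in $\{\frac{n-1}{2},\frac{n-3}{2}\}$), the fold arc of $C_{2}^{(j)}$ outside these two cusps still having absolute index $\frac{n-1}{2}$ (see \Cref{figure local component}(b)). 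By \Cref{cusp orientation preserving} I may choose the splicing charts --- reflecting the target if necessary, which does not change the $\frac{n-1}{2}$-fold normal form used to glue the model in, as that form has equally many plus and minus signs --- so that the selected cusps have prescribed non-reduced indices $I(c_{1}^{(0)})=\frac{n-1}{2}$ and $I(c_{1}^{(1)})=\frac{n-3}{2}$. Then $I(c_{1}^{(0)})+I(c_{1}^{(1)})=n-2$, so $(c_{1}^{(0)},c_{1}^{(1)})$ is a matching pair of cusps of $G_{2}$; note $c_{1}^{(0)},c_{1}^{(1)}\in U$ and $G_{2}=G_{1}$ off a compact subset $K_{2}\subset U$.

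Finally, since $n>2$ and $c_{1}^{(0)},c_{1}^{(1)}$ lie in the connected open set $U$, the matching pair $(c_{1}^{(0)},c_{1}^{(1)})$ is removable: a joining curve $\lambda$ from $c_{1}^{(0)}$ to $c_{1}^{(1)}$ can be built inside $U$ (an embedded arc in $U$ meeting $S(G_{2})$ only at its endpoints exists as $1+1<n$, and the immersion and cusp-direction conditions at the endpoints are then met by general position, $G_{2}$ being a submersion off $S(G_{2})$). Applying \Cref{proposition elimination of cusps} to eliminate $(c_{1}^{(0)},c_{1}^{(1)})$ along $\lambda$ modifies $G_{2}$ only near $\lambda([0,1])\subset U$, giving a generic map $G$ agreeing with $G_{0}$ off a compact subset $K=K_{1}\cup K_{2}\cup K_{3}\subset X$. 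Because $c_{1}^{(0)}$ and $c_{1}^{(1)}$ lay on the distinct components $C_{2}^{(0)}$ and $C_{2}^{(1)}$, the cusp-elimination move reconnects the fold lines abutting the eliminated cusps across these two components (\Cref{figure levine cusp elimination}(b)), so $C_{2}^{(0)}$ and $C_{2}^{(1)}$ merge into a single component $C$ of $S(G)$; since all modifications are supported in $U$ and $x^{(0)},x^{(1)}\notin U$, the points $x^{(0)}\in C_{2}^{(0)}$ and $x^{(1)}\in C_{2}^{(1)}$ are untouched and hence both lie on $C$. The main obstacle, as indicated, is the index bookkeeping making $(c_{1}^{(0)},c_{1}^{(1)})$ a genuine matching pair, which forces the preliminary reduction to the self-conjugate fold type of absolute index $\frac{n-1}{2}$ and is where the parity of $n$ is used.
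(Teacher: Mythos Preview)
Your proof is correct and follows essentially the same route as the paper's own argument: first create cusps along each $U\cap C_0^{(j)}$ to reach the self-conjugate fold index $(n-1)/2$, then create a further pair of cusps on each component and arrange (via a choice of chart orientation) that one cusp from each pair together form a matching pair, and finally eliminate that matching pair using \Cref{proposition elimination of cusps} inside the connected set $U$ (removability holding since $n>2$). The only cosmetic difference is that the paper achieves the matching condition by orienting the source chart so that the arc $\alpha^{(j)}$ departs in the direction $(-1)^j x_1\ge 0$, whereas you achieve it by reflecting the target chart; these are equivalent manoeuvres in view of \Cref{cusp orientation preserving}.
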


\begin{proof}
Applying \Cref{proposition creation of cusps} iteratively, we modify $G_{0}|_{U}$ by creating several new pairs of cusps along $U \cap C_{0}^{(0)}$ and $U \cap C_{0}^{(1)}$ in order to obtain a modified generic map $G_{1} \colon X^{n} \rightarrow \mathbb{R}^{2}$ and two distinct components $C_{1}^{(0)}$ and $C_{1}^{(1)}$ of $S(G_{1})$ with the following properties.
We have $G_{1}|_{X \setminus K_{1}} = G_{0}|_{X \setminus K_{1}}$ and $C_{1}^{(j)} \setminus K_{1} = C_{0}^{(j)} \setminus K_{1}$, $j = 0, 1$, for some compact subset $K_{1} \subset U$, and the intersections $U \cap C_{1}^{(j)}$, $j=0, 1$, contain fold points of $G_{1}$ of absolute index $(n-1)/2$ (where we note that $n > 2$ is odd).

As shown in \Cref{figure local component}(a), we choose embeddings $\alpha^{(j)} \colon [0, 1] \rightarrow C_{1}^{(j)}$, $j=0, 1$, such that $\alpha^{(j)}(0) \in U \cap C_{1}^{(j)}$ is a fold point of $G_{1}$ of absolute index $i = (n-1)/2$, and $\alpha^{(j)}(1) = x^{(j)}$.
For $j = 0, 1$, there exist charts $\varphi^{(j)} \colon V^{(j)} \rightarrow \overline{V}^{(j)} \subset \mathbb{R}^{n}$ centered at $\alpha^{(j)}(0) \in U$ and $\psi^{(j)} \colon W^{(j)} \rightarrow \overline{W}^{(j)} \subset \mathbb{R}^{2}$ centered at $G_{1}(\alpha^{(j)}(0)) \in \mathbb{R}^{2}$ such that $G_{1}(V^{(j)}) \subset W^{(j)}$, and the composition $\psi^{(j)} \circ G_{1} \circ (\varphi^{(j)})^{-1}$ satisfies
$$
(\psi^{(j)} \circ G_{1} \circ (\varphi^{(j)})^{-1})(x_{1}, \dots, x_{n}) = (x_{1}, -x_{2}^{2} - \dots - x_{i + 1}^{2} + x_{i + 2}^{2} + \dots + x_{n}^{2})
$$
for all $(x_{1}, \dots, x_{n}) \in \overline{V}^{(j)}$.
Moreover, we may assume without loss of generality that $\varphi^{(j)}(V^{(j)} \cap \alpha^{(j)}([0, 1])) = \{(x_{1}, 0, \dots, 0) \in \mathbb{R}^{n}; \; (-1)^{j} \cdot x_{1} \geq 0\} \cap \overline{V}$, and that $\psi^{(j)}$ is orientation preserving.
(If necessary, we compose $\psi^{(j)}$ with one of the automorphisms $(a, b) \mapsto (a, -b)$ and $(a, b) \mapsto (-a, b)$ and $(a, b) \mapsto (-a, -b)$ of $\mathbb{R}^{2}$.)

Next, we apply \Cref{proposition creation of cusps} for $i = (n-1)/2$ to the open subsets $\overline{V}^{(j)} \subset \mathbb{R}^{n}$, $j=0, 1$, to create a pair $(c_{0}^{(j)}, c_{1}^{(j)})$ of cusps near the fold point $\alpha^{(j)}(0)$ of $G_{1}|_{V^{(j)}}$ (where we may assume that $V^{(0)} \cap V^{(1)} = \emptyset$) in such a way that we obtain a modified generic map $G_{2} \colon X^{n} \rightarrow \mathbb{R}^{2}$ and two distinct components $C_{2}^{(0)}$ and $C_{2}^{(1)}$ of $S(G_{2})$ with the following properties (see \Cref{figure local component}(b)).
We have $G_{2}|_{X \setminus (V^{(0)} \cup V^{(1)})} = G_{1}|_{X \setminus (V^{(0)} \cup V^{(1)})}$ and $C_{2}^{(j)} \setminus V^{(j)} = C_{1}^{(j)} \setminus V^{(j)}$, $j = 0, 1$, and there exist embeddings $\beta^{(j)} \colon [0, 1] \rightarrow C_{2}^{(j)}$, $j=0, 1$, such that
\begin{itemize}
\item we have $\beta^{(j)}(0) \in V^{(j)}$ and $\beta^{(j)}(1) = x^{(j)}$,
\item there are $2$ cusps of $G_{2}$ that lie on $V^{(j)}$, namely $c_{0}^{(j)} = \beta^{(j)}(t_{0}^{(j)})$ and $c_{1}^{(j)} = \beta^{(j)}(t_{1}^{(j)})$ for some real numbers $0 < t_{0}^{(j)} < t_{1}^{(j)} < 1$, and for the absolute index we have $\tau(c_{0}^{(j)}) = \tau(c_{1}^{(j)}) = i = (n-1)/2$, and
\item the pair $(c_{1}^{(0)}, c_{1}^{(1)})$ is a matching pair in the sense of \Cref{definition matching pair}.
\end{itemize}

Finally, we apply \Cref{proposition elimination of cusps} to $G_{2}|_{U}$ by eliminating the matching pair $(c_{1}^{(0)}, c_{1}^{(1)})$.
(Note that $(c_{1}^{(0)}, c_{1}^{(1)})$ is in fact removable because $n > 2$, and $U$ is connected by assumption.)
Thus, we obtain a modified generic map $G \colon X \rightarrow \mathbb{R}^{2}$ and a component $C$ of $S(G)$ with the following properties.
We have $G|_{X \setminus K} = G_{2}|_{X \setminus K}$ for some compact subset $K \subset U$, the points $x^{(0)}$ and $x^{(1)}$ lie both on $C$.

All in all, it follows that $G$ and $C$ have the desired properties.
\end{proof}

\subsection{Generic extensions}\label{generic extensions}
It is a well-known fact that any map $X \rightarrow \mathbb{R}^{2}$ can be approximated arbitrarily well by a generic map as in \Cref{definition generic maps into the plane} (see \cite{lev}).
We will need the following extension property for generic maps.

\begin{proposition}\label{proposition generic extension}
Let $X$ be a manifold (without boundary).
Suppose that $G_{0} \colon X \setminus C \rightarrow \mathbb{R}^{2}$ is a generic map, where $C \subset X$ is compact.
Then, given an open neighborhood $U \subset X$ of $C$, there exists a generic map $G \colon X \rightarrow \mathbb{R}^{2}$ that extends $G_{0}|_{X \setminus U}$.
\end{proposition}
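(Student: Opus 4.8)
The plan is to reduce the statement to a standard jet-transversality argument by interpolating between $G_0$ and an arbitrary generic map on $X$ using a partition of unity, and then perturbing the result to be generic while keeping it fixed outside $U$. First I would shrink the neighborhood: since $C$ is compact, choose open sets $U_1, U_2$ with $C \subset U_1$, $\overline{U_1} \subset U_2$, $\overline{U_2} \subset U$, where $\overline{U_1}$ (hence $\overline{U_2}$, after a further shrinking) may be taken compact; pick a smooth function $\rho \colon X \to [0,1]$ with $\rho \equiv 1$ on a neighborhood of $\overline{U_1}$ and $\operatorname{supp}\rho \subset U_2$. Next, since $G_0$ is defined and generic on $X \setminus C \supset X \setminus U_1$, I would choose \emph{any} smooth map $\widehat{G} \colon X \to \mathbb{R}^2$ (for instance, take a generic map approximating $G_0$ on the open set $X \setminus \overline{U_1}$ and extending arbitrarily; even a constant map will do for the construction, since genericity will be arranged afterwards) and form the interpolation
\begin{displaymath}
G_1 = (1-\rho)\, G_0 + \rho\, \widehat{G} \colon X \longrightarrow \mathbb{R}^2,
\end{displaymath}
which is a well-defined smooth map on all of $X$ because on $\operatorname{supp}\rho \subset U_2 \setminus C$ both $G_0$ and $\widehat{G}$ are defined, and which coincides with $G_0$ on the open set $X \setminus U_2$ where $\rho \equiv 0$; in particular $G_1|_{X \setminus U}$ equals $G_0|_{X \setminus U}$.

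The remaining task is to replace $G_1$ by a generic map agreeing with it outside $U$. Here I would invoke the standard fact (Levine \cite{lev}; see the discussion at the start of \Cref{generic extensions}) that genericity of a map $X \to \mathbb{R}^2$ is an open and dense condition, realized by a jet-transversality condition on the $2$-jet extension with respect to the Thom-Boardman strata, and that such transversality can be achieved by a $C^\infty$-small perturbation supported in any prescribed open set on which transversality fails. Concretely: $G_1$ is already generic on the open set $X \setminus \overline{U_1}$ — on $X \setminus U_1$ it equals $G_0$, which is generic, and genericity is an open condition — so the $2$-jet of $G_1$ is already transverse to the relevant strata over $X \setminus \overline{U_1}$. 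By the relative version of the jet-transversality theorem, one perturbs $G_1$ only over the compact set $\overline{U_1}$, keeping the perturbation supported inside $U_1$, to obtain a map $G \colon X \to \mathbb{R}^2$ whose $2$-jet is transverse to all the strata everywhere; equivalently, $G$ is generic. Since the perturbation is supported in $U_1 \subset U$, we have $G|_{X \setminus U} = G_1|_{X \setminus U} = G_0|_{X \setminus U}$, as required.

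I expect the only delicate point to be the bookkeeping in the relative perturbation step: one must make sure the perturbation achieving transversality over $\overline{U_1}$ does not destroy transversality near $\partial U_1$, where $G_1$ is already transverse. This is handled in the usual way — choose the perturbation $C^\infty$-small enough and cut off with a bump function equal to $1$ on a slightly smaller compact set $\overline{U_0}$ with $C \subset U_0$, $\overline{U_0} \subset U_1$, using that transversality is open so a sufficiently small perturbation preserves it on the collar $U_1 \setminus \overline{U_0}$. All of this is entirely standard once the interpolation $G_1$ has been set up, so I would state it briefly and refer to \cite{lev} (and the jet-transversality theorem, e.g.\ in \cite{gol}) rather than reproduce the argument. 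No orientation considerations enter, consistent with \Cref{remark orientation}.
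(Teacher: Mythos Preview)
Your approach is essentially the same as the paper's, which simply observes that genericity is a jet-transversality condition and invokes a relative version of the Thom transversality theorem, citing \cite{gol} and referring to \cite{wra} for an explicit elaboration.

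There is, however, a bookkeeping slip in your write-up. You assert that $G_1$ is generic on $X \setminus \overline{U_1}$, but $G_1$ coincides with $G_0$ only where $\rho = 0$, that is, on $X \setminus U_2$; on the shell $U_2 \setminus \overline{U_1}$ the map $G_1$ is a genuine convex combination $(1-\rho)G_0 + \rho\widehat{G}$ and has no reason to be generic. Hence the relative perturbation must be carried out over the compact set $\overline{U_2}$ (or a slightly larger compact set still contained in $U$), not over $\overline{U_1}$; your ``delicate point'' about the collar near $\partial U_1$ is misplaced for the same reason. A related minor inaccuracy: $\operatorname{supp}\rho$ does \emph{not} lie in $U_2 \setminus C$ (it contains $C$, since $\rho \equiv 1$ there); the correct justification that $G_1$ is well-defined on all of $X$ is that $1-\rho$ vanishes on a neighborhood of $C$, so $(1-\rho)G_0$ extends smoothly by zero across $C$. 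With these corrections your argument is complete and matches the paper's.
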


\begin{proof}
We note that fold points and cusps can be characterized by means of transversality and jet spaces (see e.g. \cite[Chapter III, \S 4]{gol}).
Thus, the proof is a standard application of a relative version of the Thom transversality theorem.
The required transversality techniques are for instance provided in \cite[Chapter II, \S 4 and \S 5]{gol}.

For an explicit elaboration of the details, see Proposition 4.4.1 in \cite[p. 100]{wra}.
\end{proof}

\begin{corollary}\label{corollary generic maps}
Given a compact manifold possibly with boundary $Y$ and a Morse function $g \colon \partial Y \rightarrow \mathbb{R}$, there exists a map $G \colon Y \rightarrow \mathbb{R}^{2}$ such that $G|_{Y \setminus \partial Y}$ is generic, and $G|_{[0, \varepsilon) \times \partial Y} = \operatorname{id}_{[0, \varepsilon)} \times g$ in some collar neighborhood $[0, \varepsilon) \times \partial Y \subset Y$ of $\partial Y \subset Y$.
\end{corollary}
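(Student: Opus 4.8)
The plan is to deduce this directly from \Cref{proposition generic extension}, the key observation being that the prescribed boundary behaviour $\operatorname{id} \times g$ is already a generic map once one removes $\partial Y$. First I would invoke the collar neighbourhood theorem to fix an embedding $c \colon [0, 1) \times \partial Y \hookrightarrow Y$ onto an open neighbourhood of $\partial Y$ with $c(0, x) = x$, and consider the map $G_{1}$ defined on $c([0, 1) \times \partial Y)$ by $G_{1} \circ c = \operatorname{id}_{[0, 1)} \times g$, regarded as a map into $[0, 1) \times \mathbb{R} \subset \mathbb{R}^{2}$. The local computation to carry out is that $G_{1}$ restricted to the open collar $c((0, 1) \times \partial Y) \subset Y \setminus \partial Y$ is generic with only fold points in its singular set: at a point $c(t, x_{0})$ with $x_{0}$ a non-degenerate critical point of $g$ of index $\lambda$, choosing Morse coordinates $(x_{1}, \dots, x_{n-1})$ for $g$ around $x_{0}$ and using $(t, x_{1}, \dots, x_{n-1})$ as coordinates on the collar (and translating the second target coordinate), $G_{1}$ assumes the fold normal form of \Cref{definition generic maps into the plane} with the collar coordinate in the role of the first variable; at every other point of the open collar, $G_{1}$ is a submersion and hence non-singular.

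Next I would set $X = Y \setminus \partial Y$, a manifold without boundary. Fixing reals $0 < \varepsilon < \delta < 1$, the set $C := Y \setminus c([0, \delta) \times \partial Y)$ is closed in the compact space $Y$ and disjoint from $\partial Y$, hence is a compact subset of $X$, and $X \setminus C = c((0, \delta) \times \partial Y)$, on which $G_{0} := G_{1}|_{X \setminus C}$ is generic by the previous paragraph. With the open neighbourhood $U := Y \setminus c([0, \varepsilon] \times \partial Y)$ of $C$ in $X$ we have $X \setminus U = c((0, \varepsilon] \times \partial Y)$, and \Cref{proposition generic extension} then provides a generic map $G_{2} \colon X \rightarrow \mathbb{R}^{2}$ with $G_{2}|_{X \setminus U} = G_{0}|_{X \setminus U}$, that is, $G_{2} \circ c = \operatorname{id} \times g$ on $(0, \varepsilon] \times \partial Y$.

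Finally I would assemble $G \colon Y \rightarrow \mathbb{R}^{2}$ by setting $G = G_{2}$ on $Y \setminus \partial Y$ and $G \circ c = \operatorname{id}_{[0, \varepsilon)} \times g$ on the open collar $c([0, \varepsilon) \times \partial Y)$. These two prescriptions agree on the overlap $c((0, \varepsilon) \times \partial Y)$ by the previous step, and $G$ is smooth because $Y$ is covered by the two open sets $Y \setminus \partial Y$ and $c([0, \varepsilon) \times \partial Y)$ on each of which $G$ is manifestly smooth. Then $G|_{Y \setminus \partial Y} = G_{2}$ is generic, and in the collar $[0, \varepsilon) \times \partial Y$ (identified with $c([0, \varepsilon) \times \partial Y)$ via $c$) we have $G = \operatorname{id}_{[0, \varepsilon)} \times g$, as required. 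I do not anticipate a genuine obstacle here: the only two points requiring care are the local identification of $\operatorname{id} \times g$ with the fold normal form on the interior of the collar, and the passage from a map on $X = Y \setminus \partial Y$ (the output of \Cref{proposition generic extension}) to a map on all of $Y$, which works precisely because the extension $G_{2}$ already coincides near $\partial Y$ with the boundary germ $\operatorname{id} \times g$, and that germ extends smoothly over $\partial Y$.
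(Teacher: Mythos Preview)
Your proof is correct and follows essentially the same approach as the paper's: both choose a collar, define $\operatorname{id} \times g$ on it, and invoke \Cref{proposition generic extension} to extend to a map on $Y$ that is generic on the interior. You are simply more explicit than the paper in verifying the hypotheses of \Cref{proposition generic extension} (identifying $X = Y \setminus \partial Y$, the compact set $C$, and checking that $\operatorname{id} \times g$ is already generic on the open collar), whereas the paper compresses this into the phrase ``extend the $\{0\} \times \partial Y$-germ of $\operatorname{id}_{[0,\infty)} \times g$''.
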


\begin{proof}
Choose a collar neighborhood $[0, \infty) \times \partial Y \subset Y$ of $\partial Y \subset Y$.
By means of \Cref{proposition generic extension} we may then extend the $\{0\} \times \partial Y$-germ of the map $\operatorname{id}_{[0, \infty)} \times g \colon [0, \infty) \times \partial Y \rightarrow [0, \infty) \times \mathbb{R}$ to a map $G \colon Y \rightarrow \mathbb{R}^{2}$ such that $G|_{Y \setminus \partial Y}$ is generic.
\end{proof}

\section{Admissible extensions}\label{Non-singular extensions}
Let us adopt from \cite{sy3} the terminology that a map $M \rightarrow N$ between manifolds possibly with boundary is called \emph{admissible} if it is a submersion on a neighborhood of the boundary $\partial M$ of the source manifold $M$.
In particular, note that Morse functions $M \rightarrow \mathbb{R}$ are by definition admissible.

In this section, we study the extendability of generic maps $\partial M \rightarrow \mathbb{R}$ (see \Cref{remark realization of sign distribution}) and $\partial M \rightarrow \mathbb{R}^{2}$ (compare \Cref{proposition extension of G} and \Cref{proposition cusps and vector fields}) to admissible maps $M \rightarrow \mathbb{R}$ and $M \rightarrow \mathbb{R}^{2}$, respectively.

\begin{lemma}\label{remark realization of sign distribution}
Let $M^{n}$ be a compact $n$-manifold possibly with boundary.
Given a Morse function $g \colon \partial M \rightarrow \mathbb{R}$ and a map $\sigma \colon S(g) \rightarrow \{\pm 1\}$, there exists a Morse function $f \colon M \rightarrow \mathbb{R}$ such that $f|_{\partial M} = g$ and $\sigma_{f} = \sigma$ (see (\ref{sign function})).
\end{lemma}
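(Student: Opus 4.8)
The plan is to build $f$ by hand on a collar of $\partial M$, where the sign condition is a pointwise normalization, and then extend over the interior by a standard relative transversality argument.

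First I would fix a collar embedding $c \colon [0, 3) \times \partial M \hookrightarrow M$, and choose a smooth function $\rho \colon \partial M \to \mathbb{R}$ with $\rho(x) = \sigma(x)$ for every $x \in S(g)$. Such a $\rho$ exists because $\partial M$ is compact, so $S(g)$ is a finite set and prescribing finitely many values is no obstruction. I would then set $f(c(t, y)) = g(y) + t \cdot \rho(y)$ on the collar. The point of the construction is that at a critical point $x \in S(g)$ one has $dg_x = 0$, so the only way $f$ can be a submersion there is if $\rho(x) \neq 0$, and the sign of $\rho(x)$ will be exactly the prescribed sign.

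Second I would verify the two required properties on the collar. For $y \in \partial M \setminus S(g)$ one has $df_{c(0,y)}|_{T\partial M} = dg_y \neq 0$, while for $x \in S(g)$ one has $df_{c(0,x)}(dc_{(0,x)}(\partial_t)) = \rho(x) = \sigma(x) = \pm 1 \neq 0$; hence $f$ is a submersion at every point of $\{0\} \times \partial M$, and by compactness of $\partial M$ together with the openness of the submersion condition it remains a submersion on $c([0, \delta) \times \partial M)$ for some $\delta \in (0, 3)$. Moreover, for $x \in S(g)$ and any inward pointing vector $v = a \cdot dc_{(0,x)}(\partial_t) + w \in T_xM$ with $a > 0$ and $w \in T_x\partial M$, a direct computation gives $df_x(v) = a\,\rho(x) + dg_x(w) = a\,\sigma(x)$, so that $\sigma_f(x) = \sigma(x)$ by (\ref{sign function}); and of course $f|_{\partial M} = g$.

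Third I would extend the germ of $f$ along $\partial M$ to a Morse function on all of $M$. Using a smooth Tietze-type extension, pick a smooth $F_0 \colon M \to \mathbb{R}$ with $F_0 = f$ on $c([0, \delta/2] \times \partial M)$. Then $F_0$ has no critical points on the neighborhood $c([0, \delta/2) \times \partial M)$ of $\partial M$, and a relative version of the Thom transversality theorem, applied to the $1$-jet extension $j^1 F_0$ and the submanifold of $J^1(M, \mathbb{R})$ that cuts out non-degenerate critical points (compare the transversality techniques in \cite[Chapter II]{gol} and the proof of \Cref{proposition generic extension}), yields a smooth function $f \colon M \to \mathbb{R}$ which coincides with $F_0$ on $c([0, \delta/2] \times \partial M)$ and all of whose critical points are non-degenerate. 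Since $f$ is left unchanged on $c([0, \delta/2] \times \partial M)$, it is still a submersion on the neighborhood $c([0, \delta/2) \times \partial M)$ of $\partial M$, and since $f|_{\partial M} = g$ is Morse, $f$ is a Morse function on $M$ in our sense with $f|_{\partial M} = g$ and $\sigma_f = \sigma$ by the computation of the previous step.

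The main obstacle here is minimal: the genuinely essential observation is the one forcing $\rho(x) \neq 0$ at critical points of $g$ and identifying $\operatorname{sign}\rho(x)$ with $\sigma_f(x)$. The remaining ingredients — existence of $\rho$, smooth extension of $F_0$, and the relative perturbation making all interior critical points non-degenerate — are standard, and the only care needed is to perform that perturbation rel a collar of $\partial M$ so that submersivity near $\partial M$ is preserved automatically.
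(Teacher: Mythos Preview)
Your proof is correct and follows essentially the same approach as the paper: construct $f$ on a collar as $g(y) + t \cdot \rho(y)$ for a function $\rho$ taking the prescribed signs at critical points, verify submersivity and the sign condition there, then extend the germ to a Morse function on $M$. The paper builds $\rho$ locally via bump functions $\pm\alpha$ supported in Morse charts around each critical point, whereas you take a single global $\rho$ on $\partial M$ --- a minor streamlining, not a genuinely different route.
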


\begin{proof}
The normal form of a non-degenerate critical point of index $i$,
$$
\phi \colon \mathbb{R}^{n-1} \rightarrow \mathbb{R}, \qquad x = (x_{1}, \dots, x_{n-1}) \mapsto c - x_{1}^{2} - \dots - x_{i}^{2} + x_{i+1}^{2} + \dots + x_{n-1}^{2},
$$
extends to a submersion
$$
\widetilde{\phi} \colon \mathbb{R}^{n-1} \times [0, \infty) \rightarrow \mathbb{R}, \qquad (x, t) \mapsto \phi(x) \pm t.
$$
Note that the sign of the summand $\pm t$ determines whether the image under $d \widetilde{\phi}_{(0, 0)}$ of an inward pointing tangent vector based at $(0, 0) \in \mathbb{R}^{n-1} \times [0, \infty)$ points into the positive or into the negative direction of the real axis.
Given a compact neighborhood $K \subset \mathbb{R}^{n-1}$ of the origin, there exists a submersion of the form
$$
\hat{\phi} \colon \mathbb{R}^{n-1} \times [0, \varepsilon) \rightarrow \mathbb{R}
$$
such that $\hat{\phi}(x, t) = \widetilde{\phi}(x, t)$ for all $x \in \mathbb{R}^{n-1}$ near the origin, and $\hat{\phi}(x, t) = \phi(x)$ for all $x \notin K$.
(In fact, for sufficiently small $\varepsilon > 0$, we can take $\hat{\phi}(x, t) = \phi(x) \pm t \cdot \alpha(x)$ for any function $\alpha \colon \mathbb{R}^{n-1} \rightarrow \mathbb{R}$ such that $\alpha(x) = 1$ for all $x \in \mathbb{R}^{n-1}$ near the origin and $\alpha(x) = 0$ for all $x \notin K$.)

Using submersions $\hat{\phi}$ of the above form in suitable charts around the critical points of $g$, we can extend $g \colon \partial M \rightarrow \mathbb{R}$ to a submersion $\hat{g} \colon \partial M \times [0, \varepsilon) \rightarrow \mathbb{R}$ defined on a collar neighborhood of $\partial M$ in $M$.
In addition, by choosing the appropriate signs for the summands $\pm t$, we can achieve that for every critical point $x \in S(g)$, the tangent vector $\sigma(x) \cdot d \hat{g}_{x}(v) \in T_{\hat{g}(x)} \mathbb{R} = \mathbb{R}$ points into the positive direction of the real axis for an inward pointing tangent vector $v \in T_{x}M$.
Finally, there is no obstruction to extending the $\partial M$-germ of the submersion $\hat{g}$ to the desired Morse function $f \colon M \rightarrow \mathbb{R}$.
\end{proof}

\begin{proposition}[see \Cref{figure proposition non-singular extension}]\label{proposition extension of G}
Let $V^{n}$ be a compact manifold possibly with boundary of dimension $n \geq 2$.
Suppose that $h \colon \partial V \times [0, \infty) \rightarrow \mathbb{R}$ is an admissible map that restricts to a Morse function $g = h|_{\partial V} \colon \partial V \rightarrow \mathbb{R}$ on $\partial V = \partial V \times \{0\}$.
In particular, the assignment $\sigma_{h} \colon S(g) \rightarrow \{\pm 1\}$ is defined as in (\ref{sign function}).
Let $G \colon V \rightarrow \mathbb{R}^{2}$ be a map such that $G|_{V \setminus \partial V}$ is generic, and $G|_{[0, \varepsilon) \times \partial V} = \operatorname{id}_{[0, \varepsilon)} \times g$ in some collar neighborhood $[0, \varepsilon) \times \partial V \subset V$ of $\partial V \subset V$.
Then, the following statements are equivalent:
\begin{enumerate}[(i)]
\item There exists a map $H \colon V \times [0, \infty) \rightarrow \mathbb{R}^{2}$ such that
\begin{itemize}
\item $H|_{V \times \{0\}} = G$,
\item $H|_{[0, \varepsilon') \times \partial V \times [0, \infty)} = \operatorname{id}_{[0, \varepsilon')} \times h$ for some $\varepsilon' \in (0, \varepsilon)$, and
\item $H|_{(V \setminus \partial V) \times [0, \infty)}$ is admissible.
\end{itemize}
\item There exists a map $v \colon S(G) \rightarrow \mathbb{R}^{2}$ such that
\begin{itemize}
\item for all $x \in S(G)$, we have $v(x) \notin dG_{x}(T_{x}V) \subset T_{G(x)} \mathbb{R}^{2} = \mathbb{R}^{2}$, and
\item there is $\varepsilon' \in (0, \varepsilon)$ such that $\sigma_{h}(y) \cdot v(x) \in \{0\} \times (0, \infty)$ for all $x = (s, y) \in S(G) \cap ([0, \varepsilon') \times \partial V) = [0, \varepsilon') \times S(g)$.
\end{itemize}
\end{enumerate}
\end{proposition}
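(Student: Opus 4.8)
The proof is an equivalence, so I would establish the two implications separately, with the geometric content concentrated in the direction (ii)$\Rightarrow$(i).

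\medskip

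\noindent\textbf{The direction (i)$\Rightarrow$(ii).}
Suppose $H\colon V\times[0,\infty)\to\mathbb{R}^2$ is given with the three listed properties. For a point $x\in S(G)\subset V\setminus\partial V$ (note $S(G)$ avoids the boundary since $G$ is a submersion near $\partial V$), consider the point $(x,0)\in V\times[0,\infty)$. The map $H$ restricted to $V\times\{0\}$ equals $G$, and $H$ is admissible on $(V\setminus\partial V)\times[0,\infty)$, hence a submersion at $(x,0)$. I would define $v(x)=dH_{(x,0)}(w)$ for an inward-pointing vector $w\in T_{(x,0)}(V\times[0,\infty))$ transverse to $V\times\{0\}$ — concretely, $w=\partial/\partial t$. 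Since $H$ is a submersion at $(x,0)$ while $G=H|_{V\times\{0\}}$ is \emph{not} (as $x\in S(G)$), the image $dH_{(x,0)}(T_{(x,0)}(V\times\{0\}))=dG_x(T_xV)$ is a proper subspace of $T_{G(x)}\mathbb{R}^2$, and adding the $t$-direction must fill out the remaining dimension; hence $v(x)\notin dG_x(T_xV)$. The collar condition $H|_{[0,\varepsilon')\times\partial V\times[0,\infty)}=\operatorname{id}_{[0,\varepsilon')}\times h$ forces, for $x=(s,y)\in[0,\varepsilon')\times S(g)$, that $v(x)$ is computed from $dh$ applied to an inward vector at $y\in\partial V$; by the very definition of $\sigma_h$ in (\ref{sign function}), $\sigma_h(y)\cdot v(x)$ lies in $\{0\}\times(0,\infty)$. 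Smoothness of $v$ in $x$ follows from smoothness of $H$. (A minor point: one must check $v$ is defined on all of $S(G)$, not just near the boundary; this is immediate since $H$ is admissible on the whole of $(V\setminus\partial V)\times[0,\infty)$.)

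\medskip

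\noindent\textbf{The direction (ii)$\Rightarrow$(i).}
This is the substantive part. Given $v\colon S(G)\to\mathbb{R}^2$, the idea is to first extend $v$ from the $1$-manifold $S(G)$ to a vector field $\widetilde v$ along all of $V$ — i.e.\ a section $x\mapsto \widetilde v(x)\in T_{G(x)}\mathbb{R}^2=\mathbb{R}^2$, which is just a map $\widetilde v\colon V\to\mathbb{R}^2$ — with two properties: it agrees with $v$ on $S(G)$, and on the collar $[0,\varepsilon')\times\partial V$ it is compatible with $h$ in the sense that $\widetilde v(s,y)=\sigma_h(y)^{-1}\cdot(\text{the }\partial/\partial t\text{-derivative of }h)$, or more simply, is chosen so that the resulting $H$ restricts correctly. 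Such an extension exists with no obstruction since $\mathbb{R}^2$ is contractible and we are extending a map into a vector space (using a partition of unity, interpolating between the prescribed values on the closed set $S(G)$, the prescribed collar behavior, and anything elsewhere). Then I would define
\begin{align*}
H\colon V\times[0,\infty)\to\mathbb{R}^2,\qquad H(x,t)=G(x)+t\cdot\widetilde v(x),
\end{align*}
after modifying this near $\partial V$ so that the collar condition $H|_{[0,\varepsilon')\times\partial V\times[0,\infty)}=\operatorname{id}_{[0,\varepsilon')}\times h$ holds exactly (on the collar, $h$ need not be linear in $t$, so one replaces $G(x)+t\widetilde v(x)$ by $\operatorname{id}\times h$ there and interpolates in an intermediate zone, exactly as in the cut-off constructions used in \Cref{remark realization of sign distribution} and \Cref{remark target plane}). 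The three required properties are then checked: $H|_{V\times\{0\}}=G$ is immediate; the collar identity holds by construction; and admissibility of $H|_{(V\setminus\partial V)\times[0,\infty)}$ — i.e.\ $H$ is a submersion on a neighborhood of $\partial(V\times[0,\infty))\cap((V\setminus\partial V)\times[0,\infty))$, which is $(V\setminus\partial V)\times\{0\}$ — is exactly where the condition $v(x)\notin dG_x(T_xV)$ enters: at a point $(x,0)$ with $x\in S(G)$, $dH_{(x,0)}$ sends $T_xV$ onto $dG_x(T_xV)$ (a line) and sends $\partial/\partial t$ to $\widetilde v(x)=v(x)$, which is not on that line, so $dH_{(x,0)}$ is surjective; at a point $(x,0)$ with $x\notin S(G)$, already $dG_x$ is surjective. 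Shrinking to a small neighborhood of $V\times\{0\}$ preserves submersivity by openness.

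\medskip

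\noindent\textbf{Main obstacle.}
The only real care is bookkeeping near the boundary: reconciling the "linear in $t$" ansatz $G(x)+t\widetilde v(x)$ with the prescribed collar form $\operatorname{id}\times h$, where $h$ is an arbitrary admissible map and in particular not affine in the $[0,\infty)$-coordinate, while simultaneously keeping the extension $\widetilde v$ honest on $S(G)$ and the submersion property intact in a neighborhood of $V\times\{0\}$. This is handled by a standard two-zone interpolation with cut-off functions — on $[0,\varepsilon'/2)\times\partial V$ use $\operatorname{id}\times h$, outside $[0,\varepsilon')\times\partial V$ use the linear model, and interpolate in between, checking that the sign condition on $v$ guarantees the interpolated map stays a submersion there (this uses that "$v(x)\notin dG_x(T_xV)$" is an open condition). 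Apart from this gluing, everything reduces to elementary linear algebra on $2$-dimensional targets and the obstruction-free extension of a map into $\mathbb{R}^2$.
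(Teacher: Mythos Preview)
Your proposal is correct and follows essentially the same approach as the paper: for (i)$\Rightarrow$(ii) both take $v(x)=\partial_{t}H(x,t)|_{t=0}$ and verify the two conditions by rank counting and the definition of $\sigma_{h}$; for (ii)$\Rightarrow$(i) both extend $v$ off the $1$-manifold $S(G)$, use the linear ansatz $H(x,t)=G(x)+t\,\widetilde v(x)$, glue it to the exact collar form $\operatorname{id}\times h$ by a partition of unity, and check the submersion condition on $V\times\{0\}$ via the same convex-combination (``same side of the line $dG_{x}(T_{x}V)$'') argument you outline. The only organizational difference is that the paper additionally singles out small balls around each cusp $c$ and uses the \emph{constant} vector $v(c)$ there before gluing, whereas your direct global extension $\widetilde v$ with $\widetilde v|_{S(G)}=v$ makes this extra step unnecessary.
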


\begin{figure}[htbp]
\centering
\fbox{\begin{tikzpicture}
    \draw (0, 0) node {\includegraphics[height=0.6\textwidth]{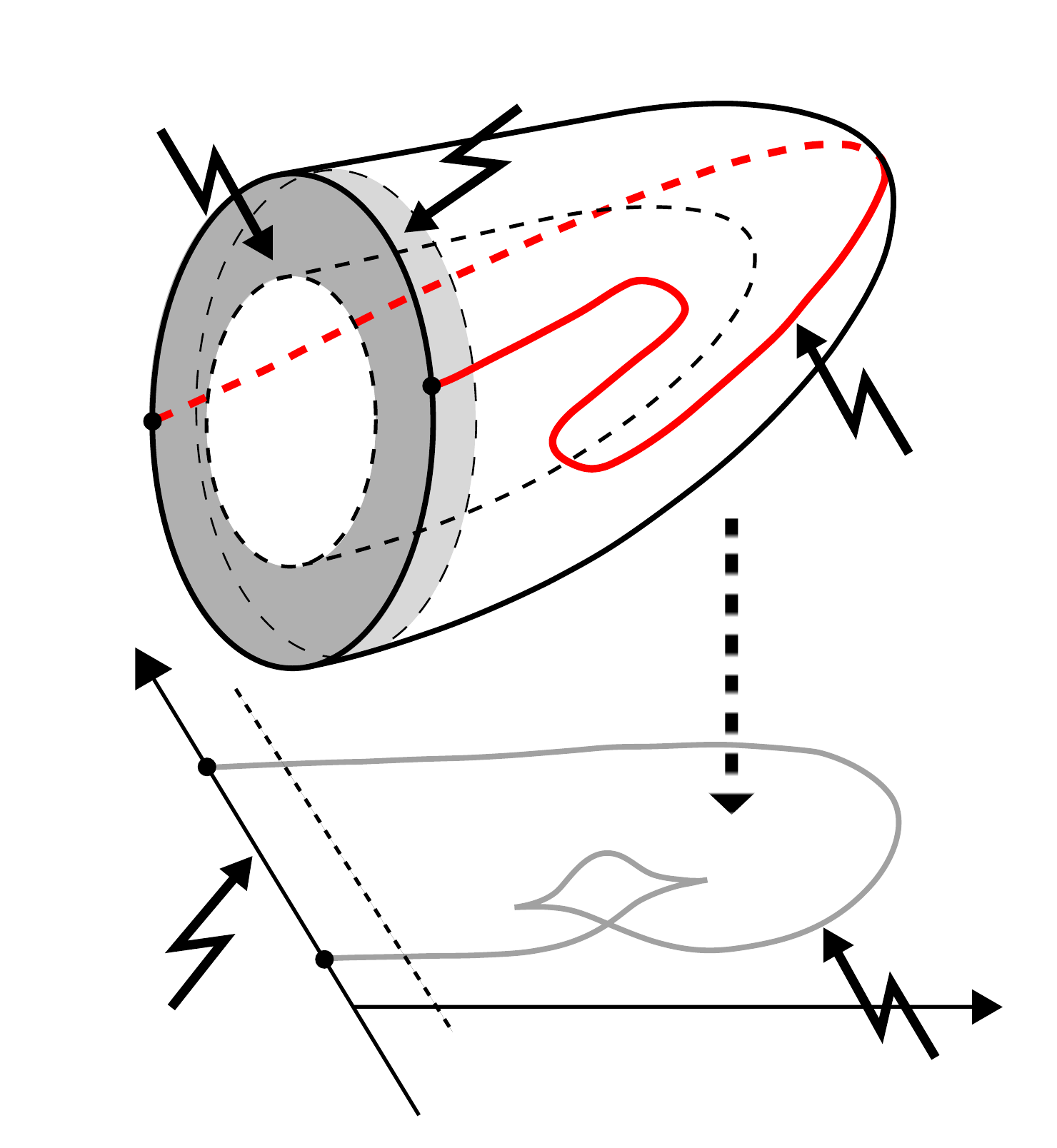}};
    \draw (1.7, -0.4) node {$H$};
    \draw (-2.5, 3.35) node {$\partial V \times [0, \infty)$};
    \draw (2.6, 0.4) node {$S(G)$};
    \draw (-0.1, 3.35) node {$[0, \varepsilon) \times \partial V$};
    \draw (-0.4, -3.2) node {$\varepsilon$};
    \draw (2.5, -3.5) node {$G(S(G))$};
    \draw (-2.45, -3.2) node {$h(\partial V \times [0, \infty))$};
\end{tikzpicture}}
\caption{On the manifold $V \times [0, \infty)$ with corners along $\partial V = \partial V \times \{0\}$ we consider a Morse function $g \colon \partial V \rightarrow \mathbb{R} = \{0\} \times \mathbb{R}$ with two extensions $h \colon \partial V \times [0, \infty) \rightarrow \mathbb{R}$ and $G \colon V \rightarrow \mathbb{R}^{2}$.
An extension $H \colon V \times [0, \infty) \rightarrow \mathbb{R}^{2}$ of $h \cup_{\partial V} G$ with the properties of \Cref{proposition extension of G}(i) exists if and only if there is a map $v \colon S(G) \rightarrow \mathbb{R}^{2}$ with the properties of \Cref{proposition extension of G}(ii).}
\label{figure proposition non-singular extension}
\end{figure}

\begin{proof}
(i) $\Rightarrow$ (ii).
Given $H \colon V \times [0, \infty) \rightarrow \mathbb{R}^{2}$, we consider the map
$$
v \colon S(G) \rightarrow \mathbb{R}^{2}, \quad v(x) = \partial_{t} H(x, t)|_{t = 0}.
$$
Let us check the claimed properties of $v$.
Given $x \in S(G)$, the real vector space $dG_{x}(T_{x} V)$ has dimension $1$ because $x$ is either a fold point or a cusp of $G$, while the real vector space
\begin{align*}
&d H_{(x, 0)}(T_{(x, 0)}(V \times [0, \infty))) \\
&= dG_{x}(T_{x} V) + dH_{(x, 0)}(0 \times T_{0} [0, \infty)) \\
&= dG_{x}(T_{x} V) + \mathbb{R} \cdot v(x)
\end{align*}
must be $2$-dimensional because $H$ is a submersion at $x = (x, 0) \in V \times [0, \infty)$.
Hence, $v(x) \notin dG_{x}(T_{x}V)$ for all $x \in S(G)$.
Moreover, for $x = (s, y) \in S(G) \cap ([0, \varepsilon') \times \partial V) = [0, \varepsilon') \times S(g)$ we have
$$
v(x) = \partial_{t} H(x, t)|_{t = 0} = (0, \partial_{t} h(y, t)|_{t = 0}).
$$
Since by definition of $\sigma_{h}$ the tangent vector
$$
\sigma_{h}(y) \cdot \partial_{t} h(y, t)|_{t = 0} = \sigma_{h}(y) \cdot dh_{(y, 0)}(\partial_{t}|_{(y, 0)}) \in T_{h(y, 0)} \mathbb{R} = \mathbb{R}
$$
points into the positive direction of the real axis, it follows that $\sigma_{h}(y) \cdot v(x) \in \{0\} \times (0, \infty)$.

(ii) $\Rightarrow$ (i).
Let $\Sigma \subset V \setminus \partial V$ denote the set of cusps of $G$.
For every $c \in \Sigma$ we choose a small open disk neighborhood $B_{c}$ of $c$ in $V \setminus ([0, \varepsilon) \times \partial V)$ such that $B_{c} \cap S(G)$ is connected, and define
$$
H_{c} \colon B_{c} \times [0, \infty) \rightarrow \mathbb{R}^{2}, \quad H_{c}(x, t) = G(x) + t \cdot v(c).
$$
Note that if $B_{c}$ is chosen sufficiently small, then $H_{c}$ is a submersion at every point of $B_{c} = B_{c} \times \{0\}$.
(Indeed, note that $v(c) \notin dG_{c}(T_{c}V)$ implies that $v(c) \notin dG_{x}(T_{x}V)$ for all $x \in S(G)$ that are sufficiently close to $c$.)
We may assume that $B_{c} \cap B_{c'} = \emptyset$ for $c \neq c'$ in $\Sigma$, and write $B_{\Sigma} = \bigsqcup_{c \in \Sigma} B_{c}$ and $H_{\Sigma} = \bigsqcup_{c \in \Sigma} H_{c}$.
In the following, we extend the $\Sigma \times [0, \infty)$-germ of the map $H_{\Sigma}$ and the $\{0\} \times \partial V \times [0, \infty)$-germ of the map
$$
H_{\varepsilon} = \operatorname{id}_{[0, \varepsilon)} \times h \colon [0, \varepsilon) \times \partial V \times [0, \infty) \rightarrow [0, \varepsilon) \times \mathbb{R}
$$
to the desired map $H \colon V \times [0, \infty) \rightarrow \mathbb{R}^{2}$.
For this purpose, we fix a compact neighborhood $\Sigma \subset K \subset B_{\Sigma}$,
and extend $v|_{S(G) \setminus (K \sqcup \partial V)}$ to a map
$$
\overline{v} \colon V \setminus (K \sqcup \partial V) \rightarrow \mathbb{R}^{2}.
$$
(This is possible because $v|_{S(G) \setminus (K \sqcup \partial V)}$ is defined on the $1$-dimensional submanifold $S(G) \setminus (K \sqcup \partial V) \subset V \setminus (K \sqcup \partial V)$.)
We use $\overline{v}$ to define the map
$$
H_{v} \colon (V \setminus (K \sqcup \partial V)) \times [0, \infty) \rightarrow \mathbb{R}^{2}, \qquad H_{v}(x, t) = G(x) + t \cdot \overline{v}(x).
$$
Note that $H_{v}$ is a submersion at every point of $V \setminus (K \sqcup \partial V) = (V \setminus (K \sqcup \partial V)) \times \{0\}$ because $\overline{v}(x) = v(x) \notin dG_{x}(T_{x}V)$ for all $x \in S(G) \setminus (K \sqcup \partial V)$.

Let $(\alpha, \beta)$ be a partition of unity subordinate to the open cover $V = U_{\alpha} \cup U_{\beta}$ given by
$$
U_{\alpha} = ([0, \varepsilon) \times \partial V) \sqcup B_{\Sigma}, \qquad U_{\beta} = V \setminus (K \sqcup \partial V).
$$
The pair $(\alpha, \beta)$ consists of functions $\alpha, \beta \colon V \rightarrow [0, 1]$ such that $\alpha(x) + \beta(x) = 1$ for all $x \in V$, and $\alpha|_{V \setminus K_{\alpha}} = 0$, $\beta|_{V \setminus K_{\beta}} = 0$ for some compact subsets $K_{\alpha} \subset U_{\alpha}$, $K_{\beta} \subset U_{\beta}$.
In particular, we have $\alpha|_{[0, \varepsilon'] \times \partial V} = 1$ for some $\varepsilon' \in (0, \varepsilon)$, $\alpha|_{K} = 1$, and $\beta|_{V \setminus (([0, \varepsilon) \times \partial V) \sqcup B_{\Sigma})} = 1$.
In the following, we use the partition of unity $(\alpha, \beta)$ to glue the maps $H_{\varepsilon} \sqcup H_{\Sigma}$ defined on $U_{\alpha} \times [0, \infty)$ and $H_{v}$ defined on $U_{\beta} \times [0, \infty)$ to obtain the desired map $H$.
Using the obvious extensions by zero, the maps $(x, t) \mapsto \alpha(x) \cdot (H_{\varepsilon} \sqcup H_{\Sigma})(x, t)$ and $(x, t) \mapsto \beta(x) \cdot H_{v}(x, t)$ extend to smooth maps on $V \times [0, \infty)$, and we define the map
$$
H \colon V \times [0, \infty) \rightarrow \mathbb{R}^{2}, \qquad H(x, t) = \alpha(x) \cdot (H_{\varepsilon} \sqcup H_{\Sigma})(x, t) + \beta(x) \cdot H_{v}(x, t).
$$
By construction, we have $H|_{V \times \{0\}} = G$ and $H|_{[0, \varepsilon') \times \partial V \times [0, \infty)} = H_{\varepsilon}|_{[0, \varepsilon') \times \partial V \times [0, \infty)} = \operatorname{id}_{[0, \varepsilon')} \times h$.
It remains to show that $H$ is a submersion at every point of $V = V \times \{0\}$.
Since $H|_{V \times \{0\}} = G$, it suffices to show that $H$ is a submersion at every point of $S(G)$.
This is clear for every cusp $c \in \Sigma$ of $G$ because $H|_{K \times [0, \infty)} = H_{\Sigma}|_{K \times [0, \infty)}$.
For every $x \in S(G) \setminus \Sigma$ we have to show that $\partial_{t} H(x, 0) \notin d G_{x}(T_{x}V)$.
In fact, if $x \in S(G) \setminus K_{\alpha}$, then $\beta(x) = 1$ implies that $\partial_{t}H(x, 0) = \overline{v}(x) = v(x) \notin d G_{x}(T_{x}V)$ by assumption on $v$.
In the remaining case that $x \in (S(G) \setminus \Sigma) \cap U_{\alpha}$ we prove our claim by showing that
$$
\partial_{t}H(x, 0) = \alpha(x) \cdot \partial_{t}(H_{\varepsilon} \sqcup H_{\Sigma})(x, 0) + \beta(x) \cdot v(x)
$$
is the convex combination of two (non-zero) vectors $\partial_{t}(H_{\varepsilon} \sqcup H_{\Sigma})(x, 0)$ and $v(x)$ that point to the same side of the line $d G_{x}(T_{x}V) \subset T_{G(x)}\mathbb{R}^{2} = \mathbb{R}^{2}$.
Indeed, if $x = (s, y) \in S(G) \cap ([0, \varepsilon) \times \partial V) = [0, \varepsilon) \times S(g)$, then
$$
\partial_{t}(H_{\varepsilon} \sqcup H_{\Sigma})(x, 0) = \partial_{t}H_{\varepsilon}(s, y, t)|_{t=0} = (0, \partial_{t} h(y, t)|_{t =0}).
$$
Using the definition of $\sigma_{h}$, we conclude that $\sigma_{h}(y) \cdot \partial_{t}(H_{\varepsilon} \sqcup H_{\Sigma})(x, 0) \in \{0\} \times (0, \infty)$.
Hence, it follows from the assumption $\sigma_{h}(y) \cdot v(x) \in \{0\} \times (0, \infty)$ that $v(x)$ and $\partial_{t}(H_{\varepsilon} \sqcup H_{\Sigma})(x, 0)$ point to the same side of the line $d G_{x}(T_{x}V) = \mathbb{R} \times \{0\} \subset T_{G(x)}\mathbb{R}^{2} = \mathbb{R}^{2}$.
Finally, if $x \in S(G) \cap (B_{c} \setminus \{c\})$ for some $c \in \Sigma$, then
$$
\partial_{t}(H_{\varepsilon} \sqcup H_{\Sigma})(x, 0) = \partial_{t}H_{c}(x, 0) = v(c).
$$
As the line $dG_{z}(T_{z}V) \subset T_{G(z)}\mathbb{R}^{2} = \mathbb{R}^{2}$ depends continuously on $z \in S(G)$ (compare \Cref{figure cusp construction}), it follows from $v(z), v(c) \notin dG_{z}(T_{z}V)$ for all $z \in S(G) \cap B_{c}$, and $v(z) \rightarrow v(c)$ for $z \rightarrow c$ that $v(x)$ and $v(c)$ point to the same side of the line $dG_{x}(T_{x}V)$.
\end{proof}

Condition (ii) of \Cref{proposition extension of G} can be related to the distribution of cusps of $G$ on the components of $S(G)$ as follows.

\begin{proposition}\label{proposition cusps and vector fields}
Let $V^{n}$ be a compact manifold possibly with boundary of dimension $n \geq 2$.
Suppose that $g \colon \partial V \rightarrow \mathbb{R}$ is a Morse function.
Let $G \colon V \rightarrow \mathbb{R}^{2}$ be a map such that $G|_{V \setminus \partial V}$ is generic, and $G|_{[0, \varepsilon) \times \partial V} = \operatorname{id}_{[0, \varepsilon)} \times g$ in some collar neighborhood $[0, \varepsilon) \times \partial V \subset V$ of $\partial V \subset V$.
Then, for any map $\sigma \colon S(g) \rightarrow \{\pm 1\}$ the following statements are equivalent:
\begin{enumerate}[(i)]
\item There is a map $v \colon S(G) \rightarrow \mathbb{R}^{2}$ such that
\begin{itemize}
\item for all $x \in S(G)$, we have $v(x) \notin dG_{x}(T_{x}V) \subset T_{G(x)} \mathbb{R}^{2} = \mathbb{R}^{2}$, and
\item there is $\varepsilon' \in (0, \varepsilon)$ such that $\sigma(y) \cdot v(x) \in \{0\} \times (0, \infty)$ for all $x = (s, y) \in S(G) \cap ([0, \varepsilon') \times \partial V) = [0, \varepsilon') \times S(g)$.
\end{itemize}
\item Every component $S$ of $S(G)$ satisfies the following.
If $\partial S = \emptyset$, then the number of cusps of $G$ that lie on $S$ is even.
If $\partial S \neq \emptyset$, then the number of cusps of $G$ that lie on $S$ is even if and only if $\sigma(x_{0}) \neq \sigma(x_{1})$, where $x_{0}, x_{1} \in S(g) = S(G) \cap \partial V$ denote the two endpoints of $S$.
\end{enumerate}
\end{proposition}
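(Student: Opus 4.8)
The plan is to reduce condition (i), component by component of $S(G)$, to a statement about the orientability of a line bundle, and then to evaluate the resulting $\mathbb{Z}/2$-obstruction in terms of the number of cusps. First I would record the structure of $S(G)$. Since $V$ is compact and $G$ restricts to $\operatorname{id}_{[0,\varepsilon)}\times g$ on the collar, $S(G)$ is a compact $1$-submanifold of $V$ with $\partial S(G)=S(G)\cap\partial V=S(g)$ and with the finitely many cusps of $G$ in its interior; near $\partial V$ it is the disjoint union of the ``vertical'' segments $[0,\varepsilon)\times\{x_i\}$ for $x_i\in S(g)$. Hence every component $S$ is either a circle contained in $V\setminus\partial V$, or an arc with endpoints $x_0,x_1\in S(g)$ meeting the collar exactly in $[0,\varepsilon)\times\{x_0\}$ and $[0,\varepsilon)\times\{x_1\}$. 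Along $S$, the assignment $x\mapsto\ell(x):=dG_x(T_xV)\subset\mathbb{R}^2$ is a continuous field of lines: the fold and cusp normal forms of \Cref{definition generic maps into the plane} show that $\ell(x)$ is $1$-dimensional and equals the tangent line at $G(x)$ of the image curve $G(S)$ (an immersed curve with semicubical cusps at the cusps of $G$); and near each $x_i$ one has $G=\operatorname{id}_{[0,\varepsilon)}\times g$ with $dg_{x_i}=0$, so $\ell=\mathbb{R}\times\{0\}$ on the two collar segments.

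Next I would reformulate (i). As the components of $S(G)$ are pairwise disjoint closed subsets, a map $v$ as in (i) exists if and only if on every component $S$ there is a map $v_S\colon S\to\mathbb{R}^{2}$ with $v_S(x)\notin\ell(x)$ everywhere that, when $S$ is an arc, additionally is a positive multiple of $(0,\sigma(x_i))$ on the two collar segments. Passing from $v_S$ to its orthogonal projection onto $\ell(x)^{\perp}$ identifies the existence of such a $v_S$ with the existence of a nowhere-zero section of the normal line bundle $\ell^{\perp}|_S$, that is, with orientability of $\ell|_S$ over $S$, subject in the arc case to a matching of the induced orientations of $\ell^{\perp}$ at $x_0$ and $x_1$. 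Let $k$ denote the integer such that a continuous angle-lift of $\ell$ increases by $k\pi$ along $S$ (well defined for a circle, and for an arc since $\ell$ is horizontal at both ends). Then (i) becomes: on a circle, ``$k$ even''; on an arc from $x_0$ to $x_1$, ``$\sigma(x_0)\sigma(x_1)=(-1)^{k}$''.

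Finally I would compute $k$ modulo $2$ in terms of the number $j$ of cusps on $S$. The velocity vector $w(x)$ of the immersion $G$ restricted to the fold locus $S\setminus\{\text{cusps}\}$, for a fixed direction of traversal of $S$, is continuous and nonzero off the cusps and spans $\ell(x)$ there; from the semicubical model $s\mapsto(-3s^{2},-2s^{3})$ of $G(S)$ near a cusp one sees that $w$ reverses direction across each cusp. Hence a continuous lift of the angle of $w/\lvert w\rvert$ varies off the cusps exactly as a continuous angle-lift of $\ell$ does and jumps by an odd multiple of $\pi$ at each cusp, while its total increment along $S$ is pinned down by the endpoints: around a circle $w$ returns to itself, whereas along an arc its angle changes by an odd multiple of $\pi$ since on the collar segments $w$ is constant horizontal, pointing out of $\partial V$ at $x_0$ and into $\partial V$ at $x_1$. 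Comparing the two evaluations gives $k\equiv j\pmod 2$ for a circle and $k\equiv j+1\pmod 2$ for an arc; substituting into the two conditions of the previous paragraph yields exactly (ii). Conversely, (ii)$\Rightarrow$(i) follows by building $v$ explicitly from the same description: on a circle with an even number of cusps take a continuous unit normal field to $\ell$, and on an arc take $v=(0,\sigma(x_i))$ on the collar segments and interpolate through a continuous orientation of $\ell^{\perp}$ in between, which is consistent precisely because (ii) forces $\sigma(x_0)\sigma(x_1)=(-1)^{k}$.

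I expect the main obstacle to be the sign bookkeeping in the last step: one must fix coherent conventions for the traversal direction of $S$, for the reversal of $w$ at a cusp, and for the velocity of $w$ at the boundary endpoints of an arc, so that the congruences $k\equiv j$ (circles) and $k\equiv j+1$ (arcs) come out with the parities claimed in (ii); one should also check that the semicubical normal form forces the velocity reversal at a cusp regardless of the cusp's absolute index.
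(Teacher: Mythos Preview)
Your proposal is correct and follows essentially the same approach as the paper. The paper encodes the same idea via the scalar function $\delta(x)=\det\bigl(dG_x(\mathfrak{o}_x),v(x)\bigr)$ for an orientation $\mathfrak{o}$ of $S(G)$, showing that $\delta$ vanishes exactly at cusps and changes sign there, which is precisely your observation that the velocity $w=dG(\mathfrak{o})$ reverses direction across each cusp; your line-bundle/winding-number packaging is a mild reformulation of this determinant computation, and your anticipated sign bookkeeping at arc endpoints matches the paper's analysis that $\delta(x_0)\cdot\delta(x_1)>0$ if and only if $\sigma(x_0)\neq\sigma(x_1)$.
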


\begin{proof}
(i) $\Rightarrow$ (ii).
Fix an orientation $\mathfrak{o}$ of $S(G)$.
(In other words, $\mathfrak{o}$ is a non-singular vector field along $S(G) \subset V$.)
In the following, we consider the function $\delta \colon S(G) \rightarrow \mathbb{R}$ which assigns to $x \in S(G)$ the determinant of the $2 \times 2$-matrix $(dG_{x}(\mathfrak{o}_{x}), v(x))$.
Let $\Sigma$ denote the set of cusps of $G$.
\begin{itemize}
\item We have $\delta^{-1}(0) = \Sigma$. In fact, if $c$ is a cusp of $G$, then $dG_{c}(\mathfrak{o}_{c}) = 0$, and thus $\delta(c) = 0$.
On the other hand, if $x$ is a fold point of $G$, then $v(x) \notin dG_{x}(T_{x}V) = \mathbb{R} \cdot dG_{x}(\mathfrak{o}_{x}) \cong \mathbb{R}$, and thus $\delta(x) \neq 0$.

\item The sign of $\delta$ changes when passing through a cusp $c$ of $G$ while following $S(G)$ along the orientation $\mathfrak{o}$.
In fact, if $\phi \colon (-1, 1) \rightarrow S(G)$ is an orientation preserving embedding such that $\phi(0) = c$, then
$$
\operatorname{lim}_{t \nearrow 0} \frac{dG_{\phi(s)}(\mathfrak{o}_{\phi(s)})}{||dG_{\phi(s)}(\mathfrak{o}_{\phi(s)})||} = - \operatorname{lim}_{t \searrow 0} \frac{dG_{\phi(s)}(\mathfrak{o}_{\phi(s)})}{||dG_{\phi(s)}(\mathfrak{o}_{\phi(s)})||} \quad \in dG_{c}(T_{c}V),
$$
as indicated in \Cref{figure cusp construction}.
On the other hand, \Cref{figure cusp construction} shows that $v(\phi(s))$ is continuous in $s$, and we have $v(c) \notin dG_{c}(T_{c}V)$ by assumption.
Hence, it follows as claimed that the sign of the determinant of $(dG_{x}(\mathfrak{o}_{x}), v(x))$ changes when $x \in S(G)$ passes through the cusp $c$ of $G$.
\end{itemize}

\begin{figure}[htbp]
\centering
\fbox{\begin{tikzpicture}
    \draw (0, 0) node {\includegraphics[height=0.4\textwidth]{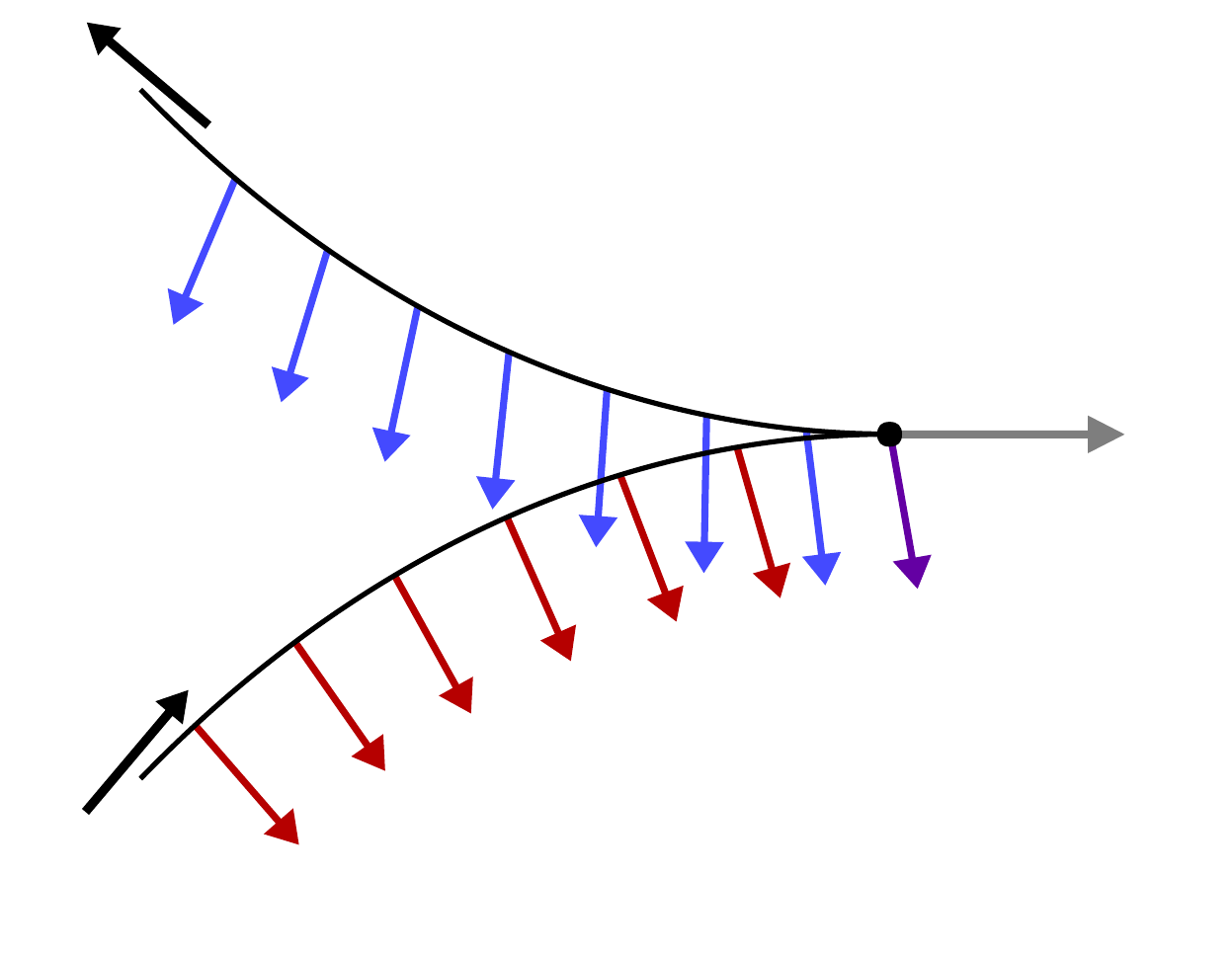}};
    \draw (1.5, 0.5) node {$G(c)$};
    \draw (-2.8, -1.0) node {$dG(\mathfrak{o})$};
    \draw (-1.8, 2.2) node {$dG(\mathfrak{o})$};
\end{tikzpicture}}
\caption{Illustration of the vector $v(z) \in T_{G(z)}\mathbb{R}^{2}$ for $z \in S(G)$ near a cusp $c$ of $G$.
By assumption on $v$ the vector $v(c)$ (purple arrow) does not lie in $dG_{c}(T_{c}V)$ (spanned by the grey arrow).
In the situation of this figure, when walking along $S(G)$ in the direction determined by the orientation $\mathfrak{o}$, $v(z)$ points to the right side of $dG(\mathfrak{o})$ before passing through the cusp $c$ (red arrows), and to the left side of $dG(\mathfrak{o})$ after passing through the cusp $c$ (blue arrows).}
\label{figure cusp construction}
\end{figure}

Now let $S$ be a component of $S(G)$.
If $\partial S = \emptyset$ (that is, if $S$ is diffeomorphic to the circle), then the above properties of $\delta$ imply that the number of cusps of $G$ that lie on $S$ is even.
If, however, $\partial S \neq \emptyset$ (that is, if $S$ is diffeomorphic to the interval $[0, 1]$), then the number of cusps of $G$ that lie on $S$ is either even or odd according to whether $\delta(x_{0}) \cdot \delta(x_{1}) > 0$ or $\delta(x_{0}) \cdot \delta(x_{1}) < 0$ at the two endpoints $x_{0}$ and $x_{1}$ of $S$.
Finally, we observe that $\delta(x_{0}) \cdot \delta(x_{1}) > 0$ holds if and only if $\sigma(x_{0}) \neq \sigma(x_{1})$.
(In fact, it follows from $dG_{x_{0}}(\mathfrak{o}_{x_{0}}) = - dG_{x_{1}}(\mathfrak{o}_{x_{1}}) \in \mathbb{R} \times \{0\} \subset \mathbb{R}^{2}$ that $\delta(x_{0}) \cdot \delta(x_{1}) >0$ holds if and only if $v(x_{0}), v(x_{1}) \in \{0\} \times \mathbb{R}$ point to different sides of the line $\mathbb{R} \times \{0\} \subset \mathbb{R}^{2}$.
But by the properties of $v$, this means that $\sigma(x_{0}) \neq \sigma(x_{1})$.)

(ii) $\Rightarrow$ (i).
Given an embedding $\psi \colon [0, 1] \rightarrow S(G)$, it is not hard to define a map $v_{\psi} \colon [0, 1] \rightarrow \mathbb{R}^{2}$ such that $v_{\psi}(s) \notin dG_{\psi(s)}(T_{\psi(s)}V)$ for all $s \in [0, 1]$.
Note that it is apparent from \Cref{figure cusp construction} how to define $v_{\psi}$ near cusps of $G$.

Fix a component $S$ of $S(G)$.
If $\partial S = \emptyset$, then we consider the map $v_{\psi_{S}} \colon [0, 1] \rightarrow \mathbb{R}^{2}$ obtained from an embedding $\psi_{S} \colon [0, 1] \rightarrow S$ such that $S \setminus \psi_{S}((0, 1))$ contains no cusps of $G$.
Since the number of cusps of $G$ that lie on $S$ is even by assumption, $v_{\psi_{S}}$ can clearly be extended to a map $v_{S} \colon S \rightarrow \mathbb{R}^{2}$ such that $v_{S}(x) \notin dG_{x}(T_{x}V)$ for all $x \in S$.
If $\partial S \neq \emptyset$, then $S$ is diffeomorphic to the interval $[0, 1]$, so that we can choose a map $v_{S} \colon S \rightarrow \mathbb{R}^{2}$ such that $v_{S}(x) \notin dG_{x}(T_{x}V)$ for all $x \in S$, and $v(x) \in \{0\} \times \mathbb{R}$ for all $x \in S(G) \cap ([0, \varepsilon') \times \partial V) = [0, \varepsilon') \times S(g)$ for some $\varepsilon' \in (0, \varepsilon)$.
Let $x_{0}$ and $x_{1}$ denote the two endpoints of $S$.
By replacing $v_{S}$ with $-v_{S}$ if necessary, we may assume that $\sigma(x_{0}) \cdot v_{S}(x_{0}) \in \{0\} \times (0, \infty)$.
In remains to show that $\sigma(x_{1}) \cdot v_{S}(x_{1}) \in \{0\} \times (0, \infty)$ as well.
For this purpose, we consider the function $\delta_{S} \colon S \rightarrow \mathbb{R}$ which assigns to $x \in S$ the determinant of the $2 \times 2$-matrix $(dG_{x}(\mathfrak{o}_{x}), v_{S}(x))$, where $\mathfrak{o}$ denotes an orientation of $S$.
Just as in the proof of the implication (i) $\Rightarrow$ (ii), it follows that $\delta_{S}^{-1}(0)$ is the set of cusps of $G$ on $S$, and the sign of $\delta_{S}$ changes when passing through a cusp $c$ of $G$ while following $S$ along the orientation $\mathfrak{o}$.
Hence, by construction, we have $\delta(x_{0}) \cdot \delta(x_{1}) > 0$ if and only if the number of cusps of $G$ that lie on $S$ is even, which is by (ii) furthermore equivalent to $\sigma(x_{0}) \neq \sigma(x_{1})$.
As before, note that $\delta(x_{0}) \cdot \delta(x_{1}) >0$ holds if and only if $v(x_{0}), v(x_{1}) \in \{0\} \times \mathbb{R}$ point to different sides of the line $\mathbb{R} \times \{0\} \subset \mathbb{R}^{2}$, and the claim $\sigma(x_{1}) \cdot v_{S}(x_{1}) \in \{0\} \times (0, \infty)$ follows.

Finally, we define $v$ by $v|_{S} = v_{S}$ for every component $S$ of $S(G)$.
\end{proof}

\section{Cusps and Euler characteristic}\label{Creation and elimination of cusps}
For a Morse function $g \colon P^{n-1} \rightarrow \mathbb{R}$ defined on a closed $(n-1)$-manifold, and a map $\sigma \colon S(g) \rightarrow \{\pm 1\}$, we shall use in this section the notation
\begin{align}\label{abstract sign Euler characteristic}
\chi_{+}(g; \sigma) = \sum_{i=0}^{n-1} (-1)^{i} \cdot \# S_{+}^{i}(g; \sigma),
\end{align}
where $S_{+}^{i}(g; \sigma) = S^{i}(g) \cap S_{+}(g; \sigma)$, and $S_{+}(g; \sigma) \subset S(g)$ denotes the subset of those critical points $x$ of $g$ for which $\sigma(x) = +1$.
Note that if $h \colon \partial V \times [0, \infty) \rightarrow \mathbb{R}$ is an admissible map in the sense of \Cref{Non-singular extensions} such that $h|_{\partial V \times \{0\}} = g$, then we have $\chi_{+}(g; \sigma_{h}) = \chi_{+}[h]$ as defined in \Cref{signed Euler characteristic}.
For later reference, we also point out that, using \Cref{euler characteristic},
\begin{align}\label{helpful expression}
\frac{\chi(P)}{2} - \chi_{+}(g; \sigma) = - \frac{1}{2} \sum_{i=0}^{n-1} (-1)^{i} \cdot \sum_{x \in S^{i}(g)} \sigma(x).
\end{align}

The purpose of this section is to express the existence of a map $G \colon V^{n} \rightarrow \mathbb{R}^{2}$ with the properties of \Cref{proposition cusps and vector fields} in terms of $\chi_{+}(g; \sigma)$.
The result is provided by \Cref{proposition condition n even} and \Cref{proposition condition n odd} depending on whether $n$ is even or odd.

\begin{lemma}\label{cusp invariant}
Let $G \colon Y \rightarrow \mathbb{R}^{2}$ be a map defined on a compact manifold possibly with boundary such that $G|_{Y \setminus \partial Y}$ is generic, and there is a Morse function $g \colon \partial Y \rightarrow \mathbb{R}$ such that $G|_{[0, \varepsilon) \times \partial Y} = \operatorname{id}_{[0, \varepsilon)} \times g$ in some collar neighborhood $[0, \varepsilon) \times \partial Y \subset Y$ of $\partial Y \subset Y$.
Then, $g$ has an even number of critical points, and  the number of cusps of $G$ has the same parity as $\chi(Y) + \frac{1}{2} \# S(g)$.
\end{lemma}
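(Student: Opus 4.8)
The plan is to reduce to a conveniently chosen representative of $G$ and then compute the parity of the number of cusps by two independent Euler‑characteristic counts. First I would record the structure of $S(G)$ near the boundary: since $G|_{[0,\varepsilon)\times\partial Y}=\operatorname{id}_{[0,\varepsilon)}\times g$, the singular set satisfies $S(G)\cap([0,\varepsilon)\times\partial Y)=[0,\varepsilon)\times S(g)$, so $S(G)$ is a compact $1$-manifold that meets $\partial Y$ transversally with $\partial S(G)=S(g)$. A compact $1$-manifold has an even number of boundary points, whence $\#S(g)$ is even; moreover the number of arc components of $S(G)$ equals $\tfrac12\#S(g)$, each arc having two distinct endpoints in $S(g)$.

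Next I would reduce to a special $G$. Under a generic one‑parameter deformation of $G$ that keeps the germ near $\partial Y$ fixed, cusps are created and annihilated only in pairs (lips, beaks and swallowtail transitions), so the parity of the number of cusps is a deformation invariant; as any two maps as in the statement with the same germ are joined by the affine homotopy, which one then perturbs rel the germ by a relative Thom transversality argument (cf.\ \cite{gol, lev}), this parity depends only on $Y$ and on the germ of $G$ near $\partial Y$. Hence it suffices to establish the formula for one convenient representative, which I would take of the form $G=(G_1,G_2)$ where $G_1\colon Y\to[0,\infty)$ is the collar coordinate near $\partial Y$ extended to a Morse function with $G_1^{-1}(0)=\partial Y$ and only interior (non‑degenerate) critical points, and $G_2$ restricts to $g$ near $\partial Y$; a standard transversality argument shows that for generic such $G_2$ the pair $(G_1,G_2)$ is a generic map whose singular points lying over $\operatorname{Crit}(G_1)$ are folds, and that the remaining genericity conditions below hold.

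Now the two counts. Let $\bar T\colon S(G)\to\mathbb{RP}^{1}$ be the "singular tangent line" map $x\mapsto\operatorname{im}(dG_x)$; it is well defined on all of $S(G)$ (including at cusps), is constant equal to the horizontal line $[1:0]$ near $\partial Y$, and satisfies $\bar T^{-1}([0:1])=\operatorname{Crit}(G_1)$. On the one hand, $G_1$ presents $Y$ as $\partial Y\times I$ with one $k$-handle attached for each index $k$ critical point, so $\chi(Y)=\chi(\partial Y)+\sum_k(-1)^kc_k\equiv\#\operatorname{Crit}(G_1)\pmod 2$, using that $\chi(\partial Y)$ is even because $\partial Y$ bounds. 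On the other hand, a cusp of $G$ is exactly a point where $S(G)$ is tangent to $\ker dG$, that is, a transverse zero of the section $\mathfrak{s}$ of the line bundle $\operatorname{Hom}(TS(G),\operatorname{im}dG)$ over $S(G)$ obtained by restricting $dG$ to $TS(G)$. Since a $1$-manifold is orientable, $w_1\!\left(\operatorname{Hom}(TS(G),\operatorname{im}dG)\right)=w_1(\operatorname{im}dG)=\bar T^{*}(\text{generator of }H^1(\mathbb{RP}^{1};\mathbb{Z}/2))$; hence on each circle component the number of cusps is congruent mod $2$ to the number of points of $\bar T^{-1}([0:1])$ on it. Finally, by inspecting $\mathfrak{s}$ near $\partial Y$ — where $TS(G)=\mathbb{R}\partial_s$, $\ker dG=T\partial Y$ and $dG(\partial_s)$ is the horizontal direction of the target — one finds that on each arc component $A$ the number of cusps is congruent to $1$ plus the number of points of $\bar T^{-1}([0:1])$ on $A$. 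Summing over all components and using that there are $\tfrac12\#S(g)$ arcs and that $\bar T^{-1}([0:1])=\operatorname{Crit}(G_1)$ yields $\#\{\text{cusps of }G\}\equiv\tfrac12\#S(g)+\#\operatorname{Crit}(G_1)\equiv\tfrac12\#S(g)+\chi(Y)\pmod 2$.

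The main obstacle is the "$+1$ per arc component" in the last count: getting the sign bookkeeping right for the section $\mathfrak{s}$ at the two endpoints of an arc — that is, comparing the natural framings of $\ker dG$ and of $\operatorname{im}dG$ near $\partial Y$ with the collar coordinate and with the horizontal direction of $\mathbb{R}^{2}$ — is precisely what produces the summand $\tfrac12\#S(g)$ and distinguishes the statement from the naive congruence $\#\{\text{cusps}\}\equiv\chi(Y)$. The transversality steps used in the reduction and in the choice of $G_2$ are routine but must be performed relative to the fixed germ of $G$ near $\partial Y$.
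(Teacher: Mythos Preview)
Your argument is correct, but it takes a genuinely different route from the paper's proof, and it is worth contrasting the two.

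The paper also begins by arranging the first coordinate of $G$ to be Morse: after a small perturbation of $G|_{Y\setminus\partial Y}$ (small enough that the set of cusps is unchanged, so no bifurcation analysis is needed), one may assume that $\tau=\pi_{1}\circ G\colon Y\to[0,1]$ is Morse with $\tau^{-1}(0)=\partial Y$, that every critical point of $\tau$ is a fold point of $G$, and that $\tau_{S}:=\tau|_{S(G)}$ is Morse. The key elementary observation is then that $S(\tau_{S})=S(\tau)\sqcup\Sigma$: at a fold point $p$ one has $dG_{p}(T_{p}S(G))=\operatorname{im}dG_{p}$, so $p\in S(\tau_{S})$ iff $\operatorname{im}dG_{p}$ is vertical iff $p\in S(\tau)$, while cusps are always critical for $\tau_{S}$. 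The proof is then three applications of the single Morse identity $\#S(f)\equiv\chi(W)+\chi(f^{-1}(0))\pmod 2$, applied to $g$, to $\tau$, and to $\tau_{S}$, followed by one line of arithmetic.

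Your approach replaces the third application by a characteristic-class count: cusps are the transverse zeros of the section $\mathfrak{s}=dG|_{TS(G)}$ of $\operatorname{Hom}(TS(G),\operatorname{im}dG)$, so on each circle component their parity is $\langle w_{1}(\operatorname{im}dG),[S^{1}]\rangle=\deg_{2}\bar T$, and on each arc it is governed by the boundary framing, yielding the ``$+1$ per arc'' term. This is a perfectly valid and more geometric viewpoint, and your identification $\bar T^{-1}([0{:}1])=\operatorname{Crit}(G_{1})$ is exactly the paper's identity $S(\tau_{S})\setminus\Sigma=S(\tau)$ in disguise. What the paper's route buys is economy: it needs only stability of cusps under small perturbation rather than the full lips/beaks/swallowtail deformation-invariance you invoke, and it avoids the endpoint bookkeeping entirely by subsuming it into the Morse identity for $\tau_{S}$ on a $1$-manifold with boundary (where $\chi(S(G))=\tfrac12\#S(g)$ already encodes your ``$+1$ per arc''). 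What your route buys is a conceptual explanation of \emph{why} the arc components contribute the extra $\tfrac12\#S(g)$: it is precisely the mismatch between the inward collar framing of $TS(G)$ at the two endpoints of each arc.
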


\begin{proof}
Without loss of generality, we may consider $G$ as a map $G \colon Y \rightarrow [0, 1] \times \mathbb{R}$ such that $G^{-1}(\{0\} \times \mathbb{R}) = \partial Y$ and $G^{-1}(\{1\} \times \mathbb{R}) = \emptyset$ (compare \Cref{remark target plane}).
Let $\Sigma$ denote the set of cusps of $G$.
After perturbing the generic map $G|_{Y \setminus \partial Y}$ on a compact subset, we may assume without loss of generality that the composition $\tau = \pi \circ G \colon Y \rightarrow [0, 1]$ of $G$ with the projection $\pi \colon [0, 1] \times \mathbb{R} \rightarrow [0, 1]$ to the first factor has the following properties:
\begin{itemize}
\item the function $\tau \colon Y \rightarrow [0, 1]$ satisfies $\tau^{-1}(0) = \partial Y$, $\tau^{-1}(1) = \emptyset$, and every critical point of $\tau$ is non-degenerate, and is also a fold point of $G$, and
\item the map $\tau$ restricts to a function $\tau_{S} \colon S(G) \rightarrow [0, 1]$ such that $\tau_{S}^{-1}(0) = S(g)$, $\tau_{S}^{-1}(1) = \emptyset$, the critical points of $\tau_{S}$ are all non-degenerate, and $S(\tau_{S}) = S(\tau) \sqcup \Sigma$.
\end{itemize}

Recall the fact from Morse theory that for a compact manifold possibly with boundary $W$ and a function $f \colon W \rightarrow [0, 1]$ with only non-degenerate critical points such that $\partial W = f^{-1}(0) \sqcup f^{-1}(1)$ and $\partial W \cap S(f) = \emptyset$, we have $\# S(f) \equiv \chi(W) + \chi(f^{-1}(0)) \; (\operatorname{mod} 2)$.
We apply this fact to the functions $g \colon \partial Y \rightarrow \mathbb{R}$, $\tau \colon Y \rightarrow [0, 1]$, and $\tau_{S} \colon S(G) \rightarrow [0, 1]$ to obtain the following equations for their numbers of critical points:
\begin{enumerate}[(1)]
\item $\# S(g) \equiv \chi(\partial Y) \; (\operatorname{mod} 2)$,
\item $\# S(\tau) \equiv \chi(Y) + \chi(\partial Y) \; (\operatorname{mod} 2)$, and
\item $\# S(\tau_{S}) \equiv \chi(S(G)) + \chi(S(g)) \; (\operatorname{mod} 2)$.
\end{enumerate}
Note that $\# S(\tau_{S}) = \# \Sigma + \# S(\tau)$ by the properties of $\tau_{S}$.
Moreover, note that $\chi(S(G)) = \frac{1}{2} \# S(g)$ and $\chi(S(g)) = \# S(g)$.
Hence, (3) implies
$$
\# \Sigma + \frac{1}{2} \# S(g) \equiv \# S(\tau) + \# S(g) \; (\operatorname{mod} 2).
$$
Finally, using (1) and (2), we conclude that $\# S(\tau) + \# S(g) \equiv \chi(Y) \; (\operatorname{mod} 2)$.
\end{proof}

\begin{proposition}\label{proposition condition n even}
Let $V^{n}$ be a connected compact manifold possibly with boundary of dimension $n \geq 2$.
Fix a Morse function $g \colon \partial V \rightarrow \mathbb{R}$ and a map $\sigma \colon S(g) \rightarrow \{\pm 1\}$.
If $n$ is even, then the following statements are equivalent:
\begin{enumerate}[(i)]
\item There exists a map $G \colon V \rightarrow \mathbb{R}^{2}$ such that $G|_{V \setminus \partial V}$ is generic, $G|_{[0, \varepsilon) \times \partial V} = \operatorname{id}_{[0, \varepsilon)} \times g$ in some collar neighborhood $[0, \varepsilon) \times \partial V \subset V$ of $\partial V \subset V$, and $G$ satisfies condition (ii) of \Cref{proposition cusps and vector fields}.
\item $\chi(V) \equiv \chi_{+}(g; \sigma) \; (\operatorname{mod} 2)$ (see \Cref{abstract sign Euler characteristic}).
\end{enumerate}
\end{proposition}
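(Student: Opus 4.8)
The plan is to prove both directions by computing the number of cusps of the generic map in two ways: \emph{globally}, from \Cref{cusp invariant}, which yields $\#\{\text{cusps}\}\equiv\chi(V)+\tfrac12\#S(g)\pmod 2$ together with the fact that $\#S(g)$ is even (here $\partial V$ is a closed manifold of odd dimension $n-1$, so $\chi(\partial V)=0$); and \emph{locally}, from condition (ii) of \Cref{proposition cusps and vector fields}, which controls the number of cusps on each component of $S(G)$. Recall that, by the collar hypothesis, $S(G)$ is a compact $1$-manifold with $\partial S(G)=S(g)$; its components are circles and arcs, and the arcs match the points of $S(g)$ into $\tfrac12\#S(g)$ pairs. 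Write $a=\#\{x\in S(g):\sigma(x)=+1\}$ and note $\chi_+(g;\sigma)\equiv a\pmod 2$, since $(-1)^i\equiv 1\pmod 2$.

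The combinatorial input I would isolate first is: for \emph{any} perfect matching of $S(g)$ into pairs, the number of pairs $\{x_0,x_1\}$ with $\sigma(x_0)=\sigma(x_1)$ is $\equiv\tfrac12\#S(g)-a\pmod 2$, independently of the matching. Indeed, if the matching has $p$ pairs of sign $(+,+)$, $q$ of sign $(-,-)$ and $r$ mixed, then $2p+r=a$ forces $r\equiv a$, whence $p+q=\tfrac12\#S(g)-r\equiv\tfrac12\#S(g)-a$. Combined with condition (ii) of \Cref{proposition cusps and vector fields} --- every circle carries evenly many cusps, and an arc with endpoints $x_0,x_1$ carries a number of cusps $\equiv\tfrac12(1+\sigma(x_0)\sigma(x_1))\pmod 2$ --- this shows that \emph{any} $G$ satisfying condition (ii) of \Cref{proposition cusps and vector fields} has $\#\{\text{cusps of }G\}\equiv\tfrac12\#S(g)-a\pmod 2$. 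The implication (i) $\Rightarrow$ (ii) is then immediate: combining with \Cref{cusp invariant} gives $\chi(V)+\tfrac12\#S(g)\equiv\tfrac12\#S(g)-a$, i.e. $\chi(V)\equiv-a\equiv\chi_+(g;\sigma)\pmod 2$.

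For (ii) $\Rightarrow$ (i) I would start from an arbitrary extension $G_0\colon V\to\mathbb{R}^2$ provided by \Cref{corollary generic maps}, call a component of $S(G_0)$ \emph{defective} if its number of cusps does not have the parity prescribed by condition (ii) of \Cref{proposition cusps and vector fields}, and eliminate all defects. The same two counts show that the number of defective components is $\equiv\chi(V)+\chi_+(g;\sigma)\pmod 2$, hence \emph{even} by hypothesis (ii). Pair up the defective components; to kill a pair $\{B,B'\}$, first apply \Cref{lemma cusp number change in even dimensions} to $B$, which reverses the parity of its number of cusps --- this is the step that genuinely uses that $n$ is even, via creation of cusps of absolute index $\tfrac{n}{2}-1$ along fold lines of absolute index $\tfrac{n}{2}$ --- at the price of spawning, inside a small disc disjoint from every other component, finitely many new circle components carrying altogether an odd number of cusps. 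Next, absorb each such circle into $B'$ by creating a matching pair of cusps with one member on the circle and one on $B'$ (\Cref{proposition creation of cusps}, with indices chosen so that the pair is matching, hence removable since $V\setminus\partial V$ is connected for $n\ge 3$) and eliminating it by \Cref{proposition elimination of cusps}; this merges the circle into $B'$ without changing the endpoints of $B'$ when $B'$ is an arc, and since an odd number of the absorbed circles carry an odd number of cusps, the cumulative effect reverses the parity of the number of cusps of $B'$. After processing every pair, the resulting $G$ has every component of $S(G)$ carrying the prescribed number of cusps modulo $2$, i.e. it satisfies condition (ii) of \Cref{proposition cusps and vector fields}, which is (i).

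I expect the bookkeeping in the direction (ii) $\Rightarrow$ (i) to be the main obstacle: one must track precisely how \Cref{lemma cusp number change in even dimensions} alters the component structure of the singular set and verify that, after cancelling each pair of defective components, the matching of $S(g)$ by the arc components of $S(G)$ and the cusp parities of the remaining components are intact --- the global congruence of \Cref{cusp invariant} being exactly the consistency statement that makes this possible. The case $n=2$ requires separate care, since there the above modifications must be carried out for generic maps between surfaces.
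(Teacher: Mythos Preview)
Your argument for (i)$\Rightarrow$(ii) is correct and is exactly the paper's computation: the paper packages the same count as \Cref{modulo two formula}, namely $\chi(V)-\chi_{+}(g;\sigma)\equiv\sum_{S}[\#(\Sigma\cap S)+\tfrac12\sum_{x\in\partial S}\sigma(x)]\pmod 2$, and reads off the conclusion when each summand vanishes.

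For (ii)$\Rightarrow$(i) your strategy is sound but organised differently from the paper. The paper proceeds in two disjoint steps: first it repairs every \emph{arc} component individually via \Cref{lemma cusp number change in even dimensions} (accepting that this spawns new circles), and then, knowing by \Cref{modulo two formula} that the bad circles are now even in number, it pairs them and eliminates one cusp of absolute index $(n-2)/2$ from each pair directly by \Cref{proposition elimination of cusps} (for $n>2$; for $n=2$ it invokes the singular-pattern result of \cite{wra2}). Your scheme --- pair all defective components, fix one member via \Cref{lemma cusp number change in even dimensions}, then merge the spawned circles into the partner --- also works, but the ``absorption'' step is not a direct citation: it is the content of \Cref{lemma surgery of singular components}, which in the paper is stated only for $n$ odd. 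The technique carries over to even $n$ once you ensure fold points of absolute index $n/2$ on both components (achievable by preliminary swallowtail creations as in the first paragraph of the proof of \Cref{lemma cusp number change in even dimensions}), so that the freshly created cusps have absolute index $(n-2)/2$ and hence form a matching pair across components. You should make this explicit. The paper's route avoids this extra merging entirely by observing (via Levine's index corollary) that any circle carrying an odd number of cusps already contains a cusp of absolute index $(n-2)/2$, so bad circles can be cancelled pairwise without first being tethered to another component. Both approaches leave $n=2$ to a separate argument, as you note.
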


\begin{proof}
Consider a map $G \colon V \rightarrow \mathbb{R}^{2}$ such that $G|_{V \setminus \partial V}$ is generic, and $G|_{[0, \varepsilon) \times \partial V} = \operatorname{id}_{[0, \varepsilon)} \times g$ in some collar neighborhood $[0, \varepsilon) \times \partial V \subset V$ of $\partial V \subset V$.
Let $\Sigma \subset S(G)$ denote the set of cusps of $G$.
Note that condition (ii) of \Cref{proposition cusps and vector fields} is equivalent to the requirement that every component $S$ of $S(G)$ satisfies
\begin{align}\label{equation summand}
\# (\Sigma \cap S) + \frac{1}{2} \sum_{x \in \partial S} \sigma(x) \equiv 0 \quad (\operatorname{mod} 2).
\end{align}
Writing $\mathcal{S}(G)$ for the set of components of $S(G)$, we conclude from \Cref{cusp invariant} that
\begin{align}\label{modulo two formula}
\chi(V) - \chi_{+}(g; \sigma)
\equiv \sum_{S \in \mathcal{S}(G)} \Bigg[\# (\Sigma \cap S) + \frac{1}{2} \sum_{x \in \partial S} \sigma(x)\Bigg] \quad (\operatorname{mod} 2).
\end{align}

(i) $\Rightarrow$ (ii).
If we choose $G$ to satisfy condition (ii) of \Cref{proposition cusps and vector fields}, then every component $S$ of $S(G)$ satisfies \Cref{equation summand}, and statement (ii) follows immediately from \Cref{modulo two formula}.

(ii) $\Rightarrow$ (i).
Choose a collar neighborhood $[0, \infty) \times \partial V \subset V$ of $\partial V \subset V$.
By \Cref{corollary generic maps}, we may extend the $\{0\} \times \partial V$-germ of $\operatorname{id}_{[0, \infty)} \times g$ to a map $G_{0} \colon V \rightarrow \mathbb{R}^{2}$ such that $G_{0}|_{V \setminus \partial V}$ is generic, and $G_{0}|_{[0, \varepsilon) \times \partial V} = \operatorname{id}_{[0, \varepsilon)} \times g$ in some collar neighborhood $[0, \varepsilon) \times \partial V \subset V$ of $\partial V \subset V$.
Assuming that statement (ii) holds, we proceed in two steps as follows to modify $G_{0}$ on a compact subset of $V \setminus \partial V$ in such a way that the modified map $G \colon V \rightarrow \mathbb{R}^{2}$ has the desired properties.

\textit{Step 1.} In the first step, we modify $G_{0}$ on a compact subset of $V \setminus \partial V$ in such a way that for the modified map $G_{1} \colon V \rightarrow \mathbb{R}^{2}$, the restriction $G_{1}|_{V \setminus \partial V}$ is generic, and every component $S$ of $S(G_{1})$ which is diffeomorphic to $[0, 1]$ satisfies \Cref{equation summand}.
For this purpose, let $\mathcal{S}_{[0, 1]}$ denote the set of components of $S(G_{0})$ which are diffeomorphic to $[0, 1]$, but do not satisfy \Cref{equation summand}.
For every $S \in \mathcal{S}_{[0, 1]}$ we choose an open neighborhood $X_{S} \subset V$ of $S$ such that $X_{S} \cap X_{S'} = \emptyset$ whenever $S \neq S'$ in $\mathcal{S}_{[0, 1]}$.
Noting that $n \geq 2$ is even, we apply \Cref{lemma cusp number change in even dimensions} for every $S \in \mathcal{S}_{[0, 1]}$ to the manifold $X = X_{S} \setminus \partial V$ (without boundary), to the generic map $G_{0}|_{X_{S} \setminus \partial V}$, to the singular component $C_{0} = S \setminus \partial V$, and to some open subset $U \subset X$ with compact closure in $X$ such that $U \cap S(G_{0}) = U \cap C_{0} \neq \emptyset$.
As a result, we obtain for every $S \in \mathcal{S}_{[0, 1]}$ a generic map $G_{S} \colon X_{S} \setminus \partial V \rightarrow \mathbb{R}^{2}$, a compact subset $K_{S} \subset X_{S} \setminus \partial V$ such that $G_{S}|_{X_{S} \setminus (K_{S} \cup \partial V)} = G_{0}|_{X_{S} \setminus (K_{S} \cup \partial V)}$, and a component $C_{S}$ of $S(G_{S})$ with the properties that $C_{S} \setminus K_{S} = S \setminus K_{S}$, and that the number of cusps of $G_{S}$ lying on $C_{S}$ has not the same parity as the number of cusps of $G_{0}$ lying on $S$.
The map $G_{1} \colon V \rightarrow \mathbb{R}^{2}$ given by $G_{1}|_{V \setminus \bigcup_{S \in \mathcal{S}_{[0, 1]}} K_{S}} = G_{0}|_{V \setminus \bigcup_{S \in \mathcal{S}_{[0, 1]}} K_{S}}$ and $G_{1}|_{X_{S} \setminus \partial V} = G_{S}$ for every $S \in \mathcal{S}_{[0, 1]}$ then has the desired properties.

\textit{Step 2.}
In the second step, we modify $G_{1}$ on a compact subset of $V \setminus \partial V$ in such a way that for the modified map $G \colon V \rightarrow \mathbb{R}^{2}$, the restriction $G|_{V \setminus \partial V}$ is generic, and every component $S$ of $S(G)$ satisfies \Cref{equation summand}, that is, $G$ satisfies condition (ii) of \Cref{proposition cusps and vector fields}.
We recall from the previous step that \Cref{equation summand} is satisfied by every component $S$ of $S(G_{1})$ that is diffeomorphic to $[0, 1]$.
Consequently, using statement (ii) and \Cref{modulo two formula}, we conclude that there is an even number of cusps of $G_{1}$ lying on components of $S(G_{1})$ which are diffeomorphic to the circle.
For the desired modification of $G_{1}$ we distinguish between the following two cases.
\begin{itemize}
\item The case $n=2$:
We apply the method of singular patterns as developed in \cite{wra2}.
Let $W$ be the connected compact $2$-manifold obtained by removing from $V$ suitable small open disk neighborhoods of those cusps of $G_{1}$ which lie on components of $S(G_{1})$ that are diffeomorphic to $[0, 1]$.
The $\partial W$-germ of $\widetilde{G}_{1} = G_{1} \cup \operatorname{id}_{(-\varepsilon, 0]} \times g$ on $\widetilde{V} = V \cup (-\varepsilon, 0] \times \partial V$ and the singular set $S(G_{1}|_{W})$ then induce a \emph{singular pattern} $(f, \varphi)$ on $W$ in the sense of Definition 5.1 in \cite{wra2}.
That is, $f \colon (-\varepsilon, \varepsilon) \times \partial W \rightarrow \mathbb{R}^{2}$, $\varepsilon > 0$, is a fold map whose singular locus $S(f) \subset (-\varepsilon, \varepsilon) \times \partial W$ is transverse to $\{0\} \times \partial W$, and $\varphi$ is a partition of the finite set $S(f) \cap \{0\} \times \partial W$ into subsets of cardinality two.
Namely, we take $f$ to be the restriction of $\widetilde{G}_{1}$ to a tubular neighborhood $(-\varepsilon, \varepsilon) \times \partial W \subset \widetilde{V}$ of $\partial W = \{0\} \times \partial W$ in $\widetilde{V}$, and $\varphi$ to be the partition consisting of the sets of the form $\partial S$, where $S$ runs through the components of $S(G_{1}|_{W})$ that are diffeomorphic to $[0, 1]$.
Then, by construction, $G_{1}|_{W}$ is a realization of the singular pattern $(f, \varphi)$ on $W$ in the sense of Definition 5.2 in \cite{wra2}.
Furthermore, the number of cusps of $G_{1}|_{W}$ is even.
Hence, by Theorem 1.1 in \cite{wra2}, $G_{1}|_{W}$ can be modified on a compact subset of $W \setminus \partial W$ to a fold map $H \colon W \rightarrow \mathbb{R}^{2}$ that realizes the pattern $(f, \varphi)$ on $W$.
Finally, the map $G \colon V \rightarrow \mathbb{R}^{2}$ defined by $G|_{W} = H$ and $G|_{V \setminus W} = G_{1}|_{V \setminus W}$ is a modification of $G_{1}$ on a compact subset of $V \setminus \partial V$ in such a way that the restriction $G|_{V \setminus \partial V}$ is generic.
Moreover, it follows from the construction that every component $S$ of $S(G)$ satisfies \Cref{equation summand}.
(Note that the components of $S(G)$ which are diffeomorphic to the circle do not contain any cusps of $G$.)
Consequently, $G$ satisfies condition (ii) of \Cref{proposition cusps and vector fields}.

\item The case $n>2$:
Let $\mathcal{S}_{S^{1}}$ denote the set of components of $S(G_{1})$ which are diffeomorphic to the circle, but do not satisfy \Cref{equation summand}, which means that they contain an odd number of cusps of $G_{1}$.
According to the corollary of Section (3.2) in \cite[p. 275]{lev}, each component $S \in \mathcal{S}_{S^{1}}$ contains at least one cusp of $G_{1}$ absolute index $(n-2)/2$, say $c_{S} \in S$.
As the cardinality of $\mathcal{S}_{S^{1}}$ is even by assumption, we may fix a partition $\mathcal{P}$ of $\mathcal{S}_{S^{1}}$ into sets of cardinality two.
Note that for every element $\{S, S'\} \in \mathcal{P}$ the pair of cusps $(c_{S}, c_{S'})$ is a matching pair in the sense of \Cref{definition matching pair}.
Since the manifold $V^{n}$ is connected and of dimension $n > 2$, the pair $(c_{S}, c_{S'})$ is also removable, which means that we can apply \Cref{proposition elimination of cusps} to modify $G_{1}$ in a small neighborhood of some joining curve.
More precisely, we choose for every element $\mathfrak{s} = \{S, S'\} \in \mathcal{P}$ a joining curve $\lambda_{\mathfrak{s}} \colon [0, 1] \rightarrow V \setminus \partial V$ for $(c_{S}, c_{S'})$, and a small neighborhood $U_{\mathfrak{s}} \subset V \setminus \partial V$ of $S \cup \lambda_{\mathfrak{s}}([0, 1]) \cup S'$ such that $U_{\mathfrak{s}} \cap S(G_{1}) = S \sqcup S'$ for all $\mathfrak{s} \in \mathcal{P}$, and $U_{\mathfrak{s}} \cap U_{\mathfrak{s}'} = \emptyset$ for $\mathfrak{s} \neq \mathfrak{s}'$ in $\mathcal{P}$.
Then, \Cref{proposition elimination of cusps} yields for every $\mathfrak{s} \in \mathcal{P}$ a generic map $G_{\mathfrak{s}} \colon U_{\mathfrak{s}} \rightarrow \mathbb{R}^{2}$ such that $G_{\mathfrak{s}}|_{U_{\mathfrak{s}} \setminus K_{\mathfrak{s}}} = G_{1}|_{U_{\mathfrak{s}} \setminus K_{\mathfrak{s}}}$ for some compact subset $K_{\mathfrak{s}} \subset U_{\mathfrak{s}}$, and $S(G_{\mathfrak{s}})$ is connected and contains an even number of cusps.
Finally, the map $G \colon V \rightarrow \mathbb{R}^{2}$ defined by $G|_{V \setminus \bigcup_{\mathfrak{s} \in \mathcal{P}} K_{\mathfrak{s}}} = G_{1}|_{V \setminus \bigcup_{\mathfrak{s} \in \mathcal{P}} K_{\mathfrak{s}}}$ and $G|_{U_{\mathfrak{s}}} = G_{\mathfrak{s}}$ for every $\mathfrak{s} \in \mathcal{P}$ is a modification of $G_{1}$ on a compact subset of $V \setminus \partial V$ in such a way that the restriction $G|_{V \setminus \partial V}$ is generic.
Moreover, it follows from the construction that every component $S$ of $S(G)$ satisfies \Cref{equation summand}, that is, $G$ satisfies condition (ii) of \Cref{proposition cusps and vector fields}.
\end{itemize}
Since the map $G$ obtained from the above construction has the desired properties, the proof of \Cref{proposition condition n even} is complete.
\end{proof}

\begin{proposition}\label{proposition condition n odd}
Let $V^{n}$ be a connected compact manifold possibly with boundary of dimension $n \geq 2$.
Fix a Morse function $g \colon \partial V \rightarrow \mathbb{R}$ and a map $\sigma \colon S(g) \rightarrow \{\pm 1\}$.
If $n$ is odd, then the following statements are equivalent:
\begin{enumerate}[(i)]
\item There exists a map $G \colon V \rightarrow \mathbb{R}^{2}$ such that $G|_{V \setminus \partial V}$ is generic, $G|_{[0, \varepsilon) \times \partial V} = \operatorname{id}_{[0, \varepsilon)} \times g$ in some collar neighborhood $[0, \varepsilon) \times \partial V \subset V$ of $\partial V \subset V$, and $G$ satisfies condition (ii) of \Cref{proposition cusps and vector fields}.
\item $\frac{\chi(\partial V)}{2} = \chi_{+}(g; \sigma)$ (see \Cref{abstract sign Euler characteristic}).
\end{enumerate}
\end{proposition}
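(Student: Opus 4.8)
The plan is to reduce condition (ii) of \Cref{proposition cusps and vector fields} for a generic extension $G$ of $\operatorname{id}_{[0, \varepsilon)} \times g$ to a purely combinatorial matching condition, exploiting that in odd dimensions the parity of the number of cusps on each component of $S(G)$ is forced by the Morse indices of $g$ at the relevant endpoints. First I would note that near a critical point $x \in S^{i}(g)$ the map $G$ is in fold normal form, so the fold arc of $S(G)$ running from $x$ into $V \setminus \partial V$ has absolute index $\operatorname{max}\{i, n-1-i\}$; and since $n$ is odd, $\frac{n}{2}-1 \notin \mathbb{Z}$, so \Cref{remark index} forces the two fold lines abutting any cusp of $G$ to differ in absolute index by exactly $1$. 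Tracking the absolute index along a component of $S(G)$ — locally constant on fold arcs, jumping by $\pm 1$ at cusps — then shows at once that every circle component of $S(G)$ carries an even number of cusps, and that an interval component with endpoints $x_{0} \in S^{i_{0}}(g)$, $x_{1} \in S^{i_{1}}(g)$ carries a number of cusps congruent to $\operatorname{max}\{i_{0}, n-1-i_{0}\} + \operatorname{max}\{i_{1}, n-1-i_{1}\} \equiv i_{0} + i_{1} \pmod{2}$ (using that $n-1$ is even). Comparing with \Cref{proposition cusps and vector fields}(ii), which asks that the number of cusps on such an interval component be even precisely when $\sigma(x_{0}) \neq \sigma(x_{1})$, this yields the key reformulation: $G$ satisfies \Cref{proposition cusps and vector fields}(ii) if and only if every interval component of $S(G)$ joins two critical points $x_{0}, x_{1}$ of $g$ with $(-1)^{i_{0}} \sigma(x_{0}) \neq (-1)^{i_{1}} \sigma(x_{1})$.

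With this reformulation in hand, the implication (i) $\Rightarrow$ (ii) is immediate. The interval components of $S(G)$ partition $S(g) = S(G) \cap \partial V$ into pairs, since each critical point of $g$ is an endpoint of exactly one interval component; if $G$ realizes \Cref{proposition cusps and vector fields}(ii), then each such pair $\{x_{0}, x_{1}\}$ satisfies $(-1)^{i_{0}}\sigma(x_{0}) \neq (-1)^{i_{1}}\sigma(x_{1})$, whence $\sum_{i=0}^{n-1} (-1)^{i} \sum_{x \in S^{i}(g)} \sigma(x) = \sum_{x \in S(g)} (-1)^{i(x)}\sigma(x) = 0$, which by \Cref{helpful expression} (with $P = \partial V$) is exactly $\frac{\chi(\partial V)}{2} = \chi_{+}(g; \sigma)$.

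For (ii) $\Rightarrow$ (i), I would run \Cref{helpful expression} in the other direction to rewrite the hypothesis as $\#\{x \in S(g) : (-1)^{i(x)}\sigma(x) = 1\} = \#\{x \in S(g) : (-1)^{i(x)}\sigma(x) = -1\}$, so that a partition $\mu$ of $S(g)$ into pairs $\{x_{0}, x_{1}\}$ with $(-1)^{i_{0}}\sigma(x_{0}) \neq (-1)^{i_{1}}\sigma(x_{1})$ exists; by the reformulation it then suffices to construct a generic extension $G$ of $\operatorname{id}_{[0,\varepsilon)} \times g$ whose interval components realize $\mu$. I would start from an arbitrary such extension $G_{0}$ (which exists by \Cref{corollary generic maps}); its interval components realize some pairing $\mu_{0}$ of $S(g)$, and since any two pairings of a finite set differ by a sequence of elementary transpositions replacing two pairs $\{a,b\}, \{c,d\}$ by $\{a,c\}, \{b,d\}$, it is enough to realize one such transposition by a modification supported in a compact subset of $V \setminus \partial V$. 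Here \Cref{lemma surgery of singular components} enters (note $n > 2$ is odd and $V \setminus \partial V$ is connected): applied to the interval components of $S(G_{0})$ with endpoint sets $\{a,b\}$ and $\{c,d\}$, with $x^{(0)} = a$ and $x^{(1)} = c$ and $U$ a thickened generic arc joining them inside $V\setminus\partial V$ avoiding $\{a,b,c,d\}$, it yields a generic map in which $a$ and $c$ lie on a common component; that component contains the boundary points $a, c \in S(g)$, hence is an interval with endpoint set exactly $\{a,c\}$, and since the number of interval components, $\frac{1}{2}\#S(g)$, is preserved and everything outside $U$ is untouched, $b$ and $d$ are forced onto a single remaining interval component, necessarily with endpoint set $\{b,d\}$. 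Iterating transforms $\mu_{0}$ into $\mu$, completing the construction.

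The step I expect to be the main obstacle is this last one: verifying that the surgery of \Cref{lemma surgery of singular components} genuinely implements the transposition $\{a,b\}, \{c,d\} \mapsto \{a,c\}, \{b,d\}$, i.e.\ that after the cusp creations and the matching-pair elimination the two interval components are replaced by exactly two new interval components (rather than by a circle together with an interval, or some invalid $1$-manifold). This rests on a careful analysis of how the fold arcs reconnect in Levine's cusp elimination (cf.\ \Cref{figure levine cusp elimination}(b) and \Cref{figure local component}) along the chosen joining curve, together with the elementary observation that $\{a,c\},\{b,d\}$ and $\{a,d\},\{b,c\}$ are the only two transpositions of $\{a,b\},\{c,d\}$, both admissible and reached by the appropriate choice of $x^{(1)}$.
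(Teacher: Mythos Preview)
Your proposal is correct and follows essentially the same route as the paper: the same reformulation of \Cref{proposition cusps and vector fields}(ii) via \Cref{remark index} (odd $n$ forces the absolute index to jump by exactly $1$ at every cusp, so circle components carry an even number of cusps and interval components satisfy $\sum_{x\in\partial S}(-1)^{\mu(x)}\sigma(x)=0$), the same use of \Cref{helpful expression} for (i)$\Rightarrow$(ii), and for (ii)$\Rightarrow$(i) the same strategy of choosing a good pairing of $S(g)$ and realizing it via \Cref{lemma surgery of singular components}. The only difference is organizational---you reach the target pairing by a sequence of elementary transpositions, whereas the paper performs all the required surgeries simultaneously in pairwise disjoint neighbourhoods $U_{\mathfrak{p}}$---and the reconnection check you flag as the main obstacle is handled just as tersely in the paper (``it follows from the construction that every $\mathfrak{p}\in\mathcal{P}_{\partial}$ is of the form $\partial S$'').
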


\begin{proof}
Consider a map $G \colon V \rightarrow \mathbb{R}^{2}$ such that $G|_{V \setminus \partial V}$ is generic, and $G|_{[0, \varepsilon) \times \partial V} = \operatorname{id}_{[0, \varepsilon)} \times g$ in some collar neighborhood $[0, \varepsilon) \times \partial V \subset V$ of $\partial V \subset V$.
Let $\Sigma \subset S(G)$ denote the set of cusps of $G$.
We observe that condition (ii) of \Cref{proposition cusps and vector fields} is equivalent to the requirement that every component $S$ of $S(G)$ satisfies
\begin{align}\label{equation summand odd dimension}
\sum_{x \in \partial S} (-1)^{\mu(x)} \cdot \sigma(x) = 0,
\end{align}
where $\mu(x)$ denotes the Morse index of the critical point $x \in \partial S(G) = S(g)$.
(In order to prove this claim, suppose first that $S$ is a component of $S(G)$ which is diffeomorphic to the circle.
Then, as $n$ is odd, it follows from \Cref{remark index} (see also the corollary of Section (3.2) in \cite[p. 275]{lev}) that $S$ contains an even number of cusps, that is, $S$ satisfies condition (ii) of \Cref{proposition cusps and vector fields}.
Moreover, \Cref{equation summand odd dimension} is clearly satisfied for $S$.
Thus, our claim holds for every component $S$ of $S(G)$ which is diffeomorphic to the circle.
Next, suppose that $S$ is a component of $S(G)$ which is diffeomorphic to the interval $[0, 1]$, and let $x_{0}$ and $x_{1}$ denote the endpoints of $S$.
Then, we observe that the number of cusps of $G$ that lie on $S$ is even if and only if the absolute indices $\operatorname{max}\{\mu(x_{0}), n-1-\mu(x_{0})\}$ of $x_{0}$ and $\operatorname{max}\{\mu(x_{1}), n-1-\mu(x_{1})\}$ of $x_{1}$ have the same parity, that is, $\mu(x_{0}) \equiv \mu(x_{1}) \; (\operatorname{mod} 2)$ (because $n$ is odd).
The claim follows immediately from this observation.)

Writing $\mathcal{S}(G)$ for the set of components of $S(G)$, we rewrite \Cref{helpful expression} as
\begin{align}\label{integral formula}
\frac{\chi(\partial V)}{2} - \chi_{+}(g; \sigma) = - \frac{1}{2} \sum_{S \in \mathcal{S}(G)}\Bigg[\sum_{x \in \partial S} (-1)^{\mu(x)} \cdot \sigma(x)\Bigg].
\end{align}

(i) $\Rightarrow$ (ii).
If we choose $G$ to satisfy condition (ii) of \Cref{proposition cusps and vector fields}, then every component $S$ of $S(G)$ satisfies \Cref{equation summand odd dimension}, and statement (ii) follows immediately from \Cref{integral formula}.

(ii) $\Rightarrow$ (i).
Choose a collar neighborhood $[0, \infty) \times \partial V \subset V$ of $\partial V \subset V$.
By \Cref{corollary generic maps}, we may extend the $\{0\} \times \partial V$-germ of $\operatorname{id}_{[0, \infty)} \times g$ to a map $G_{0} \colon V \rightarrow \mathbb{R}^{2}$ such that $G_{0}|_{V \setminus \partial V}$ is generic, and $G_{0}|_{[0, \varepsilon) \times \partial V} = \operatorname{id}_{[0, \varepsilon)} \times g$ in some collar neighborhood $[0, \varepsilon) \times \partial V \subset V$ of $\partial V \subset V$.
Assuming that statement (ii) holds, \Cref{integral formula} implies that we may choose a partition $\mathcal{P}$ of $S(g)$ such that for every element $\{x_{0}, x_{1}\} \in \mathcal{P}$ we have
$$
\sum_{x \in \{x_{0}, x_{1}\}} (-1)^{\mu(x)} \cdot \sigma(x) = 0.
$$
We proceed as follows to modify $G_{0}$ on a compact subset of $V \setminus \partial V$ in such a way that the modified map $G \colon V \rightarrow \mathbb{R}^{2}$ has the desired properties, namely that the restriction $G|_{V \setminus \partial V}$ is generic, and that every component $S$ of $S(G)$ satisfies \Cref{equation summand odd dimension}.
Let $\mathcal{P}_{\partial}$ denote the set of elements $\mathfrak{p} = \{x_{0}, x_{1}\} \in \mathcal{P}$ which are not of the form $\mathfrak{p} = \partial S$ for some component $S$ of $S(G_{0})$.
Since the manifold $V$ is connected and of dimension $n > 2$, we can choose for every element $\mathfrak{p} = \{x_{0}, x_{1}\} \in \mathcal{P}_{\partial}$ a connected open subset $U_{\mathfrak{p}} \subset V \setminus \partial V$ whose closure in $V \setminus \partial V$ is compact, and such that $U_{\mathfrak{p}} \cap S(G_{1}) \subset S_{0} \cup S_{1}$, and $U_{\mathfrak{p}} \cap S_{0}$ and $U_{\mathfrak{p}} \cap S_{1}$ are connected, where $S_{0}$ and $S_{1}$ denote the components of $S(G_{0})$ that contain $x_{0}$ and $x_{1}$, respectively.
Moreover, we may assume that $U_{\mathfrak{p}} \cap U_{\mathfrak{p}'} = \emptyset$ whenever $\mathfrak{p} \neq \mathfrak{p}'$ in $\mathcal{P}_{\partial}$.
We apply \Cref{lemma surgery of singular components} for every $\mathfrak{p} = \{x_{0}, x_{1}\} \in \mathcal{P}_{\partial}$ to the manifold $X = U_{\mathfrak{p}}$ (without boundary), to the generic map $G_{0}|_{U_{\mathfrak{p}}}$, to the components $C_{0}^{(0)} = U_{\mathfrak{p}} \cap S_{0}$ and $C_{0}^{(1)} = U_{\mathfrak{p}} \cap S_{1}$ of $S(G_{0}|_{X})$, to some connected open subset $U \subset X$ with compact closure in $X$ such that $U \cap C_{0}^{(0)} \neq \emptyset$ and $U \cap C_{0}^{(1)} \neq \emptyset$, and to some points $x^{(0)} \in C_{0}^{(0)} \setminus U$ and $x^{(1)} \in C_{0}^{(1)} \setminus U'$, where $x^{(0)}$ lies on the same component of $S_{0} \setminus U$ as $x_{0}$, and $x^{(1)}$ lies on the same component of $S_{1} \setminus U$ as $x_{1}$.
As a result, we obtain for every $\mathfrak{p} \in \mathcal{P}_{\partial}$ a generic map $G_{\mathfrak{p}} \colon U_{\mathfrak{p}} \rightarrow \mathbb{R}^{2}$ with the following properties.
We have $G_{\mathfrak{p}}|_{U_{\mathfrak{p}} \setminus K_{\mathfrak{p}}} = G_{0}|_{U_{\mathfrak{p}} \setminus K_{\mathfrak{p}}}$ for some compact subset $K_{\mathfrak{p}} \subset U_{\mathfrak{p}}$, and $S(G_{\mathfrak{p}})$ contains a component that contains the points $x^{(0)}$ and $x^{(1)}$.
Finally, the map $G \colon V \rightarrow \mathbb{R}^{2}$ defined by $G|_{V \setminus \bigcup_{\mathfrak{p} \in \mathcal{P}_{\partial}} K_{\mathfrak{p}}} = G_{0}|_{V \setminus \bigcup_{\mathfrak{p} \in \mathcal{P}_{\partial}} K_{\mathfrak{p}}}$ and $G|_{U_{\mathfrak{p}}} = G_{\mathfrak{p}}$ for every $\mathfrak{p} \in \mathcal{P}_{\partial}$ is a modification of $G_{0}$ on a compact subset of $V \setminus \partial V$ in such a way that the restriction $G|_{V \setminus \partial V}$ is generic.
Moreover, it follows from the construction that every $\mathfrak{p} \in \mathcal{P}_{\partial}$ is of the form $\mathfrak{p} = \partial S$ for some component $S$ of $S(G)$.
Hence, it follows that every component $S$ of $S(G)$ satisfies \Cref{equation summand odd dimension}.

This completes the proof of \Cref{proposition condition n odd}.
\end{proof}

\section{Proof of \Cref{main theorem}}\label{proof of main theorem}

In \Cref{proposition existence of homomorphism} and \Cref{proposition existence of homomorphism for n odd} we discuss the construction of the homomorphisms that appear in \Cref{main theorem}.
Afterwards, we show that they are surjective and injective.

\begin{lemma}\label{lemma euler characteristic}
\begin{enumerate}[(a)]
\item If $X$ is an odd-dimensional compact manifold possibly with boundary, then $\chi(X) = \frac{\chi(\partial X)}{2}$.
\item If $X_{0}$ and $X_{1}$ are compact even-dimensional manifolds with (possibly empty) common boundary $Y = \partial X_{0} = \partial X_{1}$ such that $X_{0} \cup_{Y} X_{1}$ is nullcobordant, then $\chi(X_{0}) \equiv \chi(X_{1}) \; (\operatorname{mod} 2)$.
\end{enumerate}
\end{lemma}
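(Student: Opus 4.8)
The plan is to reduce both statements to two classical facts: the inclusion--exclusion formula $\chi(A \cup B) = \chi(A) + \chi(B) - \chi(A \cap B)$ for finite CW complexes, and the vanishing of the Euler characteristic of a closed manifold of odd dimension (which follows from Poincar\'e duality, e.g.\ with $\mathbb{Z}/2$ coefficients, so that no orientability hypothesis is needed). Neither part requires any singularity theory.

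For part (a), I would form the double $DX = X \cup_{\partial X} X$, obtained by gluing two copies of $X$ along the identity map of $\partial X$; after smoothing a collar, $DX$ is a closed manifold of dimension $\dim X$, which is odd by assumption, so $\chi(DX) = 0$. Applying additivity of the Euler characteristic to the decomposition $DX = X \cup_{\partial X} X$ (whose ``intersection'' is $\partial X$) gives $0 = \chi(DX) = 2\chi(X) - \chi(\partial X)$, hence $\chi(X) = \chi(\partial X)/2$.

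For part (b), set $n = \dim X_0 = \dim X_1$, which is even, and $M = X_0 \cup_Y X_1$; after smoothing, $M$ is a closed $n$-manifold. Since $Y$ is a closed manifold of odd dimension $n-1$ (or is empty), we have $\chi(Y) = 0$, so additivity of the Euler characteristic gives $\chi(M) = \chi(X_0) + \chi(X_1)$. On the other hand, by hypothesis $M$ bounds a compact $(n+1)$-manifold $W$; as $n+1$ is odd, part (a) applied to $W$ yields $\chi(M) = \chi(\partial W) = 2\chi(W)$, which is even. Combining the two identities gives $\chi(X_0) + \chi(X_1) \equiv 0 \pmod 2$, and since $-\chi(X_1) \equiv \chi(X_1) \pmod 2$ this is exactly $\chi(X_0) \equiv \chi(X_1) \pmod 2$.

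The only points that require a little care --- and they are the ``main obstacle'' only in the sense of being the spots where one should invoke a reference rather than compute --- are that gluing manifolds with boundary along a common boundary (or along collars) produces an honest closed smooth manifold with the expected homotopy type, so that $\chi$ is defined and additive, and that the vanishing of $\chi$ in odd dimensions holds without orientability assumptions. I would cite a standard differential topology text for the first and $\mathbb{Z}/2$-Poincar\'e duality for the second, and otherwise the argument is as above.
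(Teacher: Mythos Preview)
Your proof is correct and follows essentially the same approach as the paper: both parts use the inclusion--exclusion formula for $\chi$ together with the vanishing of $\chi$ on closed odd-dimensional manifolds, with part (a) handled via the double and part (b) by applying part (a) to the nullcobordism of $X_0 \cup_Y X_1$ to see that its Euler characteristic is even. The paper's write-up is terser but the argument is identical.
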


\begin{proof}
(a).
As the double $X \cup_{\partial X} X$ is a closed odd-dimensional manifold, we have
$$
0 = \chi(X \cup_{\partial X} X) = 2 \chi(X) - \chi(\partial X).
$$

(b).
As $Y$ is a closed odd-dimensional manifold, we have
$$
0 = \chi(Y) = \chi(X_{0}) + \chi(X_{1}) - \chi(X_{0} \cup_{Y} X_{1}),
$$
where $\chi(X_{0} \cup_{Y} X_{1})$ is even by part (a).
\end{proof}

\begin{proposition}\label{proposition existence of homomorphism}
If $n \geq 2$ is even, then there exists a homomorphism
\begin{align*}
\mathcal{C}_{n} \rightarrow \mathbb{Z}/2, \qquad [f \colon M \rightarrow \mathbb{R}] \mapsto \chi(M) - \chi_{+}[f] \; (\operatorname{mod} 2).
\end{align*}
\end{proposition}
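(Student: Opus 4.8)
The plan is to check that $[f\colon M\to\mathbb{R}]\mapsto\chi(M)-\chi_+[f]\bmod2$ is constant on cusp cobordism classes; additivity under disjoint union and the vanishing on $[f_\emptyset]$ are immediate, so this yields the homomorphism. First I would reduce to the nullcobordant case. Since $[-f\colon-M\to\mathbb{R}]$ is the inverse of $[f]$ and $\chi(-M)-\chi_+[-f]\equiv\chi(M)-\chi_+[f]\pmod2$ --- the latter because $S(f|_{\partial M})$ has even cardinality by \Cref{cusp invariant} applied to a generic extension of $f|_{\partial M}$ furnished by \Cref{corollary generic maps}, so that $\chi_+[-f]\equiv\#S_+[-f]=\#S(f|_{\partial M})-\#S_+[f]\equiv\#S_+[f]\equiv\chi_+[f]$ --- it suffices to prove: if $f\colon M\to\mathbb{R}$ is cusp cobordant to $f_\emptyset$, then $\chi(M)\equiv\chi_+[f]\pmod2$.

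So let $(W,V)$ with $F\colon W\to[0,1]\times\mathbb{R}$ be such a nullcobordism. Here $\partial W=M\cup_{\partial M}V$, and $F|_V\colon V\to\mathbb{R}^2$ restricts to a generic map on $V\setminus\partial V$ that equals $\operatorname{id}_{[0,\varepsilon)}\times g$ in a collar of $\partial V=\partial M$, where $g=f|_{\partial M}$; in particular $S(g)=S(F|_V)\cap\partial V$. The core of the argument is to exhibit $F|_V$ as a valid input for \Cref{proposition condition n even}, for which I would construct the map $v\colon S(F|_V)\to\mathbb{R}^2$ demanded by condition (i) of \Cref{proposition cusps and vector fields} (relative to $g$ and $\sigma_f$). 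Fix a smooth inward normal field $\nu$ along $V\subset W$ and set $v(x)=dF_x(\nu_x)$. Condition (ii) of \Cref{definition cusp cobordism} makes $F$ a submersion at every point of $V\setminus\partial V$, while $d(F|_V)_x$ has rank at most $1$ on $S(F|_V)$; hence $v(x)\notin d(F|_V)_x(T_xV)$ for all $x\in S(F|_V)$. Near $\partial V$ the product form $F|_{[0,\varepsilon)\times M}=\operatorname{id}\times f$ shows that $v(x)$ lies in $\{0\}\times\mathbb{R}$ and, directly from the definition of $\sigma_f$ in (\ref{sign function}), satisfies $\sigma_f(y)\cdot v(x)\in\{0\}\times(0,\infty)$ for $x=(s,y)$ near $\partial V$.

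With $v$ in hand, \Cref{proposition cusps and vector fields} shows that $F|_V$ satisfies condition (ii) of that proposition with respect to $\sigma_f$. Applying \Cref{proposition condition n even} componentwise to $V$ (with the pair $(g,\sigma_f)$) then gives $\chi(V)\equiv\chi_+(g;\sigma_f)=\chi_+[f]\pmod2$. Finally, $M$ and $V$ are compact even-dimensional manifolds with common boundary $\partial M=\partial V$ such that $M\cup_{\partial M}V=\partial W$ is nullcobordant, so \Cref{lemma euler characteristic}(b) yields $\chi(M)\equiv\chi(V)\pmod2$. Combining the two congruences gives $\chi(M)-\chi_+[f]\equiv0\pmod2$, as desired.

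I expect the main obstacle to be the construction and boundary analysis of $v$: one must verify that $v=dF(\nu)$ extends smoothly across the endpoints $S(g)\subset\partial V$ with the correct orientation relative to $\sigma_f$, so that $F|_V$ genuinely satisfies the hypotheses of \Cref{proposition cusps and vector fields}. A related point worth flagging is that one could instead argue directly with a cusp cobordism from $f_0$ to $f_1$ (without passing to the nullcobordant case); then the collar of $V$ along $\partial M_1$ is reversed with respect to the target coordinate, the computation of $v$ there contributes the sign function $-\sigma_{f_1}$, and after \Cref{proposition condition n even} this reproduces $\chi_+[f_1]$ modulo $2$ via $\chi_+(f_1|_{\partial M_1};-\sigma_{f_1})=\chi(\partial M_1)-\chi_+[f_1]$, with \Cref{lemma euler characteristic}(b) now applied to $M_0\sqcup(-M_1)$ and $V$.
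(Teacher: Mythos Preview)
Your proposal is correct and follows essentially the same route as the paper's proof: reduce to a nullcobordism, produce the vector field $v$ on $S(F|_V)$, feed it through \Cref{proposition cusps and vector fields} and \Cref{proposition condition n even}, and finish with \Cref{lemma euler characteristic}(b). The only cosmetic difference is that you construct $v(x)=dF_x(\nu_x)$ by hand, whereas the paper packages exactly this computation as the implication (i)$\Rightarrow$(ii) of \Cref{proposition extension of G} (applied with $H=F|_{V\times[0,\infty)}$ for a compatible collar); your caveat about choosing $\nu$ so that near $\partial V$ it agrees with the inward $M$-collar direction is precisely what that compatible collar provides.
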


\begin{proof}
In order to show that the desired map is well-defined, let us consider two Morse functions $f_{1} \colon M_{1} \rightarrow \mathbb{R}$ and $f_{2} \colon M_{2} \rightarrow \mathbb{R}$ defined on compact oriented $n$-manifolds possibly with boundary that represent the same element in $\mathcal{C}_{n}$.
Then, the Morse function $f = f_{1} \sqcup -f_{2}$ defined on $M = M_{1} \sqcup -M_{2}$ represents the element $[f_{1}] - [f_{2}] = 0 = [f_{\emptyset} \colon \emptyset \rightarrow \mathbb{R}] \in \mathcal{C}_{n}$.
By \Cref{definition cusp cobordism}, there exists an oriented cobordism $(W, V)$ from $M$ to $\emptyset$ (compare \Cref{figure injectivity}) together with a map $F \colon W \rightarrow [0, 1] \times \mathbb{R}$ such that $F^{-1}(\mathbb{R} \times \{0\}) = M$ and $F^{-1}(\mathbb{R} \times \{1\}) = \emptyset$, and the following properties hold:
\begin{enumerate}[(i)]
\item For some $\varepsilon > 0$ there exists a collar neighborhood (with corners) $[0, \varepsilon) \times M \subset W$ of $\{0\} \times M = M \subset W$ such that $F|_{[0, \varepsilon) \times M} = \operatorname{id}_{[0, \varepsilon)} \times f$.
\item The restriction $F|_{W \setminus M}$ is a submersion at every point of the boundary $V \setminus \partial V$ of $W \setminus M$.
\item The restrictions $F|_{W \setminus \partial W}$ and $F|_{V \setminus \partial V}$ are generic maps into the plane.
\end{enumerate}
Let us fix a collar neighborhood (with corners) $V \times [0, \infty) \subset W$ of $V \subset W$.
We apply implication (i) $\Rightarrow$ (ii) of \Cref{proposition extension of G} to the maps $h = f|_{\partial V \times [0, \infty)}$, $H = F|_{\partial V \times [0, \infty)}$, and $g = f|_{\partial V}$, $G = F|_{V}$, to obtain a map $v \colon S(G) \rightarrow \mathbb{R}^{2}$ such that $v(x) \notin dG_{x}(T_{x}V) \subset T_{G(x)} \mathbb{R}^{2} = \mathbb{R}^{2}$ for all $x \in S(G)$, and there is $\varepsilon' \in (0, \varepsilon)$ such that $\sigma_{h}(y) \cdot v(x) \in \{0\} \times (0, \infty)$ for all $x = (s, y) \in S(G) \cap ([0, \varepsilon') \times \partial V) = [0, \varepsilon') \times S(g)$.
Next, we apply implication (i) $\Rightarrow$ (ii) of \Cref{proposition cusps and vector fields} to $g = f|_{\partial V}$, $G = F|_{V}$, $\sigma = \sigma_{f}$ ($= \sigma_{h}$), and $v \colon S(G) \rightarrow \mathbb{R}^{2}$, to conclude that $G = F|_{V}$ satisfies condition (ii) of \Cref{proposition cusps and vector fields}.
Finally, we apply implication (i) $\Rightarrow$ (ii) of \Cref{proposition condition n even} to the maps $g = f|_{\partial V}$, $\sigma = \sigma_{f}$ and $G = F|_{V}$, to conclude that $\chi(V) \equiv \chi_{+}(g; \sigma) \; (\operatorname{mod} 2)$.
Using that $\chi(V) \equiv \chi(M) \; (\operatorname{mod} 2)$ by \Cref{lemma euler characteristic}(b), and $\chi_{+}(g; \sigma) = \chi_{+}[f]$ (see \Cref{Creation and elimination of cusps}), we obtain $\chi(M) \equiv \chi_{+}[f] \; (\operatorname{mod} 2)$.
We observe that $\chi(M) = \chi(M_{1}) + \chi(M_{2})$ and $\chi_{+}[f] = \chi_{+}[f_{1}] + \chi_{+}[-f_{2}]$.
Moreover, as
$\# S(f_{2}|_{\partial M_{2}})$ is even by \Cref{corollary generic maps}, we get
$$
\chi_{+}[-f_{2}] \stackrel{(\ref{signed Euler characteristic})}{\equiv} \# S_{+}[-f_{2}|_{\partial M_{2}}] = \# S(f_{2}|_{\partial M_{2}}) - \# S_{+}[f_{2}|_{\partial M_{2}}] \equiv - \chi_{+}[f_{2}] \; (\operatorname{mod} 2).
$$
All in all, it follows that $\chi(M_{1}) - \chi_{+}[f_{1}] \equiv \chi(M_{2}) - \chi_{+}[f_{2}] \; (\operatorname{mod} 2)$ as desired.

Finally, the resulting map $\mathcal{C}_{n} \rightarrow \mathbb{Z}/2$ is clearly additive because the group law on $\mathcal{C}_{n}$ is induced by disjoint union.
\end{proof}

\begin{proposition}\label{proposition existence of homomorphism for n odd}
If $n > 2$ is odd, then there exists a homomorphism
\begin{align*}
\mathcal{C}_{n} \rightarrow \mathbb{Z}, \qquad [f \colon M \rightarrow \mathbb{R}] \mapsto \chi(M) - \chi_{+}[f].
\end{align*}
\end{proposition}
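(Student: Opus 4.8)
The plan is to run the argument of \Cref{proposition existence of homomorphism} with the odd-dimensional tools in place of the even ones. To prove that the assignment is well-defined, suppose $f_1 \colon M_1 \to \mathbb{R}$ and $f_2 \colon M_2 \to \mathbb{R}$ are Morse functions on compact oriented $n$-manifolds possibly with boundary that represent the same element of $\mathcal{C}_n$, so that $f = f_1 \sqcup -f_2$ on $M = M_1 \sqcup -M_2$ represents the trivial element of $\mathcal{C}_n$. Fixing a cusp cobordism $(W, V)$ from $M$ to $\emptyset$ together with $F \colon W \to [0,1] \times \mathbb{R}$ as in \Cref{definition cusp cobordism} and a collar $V \times [0,\infty) \subset W$, I would apply in turn, exactly as in the even case: the implication (i) $\Rightarrow$ (ii) of \Cref{proposition extension of G} (with $G = F|_V$ and $g = f|_{\partial V}$, the maps $h$ and $H$ being the relevant restrictions of $F$ near the collar) to obtain a map $v \colon S(G) \to \mathbb{R}^2$; the implication (i) $\Rightarrow$ (ii) of \Cref{proposition cusps and vector fields} with $\sigma = \sigma_f$ to conclude that $G$ satisfies condition (ii) of \Cref{proposition cusps and vector fields}; and the implication (i) $\Rightarrow$ (ii) of \Cref{proposition condition n odd}, applied to each connected component of $V$ and summed, to obtain the \emph{equality} $\frac{\chi(\partial V)}{2} = \chi_+(g; \sigma_f) = \chi_+[f]$.

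Next I would pass to Euler characteristics. Since $V$ is $n$-dimensional with $n$ odd, \Cref{lemma euler characteristic}(a) applied to each component gives $\chi(V) = \frac{\chi(\partial V)}{2}$; and since $\partial V = \partial M$ and $M$ is itself odd-dimensional, $\chi(M) = \frac{\chi(\partial M)}{2} = \frac{\chi(\partial V)}{2} = \chi(V)$. Hence $\chi(M) - \chi_+[f] = \chi(V) - \chi_+(g; \sigma_f) = 0$. It then remains the routine bookkeeping $\chi(M) = \chi(M_1) + \chi(M_2)$ and $\chi_+[f] = \chi_+[f_1] + \chi_+[-f_2]$, together with the identity $\chi_+[-f_2] = \chi(\partial M_2) - \chi_+[f_2]$ --- which holds because on the closed $(n-1)$-manifold $\partial M_2$ negating $f_2$ turns a critical point of Morse index $i$ and sign $\pm 1$ into one of index $n-1-i$ and sign $\mp 1$, while $(-1)^{n-1-i} = (-1)^i$ since $n-1$ is even --- and the relation $\chi(\partial M_2) = 2\chi(M_2)$ from \Cref{lemma euler characteristic}(a). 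Substituting, these yield $\chi(M_1) - \chi_+[f_1] = \chi(M_2) - \chi_+[f_2]$, so the assignment is well-defined. Additivity is immediate, since the group law on $\mathcal{C}_n$ is induced by disjoint union and both $\chi$ and $\chi_+$ are additive under disjoint union.

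I do not anticipate a genuine obstacle, as the argument is structurally identical to that of \Cref{proposition existence of homomorphism}; the only points requiring care are that one must keep track of integer values rather than their reductions modulo $2$ --- which is exactly why \Cref{lemma euler characteristic}(a), valid precisely in odd dimensions, is invoked twice instead of \Cref{lemma euler characteristic}(b) --- and that \Cref{proposition condition n odd}, stated for connected $V$, must be applied component by component, which is harmless because both sides of its conclusion are additive over the components of $\partial V$.
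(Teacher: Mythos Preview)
Your proposal is correct and follows essentially the same approach as the paper's own proof: both reduce to the nullcobordant case $f = f_1 \sqcup -f_2$, run the chain \Cref{proposition extension of G} $\Rightarrow$ \Cref{proposition cusps and vector fields} $\Rightarrow$ \Cref{proposition condition n odd} to obtain $\frac{\chi(\partial V)}{2} = \chi_{+}[f]$, and then do the same Euler-characteristic bookkeeping via \Cref{lemma euler characteristic}(a). Your derivation of $\chi_{+}[-f_2] = \chi(\partial M_2) - \chi_{+}[f_2]$ is equivalent to the paper's use of \Cref{helpful expression}, and your explicit remark that \Cref{proposition condition n odd} is applied component by component on $V$ is a point the paper leaves implicit.
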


\begin{proof}
Analogously to the proof of \Cref{proposition existence of homomorphism}, we can show that if two Morse functions $f_{1} \colon M_{1} \rightarrow \mathbb{R}$ and $f_{2} \colon M_{2} \rightarrow \mathbb{R}$ defined on compact oriented $n$-manifolds possibly with boundary represent the same element in $\mathcal{C}_{n}$, then the Morse function $f = f_{1} \sqcup -f_{2}$ defined on $M = M_{1} \sqcup -M_{2}$ satisfies $\frac{\chi(\partial M)}{2} = \chi_{+}(f|_{\partial M}; \sigma_{f})$.
(Note that we have to employ \Cref{proposition condition n odd} instead of \Cref{proposition condition n even}.)
Using that $\chi(\partial M) = \chi(\partial M_{1}) + \chi(\partial M_{2})$ and $\chi_{+}(f|_{\partial M}; \sigma_{f}) = \chi_{+}(f_{1}|_{\partial M_{1}}; \sigma_{f_{1}}) + \chi_{+}(-f_{2}|_{\partial M_{2}}; \sigma_{-f_{2}})$, we obtain
$$
\frac{\chi(\partial M_{1})}{2} - \chi_{+}(f_{1}|_{\partial M_{1}}; \sigma_{f_{1}}) = - \frac{\chi(\partial M_{2})}{2} + \chi_{+}(-f_{2}|_{\partial M_{2}}; \sigma_{-f_{2}}).
$$
Furthermore, since $n$ is odd, \Cref{helpful expression} implies that
$$
- \frac{\chi(\partial M_{2})}{2} + \chi_{+}(-f_{2}|_{\partial M_{2}}; \sigma_{-f_{2}}) = \frac{\chi(\partial M_{2})}{2} - \chi_{+}(f_{2}|_{\partial M_{2}}; \sigma_{f_{2}}).
$$
All in all, using that $\frac{\chi(\partial M_{j})}{2} = \chi(M_{j})$ by \Cref{lemma euler characteristic}(a), and $ \chi_{+}(f_{j}|_{\partial M_{j}}; \sigma_{f_{j}}) = \chi_{+}[f_{j}]$ (see \Cref{Creation and elimination of cusps}) for $j = 1, 2$, we obtain $\chi(M_{1}) - \chi_{+}[f_{1}] = \chi(M_{2}) - \chi_{+}[f_{2}]$ as desired.

Finally, the resulting map $\mathcal{C}_{n} \rightarrow \mathbb{Z}$ is clearly additive because the group law on $\mathcal{C}_{n}$ is induced by disjoint union.
\end{proof}

It remains to show that the homomorphisms of \Cref{proposition existence of homomorphism} and \Cref{proposition existence of homomorphism for n odd} are isomorphisms.

It follows from \Cref{remark realization of sign distribution} that the homomorphisms of \Cref{proposition existence of homomorphism} and \Cref{proposition existence of homomorphism for n odd} are surjective.
To see this, we fix an arbitrary compact oriented $n$-manifold $M$ with nonempty boundary, e.g. $M = D^{n}$, and an arbitrary Morse function $g \colon \partial M \rightarrow \mathbb{R}$.
Note that for any map $\sigma \colon S(g) \rightarrow \{\pm 1\}$, $g$ can be extended by means of \Cref{remark realization of sign distribution} to a Morse function $f \colon M \rightarrow \mathbb{R}$ in such a way that $\sigma = \sigma_{f}$.
When $n$ is even, we choose the map $\sigma \colon S(g) \rightarrow \{\pm 1\}$ in such a way that $\# S_{+}(g; \sigma) \not\equiv \chi(M) \; (\operatorname{mod} 2)$.
Since $\# S_{+}(g; \sigma) \equiv \chi_{+}(g; \sigma) \; (\operatorname{mod} 2)$, we have thus achieved that the homomorphism of \Cref{proposition existence of homomorphism} maps the resulting class $[f \colon M \rightarrow \mathbb{R}] \in \mathcal{C}_{n}$ to the generator of $\mathbb{Z}/2$.
When $n$ is odd, we use \Cref{helpful expression} to choose the map $\sigma \colon S(g) \rightarrow \{\pm 1\}$ in such a way that $\frac{\chi(\partial M)}{2} - \chi_{+}(g; \sigma) = 1$.
Since $\frac{\chi(\partial M)}{2} = \chi(M)$ by \Cref{lemma euler characteristic}(a), we have achieved that the homomorphism of \Cref{proposition existence of homomorphism for n odd} maps the resulting class $[f \colon M \rightarrow \mathbb{R}] \in \mathcal{C}_{n}$ to a generator of $\mathbb{Z}$.

\begin{figure}[htbp]
\centering
\fbox{\begin{tikzpicture}
    \draw (0, 0) node {\includegraphics[height=0.4\textwidth]{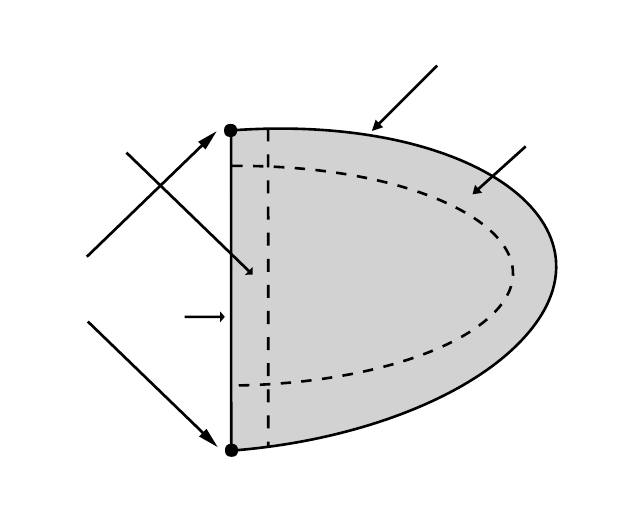}};
    \draw (-2.6, -0.35) node {$\partial M = \partial V$};
    \draw (1.5, 2.2) node {$V$};
    \draw (2.5, 1.3) node {$V \times [0, \delta)$};
    \draw (-1.5, -0.6) node {$M$};
    \draw (-2.1, 1.3) node {$[0, \varepsilon) \times M$};
    \draw (0.5, -0.2) node {$W$};
\end{tikzpicture}}
\caption{Illustration of an oriented cobordism $(W, V)$ from $M$ to $\emptyset$.
Note that $W$ is a compact oriented manifold with boundary $\partial W = M \cup V$ and corners along $\partial M = \partial V$.
The relevant collar neighborhoods (with corners) of $M \subset W$ and $V \subset W$ are also indicated.}
\label{figure injectivity}
\end{figure}

It remains to show that the homomorphisms of \Cref{proposition existence of homomorphism} and \Cref{proposition existence of homomorphism for n odd} are injective.
For this purpose, we suppose that $f \colon M \rightarrow \mathbb{R}$ represents an element $[f] \in \mathcal{C}_{n}$ which has trivial image under the homomorphisms of \Cref{proposition existence of homomorphism} and \Cref{proposition existence of homomorphism for n odd} for $n \geq 2$ even and odd, respectively.
That is, we assume that $\chi(M) \equiv \chi_{+}[f] \; (\operatorname{mod} 2)$ when $n$ is even, and $\chi(M) = \chi_{+}[f]$ when $n$ is odd.
As indicated in \Cref{figure injectivity}, we choose a compact oriented $(n+1)$-manifold with corners $W^{n+1}$ such that $\partial W = M \cup_{\partial M} V$, where $V^{n}$ is a compact oriented $n$-manifold with boundary $\partial V = \partial M$, and the corners of $W$ are along $\partial V$.
(In fact, we can take $W = M \times [0, 1]$.
Note that by smoothing the corners along $\partial M \times \{1\}$, we can achieve that $V = (\partial M \times [0, 1]) \cup_{\partial M \times \{1\}} (M \times \{1\})$ becomes a smooth manifold.)
Without loss of generality we may assume that $V$ is connected by applying the boundary connected sum operation to the (non-compact) manifold $W \setminus M$ with boundary $V \setminus \partial V$.
Writing $g = f|_{\partial M}$ and $\sigma = \sigma_{f}$, we note that $\chi_{+}[f] = \chi_{+}(g; \sigma)$ in the notation of \Cref{Creation and elimination of cusps} (see \Cref{abstract sign Euler characteristic}).
Moreover, by \Cref{lemma euler characteristic}, we have $\chi(M) \equiv \chi(V) \; (\operatorname{mod} 2)$ for $n$ even, and $\chi(M) = \frac{\chi(V)}{2}$ for $n$ odd.
Hence, by the assumptions on $f$, we may apply implication (ii) $\Rightarrow$ (i) of \Cref{proposition condition n even} when $n$ is even, and implication (ii) $\Rightarrow$ (i) of \Cref{proposition condition n odd} when $n$ is odd, to obtain a map $G \colon V \rightarrow \mathbb{R}^{2}$ such that $G|_{V \setminus \partial V}$ is generic, $G|_{[0, \varepsilon) \times \partial V} = \operatorname{id}_{[0, \varepsilon)} \times g$ in some collar neighborhood $[0, \varepsilon) \times \partial V \subset V$ of $\partial V \subset V$, and $G$ satisfies condition (ii) of \Cref{proposition cusps and vector fields}.
Then, applying implication (ii) $\Rightarrow$ (i) of \Cref{proposition cusps and vector fields} to $g = f|_{\partial V}$, $G$, and $\sigma = \sigma_{f}$, we obtain a map $v \colon S(G) \rightarrow \mathbb{R}^{2}$ such that $v(x) \notin dG_{x}(T_{x}V) \subset T_{G(x)} \mathbb{R}^{2} = \mathbb{R}^{2}$ for all $x \in S(G)$, and there is $\varepsilon' \in (0, \varepsilon)$ such that $\sigma(y) \cdot v(x) \in \{0\} \times (0, \infty)$ for all $x = (s, y) \in S(G) \cap ([0, \varepsilon') \times \partial V) = [0, \varepsilon') \times S(g)$.
We choose a collar neighborhood (with corners) $V \times [0, \infty) \subset W$ of $V \subset W$ (see \Cref{figure injectivity}).
Then, we may apply implication (ii) $\Rightarrow$ (i) of \Cref{proposition extension of G} to the maps $h = f|_{\partial V \times [0, \infty)}$, $g = f|_{\partial V}$, $G$, and $v \colon S(G) \rightarrow \mathbb{R}^{2}$ (where note that $\sigma = \sigma_{f} = \sigma_{h}$) to obtain a map $H \colon V \times [0, \infty) \rightarrow \mathbb{R}^{2}$ such that $H|_{V \times \{0\}} = G$, $H|_{[0, \varepsilon') \times \partial V \times [0, \infty)} = \operatorname{id}_{[0, \varepsilon')} \times h$ for some $\varepsilon' \in (0, \varepsilon)$, and $H|_{(V \setminus \partial V) \times [0, \infty)}$ is admissible in the sense of \Cref{Non-singular extensions}.
We can choose a collar neighborhood (with corners) $[0, \varepsilon) \times M \subset W$ of $M \subset W$ (see \Cref{figure injectivity}) that extends $[0, \varepsilon) \times \partial V \times [0, \delta) \subset V \times [0, \infty)$ for some $\delta > 0$.
Finally, we extend the $M \cup_{\partial M} V$-germ of the map
$$
H|_{V \times [0, \delta)} \cup (\operatorname{id}_{[0, \varepsilon')} \times f) \colon (V \times [0, \delta)) \cup ([0, \varepsilon') \times M) \rightarrow \mathbb{R}^{2}
$$
by means of \Cref{proposition generic extension} to the desired cusp cobordism $F \colon W \rightarrow \mathbb{R}^{2}$ from $f$ to $f_{\emptyset}$ (compare \Cref{definition cusp cobordism} and \Cref{remark target plane}).

This completes the proof of \Cref{main theorem}.

\section{Admissible cobordism group}\label{admissible cobordism group}
Recall from \Cref{Non-singular extensions} that a map between manifolds possibly with boundary is called \emph{admissible} if it is a submersion on a neighborhood of the boundary of the source manifold.
As noted before, Saeki and Yamamoto's notion of admissible cobordism \cite{sy1, sy3} (see \Cref{definition admissible cobordism} below) is slightly different from our notion of cusp cobordism (see \Cref{definition cusp cobordism}) in that they make additional $C^{\infty}$ stability assumptions on the maps.
The purpose of this section is to show that both cobordism relations yield isomorphic cobordism groups (see \Cref{Admissible cobordism}).
In preparation, we collect some necessary background about $C^{\infty}$ stable maps in \Cref{stable maps}.

\subsection{Stable maps}\label{stable maps}
We recall that a map $f \colon N \rightarrow P$ of a manifold possibly with boundary $N$ to a manifold without boundary $P$ is called \emph{$C^{\infty}$ stable} if there exists a neighborhood of $f$ in the space $C^{\infty}(N, P)$ of maps from $N$ to $P$ endowed with the Whitney $C^{\infty}$ topology such that every map $g$ in the neighborhood is $C^{\infty}$ right-left equivalent to $f$, that is, there exist diffeomorphisms $\Phi \colon N \rightarrow N$ and $\Psi \colon P \rightarrow P$ such that $\Psi \circ f = g \circ \Phi$.

For instance, when $N$ is a compact manifold possibly with boundary, a function $f \colon N \rightarrow \mathbb{R}$ is $C^{\infty}$ stable if and only if $f$ is a Morse function that is injective on $S(f) \sqcup S(f|{\partial N})$.
Furthermore, in the case that $N$ (not necessarily compact) has dimension $\geq 3$ and $P$ is a surface without boundary, it is well-known that a proper admissible map $f \colon N \rightarrow P$ is $C^{\infty}$ stable if and only if both $f|_{N \setminus \partial N}$ and $f|_{\partial N}$ are generic maps, and the restriction of $f$ to the $1$-manifold without boundary $S(f|_{N \setminus \partial N}) \sqcup S(f|_{\partial N})$ has the following properties.
If $\Sigma \subset S(f|_{N \setminus \partial N}) \sqcup S(f|_{\partial N})$ denotes the set of cusps of $f|_{N \setminus \partial N}$ and $f|_{\partial N}$, then $f|_{(S(f|_{N \setminus \partial N}) \sqcup S(f|_{\partial N})) \setminus \Sigma}$ is an immersion with normal crossings (see e.g. Section III.\S 3 in \cite[pp. 82ff]{gol}), and for every $c \in \Sigma$ we have $f^{-1}(f(c)) = \{c\}$.

In view of our application to admissible cobordism, we shall exploit the following result, which is Lemma 3.3.6 in \cite[p. 62]{wra}.

\begin{proposition}\label{perturbation of fold lines}
Let $n \geq 2$ be an integer.
Consider the normal form
\begin{align}\label{fold normal form}
\Lambda(t, z_{1}, \dots, z_{n-1}) = (t, -z_{1}^{2} - \dots - z_{i}^{2} + z_{i+1}^{2} + \dots + z_{n-1}^{2})
\end{align}
for a fold point of absolute index $i$ (see \Cref{definition generic maps into the plane}).
Given functions $\alpha, \beta \colon \mathbb{R} \rightarrow \mathbb{R}$ with compact support, we set
\begin{align*}
\Delta(t, z_{1}, \dots, z_{n-1}) = (0, \alpha(t) \beta(||z||^{2})),
\end{align*}
where $||z||^{2} = z_{1}^{2} + \dots + z_{n}^{2}$.
If $|\alpha(t) \beta'(r)| < 1$ for all $(t, r) \in \mathbb{R}^{2}$, then the perturbation $F = \Lambda + \Delta$ of $\Lambda$ is a fold map with a single fold line $S(F) = S(\Lambda) = \mathbb{R} \times \{0\} \subset \mathbb{R}^{n}$ whose image $F(S(F)) \subset \mathbb{R}^{2}$ is given by the graph of the map $t \mapsto (t, \alpha(t)\beta(0))$.
\end{proposition}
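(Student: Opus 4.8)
The plan is to compute the differential of $F$ by hand, read off its singular set, and then bring $F$ into the fold normal form near each singular point via the Morse lemma with parameters. Write $Q(z) = -z_1^2 - \cdots - z_i^2 + z_{i+1}^2 + \cdots + z_{n-1}^2$ and let $\epsilon_j \in \{+1,-1\}$ denote the sign with which $z_j^2$ occurs in $Q$, so that $F(t,z) = (t, g(t,z))$ with $g(t,z) = Q(z) + \alpha(t)\beta(\|z\|^2)$. Since the first component of $F$ is the projection onto $t$, a point $(t,z)$ lies in $S(F)$ if and only if $\partial g/\partial z_j = 0$ there for every $j$. A direct computation gives $\partial g/\partial z_j = 2 z_j\bigl(\epsilon_j + \alpha(t)\beta'(\|z\|^2)\bigr)$, and the hypothesis $|\alpha(t)\beta'(r)| < 1$ guarantees that each factor $\epsilon_j + \alpha(t)\beta'(\|z\|^2)$ is nonzero; hence $(t,z)\in S(F)$ if and only if $z = 0$, i.e.\ $S(F) = \mathbb{R}\times\{0\} = S(\Lambda)$.

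The next step is to show that every point of $S(F)$ is a fold point, of the same absolute index as $\Lambda$. For fixed $t$ the map $z\mapsto g(t,z)$ has a critical point at $z=0$, and differentiating once more shows that its Hessian there is the diagonal matrix $\operatorname{diag}\bigl(2(\epsilon_j + \alpha(t)\beta'(0))\bigr)_{j=1}^{n-1}$; by the hypothesis applied at $r=0$ every diagonal entry is nonzero with the same sign as $\epsilon_j$, so this Hessian is non-degenerate of index $i$. The Morse lemma with parameters then yields, near any fixed $t_0$, a smooth family $(\varphi_t)$ of local diffeomorphisms of the $z$-variable fixing the origin with $g(t,z) = g(t,0) + Q(\varphi_t(z))$. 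Using $(t,z)\mapsto(t,\varphi_t(z))$ as a source coordinate change and the global target diffeomorphism $(t,s)\mapsto (t,s-\alpha(t)\beta(0))$ of $\mathbb{R}^2$, the map $F$ becomes $\Lambda$ near $(t_0,0)$; since $t_0$ was arbitrary and $F$ is a submersion off $S(F)$, this proves that $F$ is a fold map whose single fold line is $\mathbb{R}\times\{0\}$, with absolute index $\max\{i,n-1-i\}$ (see \Cref{definition generic maps into the plane}). (Alternatively one may recognize the fold directly from the normal forms: along $S(F)$ the tangent line $\mathbb{R}\,\partial_t$ is transverse to $\ker dF = \{0\}\times\mathbb{R}^{n-1}$ and the vertical Hessian is non-degenerate, which is precisely what separates a fold from a cusp.)

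Finally, the image of the fold line is $F(S(F)) = \{(t,g(t,0)) : t\in\mathbb{R}\} = \{(t,\alpha(t)\beta(0)) : t\in\mathbb{R}\}$, which is the graph of $t\mapsto(t,\alpha(t)\beta(0))$, as claimed. I do not expect a genuine obstacle; the only step requiring some care is the parametrized Morse lemma, and it applies at once because the critical point stays at the origin and the vertical Hessian is a $z$-independent, non-degenerate diagonal matrix of constant index. (If one prefers to avoid quoting it, one can instead integrate the evident fibrewise isotopy along the homotopy $g_s = Q + s\,\alpha(t)\beta(\|z\|^2)$, $s\in[0,1]$, all of whose vertical Hessians remain non-degenerate by the same sign estimate.)
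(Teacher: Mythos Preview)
Your proof is correct and follows essentially the same route as the paper's, only more explicitly unpacked. The paper quotes a black-box criterion (a map of the form $H(t,z)=(t,h(t,z))$ is a fold map iff $0$ is a regular value of $D^{z}h$ and $H|_{S(H)}$ is an immersion) and declares the rest immediate; you compute the same ingredients by hand---$\partial g/\partial z_{j}=2z_{j}\bigl(\epsilon_{j}+\alpha(t)\beta'(\|z\|^{2})\bigr)$, the diagonal vertical Hessian $\operatorname{diag}\bigl(2(\epsilon_{j}+\alpha(t)\beta'(0))\bigr)$---and then invoke the parametrized Morse lemma to produce the normal form directly, which is exactly how the cited criterion is proved. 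The only cosmetic difference is that the paper packages the last step as ``$H|_{S(H)}$ is an immersion'' (trivially true here since $t\mapsto(t,\alpha(t)\beta(0))$ has nonvanishing first component), while you instead exhibit the explicit source and target coordinate changes; both yield the same conclusion.
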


\begin{proof}
The proof that $F$ is a fold map is a straightforward application of the standard fact (see Proposition 3.3.4(c) in \cite[p. 58]{wra}) that a map $H \colon \mathbb{R}^{n} = \mathbb{R} \times \mathbb{R}^{n-1} \rightarrow \mathbb{R}^{2}$ of the form $H(t, z) = (t, h(t, z))$ for some function $h \colon \mathbb{R} \times \mathbb{R}^{n-1} \rightarrow \mathbb{R}^{2}$ is a fold map if and only if $0 \in \mathbb{R}^{n-1}$ is a regular value of $D^{z}h = (\partial_{z_{1}}h, \dots, \partial_{z_{n-1}}h) \colon \mathbb{R} \times \mathbb{R}^{n-1} \rightarrow \mathbb{R}^{n-1}$, and the restriction $H|_{S(H)} \colon S(H) \rightarrow \mathbb{R}^{2}$ of $H$ to the $1$-dimensional submanifold $S(H) = (D^{z}h)^{-1}(0) \subset \mathbb{R}^{n}$ is an immersion.
The remaining claims follow immediately.
\end{proof}

Given a generic map $G \colon X^{n} \rightarrow \mathbb{R}^{2}$ defined on a manifold (without boundary) of dimension $n \geq 2$, \Cref{corollary perturb} below enables us to perturb the image of a fold line of $G$ near a given fold point in a controlled way such that the singular set $S(G)$ is not changed by the perturbation.

\begin{corollary}\label{corollary perturb}
Let $n \geq 2$ be an integer.
Let $U \subset \mathbb{R}^{n} = \mathbb{R} \times \mathbb{R}^{n-1}$ and $V \subset \mathbb{R}^{2}$ be open subsets such that $\Lambda(U) \subset V$, where $\Lambda$ denotes the normal form (\ref{fold normal form}) for fold points.
Suppose that $[a, b] \times \{0\} \subset U$ for some real numbers $a < b$.
Then, there exists $c > 0$ such that for any function $\gamma \colon \mathbb{R} \rightarrow [-c, c]$ with support in $[a, b]$, there exists a fold map $G \colon U \rightarrow V$ such that $G|_{U \setminus K} = \Lambda|_{U \setminus K}$ for some compact subset $K \subset U$, and such that $G$ has a single fold line $S(G) = S(\Lambda|_{U})$ whose image $G(S(G)) \subset \mathbb{R}^{2}$ is given by the graph of the map $t \mapsto (t, \gamma(t))$ defined for $t \in \mathbb{R}$ with $(t, 0) \in U$.
\end{corollary}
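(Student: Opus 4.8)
The plan is to deduce this from \Cref{perturbation of fold lines} by choosing the auxiliary functions $\alpha,\beta$ appearing there so that the perturbed fold line has image equal to the graph of $t\mapsto(t,\gamma(t))$, and then restricting the resulting fold map on $\mathbb{R}^{n}$ to the open set $U$. The key point is that the constant $c$ must be extracted from $U$, $V$, $\Lambda$, $a$, $b$ alone, before $\gamma$ is specified.

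First I would fix the data independent of $\gamma$. Since $[a,b]\times\{0\}$ is a compact subset of the open set $U$, there is $\rho>0$ with
\[
K:=[a,b]\times\{z\in\mathbb{R}^{n-1}:\|z\|^{2}\le\rho\}\subset U.
\]
Choose a smooth function $\beta\colon\mathbb{R}\to[0,1]$ with $\operatorname{supp}\beta\subset[-\rho,\rho]$ and $\beta\equiv 1$ near $0$; then $\beta(0)=1$ and $B:=\sup_{r}\lvert\beta'(r)\rvert$ is a finite positive number. As $\Lambda(U)\subset V$, the image $\Lambda(K)$ is a compact subset of the open set $V$, so $d:=\operatorname{dist}(\Lambda(K),\mathbb{R}^{2}\setminus V)\in(0,\infty]$ (with $d=\infty$ if $V=\mathbb{R}^{2}$). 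Set $c=\tfrac12\min\{1/B,\,d\}>0$; this depends only on $U,V,\Lambda,a,b$.

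Now, given $\gamma\colon\mathbb{R}\to[-c,c]$ with $\operatorname{supp}\gamma\subset[a,b]$, I would apply \Cref{perturbation of fold lines} with $\alpha=\gamma$ and with the $\beta$ chosen above. Both have compact support, and $\lvert\alpha(t)\beta'(r)\rvert\le cB\le\tfrac12<1$ for all $(t,r)$, so \Cref{perturbation of fold lines} yields the fold map $F=\Lambda+\Delta\colon\mathbb{R}^{n}\to\mathbb{R}^{2}$ with $\Delta(t,z)=(0,\gamma(t)\beta(\|z\|^{2}))$, having a single fold line $S(F)=\mathbb{R}\times\{0\}$ whose image is the graph of $t\mapsto(t,\gamma(t)\beta(0))=(t,\gamma(t))$. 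Since $\Delta$ vanishes outside $K$ and $\|\Delta(p)\|\le c\le d/2<d$ for all $p$, we get $F|_{\mathbb{R}^{n}\setminus K}=\Lambda|_{\mathbb{R}^{n}\setminus K}$, $F(K)\subset V$ (because $F(K)$ lies within distance $<d$ of $\Lambda(K)$), and $F(U\setminus K)=\Lambda(U\setminus K)\subset V$. Hence $G:=F|_{U}\colon U\to V$ is a fold map with $G|_{U\setminus K}=\Lambda|_{U\setminus K}$, $S(G)=S(F)\cap U=(\mathbb{R}\times\{0\})\cap U=S(\Lambda|_{U})$, and $G(S(G))=F\bigl((\mathbb{R}\times\{0\})\cap U\bigr)=\{(t,\gamma(t)):(t,0)\in U\}$, i.e.\ the graph of $t\mapsto(t,\gamma(t))$ over all $t$ with $(t,0)\in U$.

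Once \Cref{perturbation of fold lines} is in hand the argument is purely bookkeeping; the only steps needing care are the choice of $\rho$ so that the support $K$ of the perturbation lies in $U$, and the distance estimate guaranteeing $F(K)\subset V$ — the latter being exactly what dictates the bound $c\le d/2$. I expect no genuine obstacle here beyond making sure $c$ is visibly independent of $\gamma$.
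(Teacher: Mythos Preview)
Your proof is correct and follows essentially the same approach as the paper's: both apply \Cref{perturbation of fold lines} with $\alpha=\gamma$ and a fixed bump $\beta$, choosing $c$ small enough to guarantee both $|\alpha\beta'|<1$ and $F(U)\subset V$. The only cosmetic difference is that the paper verifies $F(K)\subset V$ via an explicit coordinate box $[a,b]\times[-2\varepsilon,2\varepsilon]\subset V$, whereas you use the distance $d=\operatorname{dist}(\Lambda(K),\mathbb{R}^{2}\setminus V)$; both arguments are equivalent here.
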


\begin{proof}
The construction of $c > 0$ is as follows.
As $\Lambda([a, b] \times \{0\}) = [a, b] \times \{0\} \subset V$, we may choose $\varepsilon > 0$ such that $[a, b] \times [-2\varepsilon, 2\varepsilon] \subset V$.
Then, there exists $\delta > 0$ such that $K = [a, b] \times \{z \in \mathbb{R}^{n-1}| \; ||z||^{2} \leq \delta\} \subset U \cap \Lambda_{2}^{-1}((-\varepsilon, \varepsilon))$, where we write $\Lambda = (\Lambda_{1}, \Lambda_{2})$.
Let $\beta \colon \mathbb{R} \rightarrow \mathbb{R}$ be a function with support in $[-\delta, \delta]$ such that $\beta(0) = 1$.
We choose $c > 0$ so small that $|\beta(r)| < \varepsilon/c$ and $|\beta'(r)| < 1/c$ for all $r \in \mathbb{R}$.

Given a function $\gamma \colon \mathbb{R} \rightarrow [-c, c]$ with support in $[a, b]$, we construct the desired fold map $G \colon U \rightarrow V$ as follows.
Since $|\gamma(t)\beta'(r)| \leq c|\beta'(r)| < 1$ for all $(t, r) \in \mathbb{R}^{2}$ by choice of $c$, we may apply \Cref{perturbation of fold lines} to the functions $\alpha = \gamma$ and $\beta$ to obtain a fold map $F = \Lambda + \Delta$ with a single fold line $S(F) = S(\Lambda) = \mathbb{R} \times \{0\} \subset \mathbb{R}^{n}$ whose image $F(S(F)) \subset \mathbb{R}^{2}$ is given by the graph of the map $t \mapsto (t, \alpha(t)\beta(0)) = (t, \gamma(t))$.
Recall that the map $\Delta \colon \mathbb{R}^{n} = \mathbb{R} \times \mathbb{R}^{n-1} \rightarrow \mathbb{R}^{2}$ is of the form $\Delta = (0, \Delta_{2})$, and the map $(t, r) \mapsto \Delta_{2}(t, r) = \alpha(t) \beta(||z||^{2})$ vanishes outside of $K = [a, b] \times \{z \in \mathbb{R}^{n-1}| \; ||z||^{2} \leq \delta\} \subset U$.
It remains to show that $F(U) \subset V$, from which it follows immediately that $G = F| \colon U \rightarrow V$ has the desired properties.
Let $(t, z) \in U$.
If $(t, z) \notin K$, then it follows from $\Delta(t, z) = (0, 0)$ that $F(t, z) = \Lambda(t, z) \in V$.
Otherwise, we have $(t, z) \in K = [a, b] \times \{z \in \mathbb{R}^{n-1}| \; ||z||^{2} \leq \delta\}$.
Then, we conclude from $|\Delta_{2}(t, z)| = |\alpha(t) \beta(||z||^{2})| \leq c |\beta(||z||^{2})| < \varepsilon$ and $|\Lambda_{2}(t, z)| \leq \varepsilon$ that $F(t, z) = (t, (\Lambda_{2} + \Delta_{2})(t, z)) \in [a, b] \times [-2\varepsilon, 2\varepsilon] \subset V$.
\end{proof}

\subsection{Admissible cobordism}\label{Admissible cobordism}
The notion of (oriented) admissible cobordism (see Definition 1.1 as well as Section 6 in \cite{sy3}) is defined as follows.

\begin{definition}[admissible cobordism]\label{definition admissible cobordism}
For $i = 0, 1$ let $f_{i} \colon M_{i} \rightarrow \mathbb{R}$ be a $C^{\infty}$ stable function on a compact oriented $n$-manifold $M_{i}$ possibly with boundary.
An \emph{(oriented) admissible cobordism} from $f_{0}$ to $f_{1}$ is an (oriented) cobordism $(W^{n+1}, V)$ from $M_{0}$ to $M_{1}$ together with a map $F \colon W \rightarrow [0, 1] \times \mathbb{R}$ such that $F^{-1}(\mathbb{R} \times \{i\}) = M_{i}$ for $i = 0, 1$, and the following properties hold:
\begin{enumerate}[(i)]
\item For some $\varepsilon > 0$ there exist collar neighborhoods (with corners) $[0, \varepsilon) \times M_{0} \subset W$ of $M_{0} \subset W$ and $(1-\varepsilon, 1] \times M_{1} \subset W$ of $M_{1} \subset W$ such that $F|_{[0, \varepsilon) \times M_{0}} = \operatorname{id}_{[0, \varepsilon)} \times f_{0}$ and $F|_{(1-\varepsilon, 1] \times M_{1}} = \operatorname{id}_{(1-\varepsilon, 1]} \times f_{1}$.
\item The restriction $F| \colon W \setminus (M_{0} \sqcup M_{1}) \rightarrow (0, 1) \times \mathbb{R}$ is a proper admissible $C^{\infty}$ stable map.
\end{enumerate}
\end{definition}
Following \cite{sy3}, the resulting oriented and unoriented admissible cobordism groups of admissible Morse functions on manifolds with boundary are denoted by $b \mathfrak{M}_{n}$ and $b \mathfrak{N}_{n}$, respectively.

Since $C^{\infty}$ stable functions on compact manifolds possibly with boundary are Morse functions, and admissible cobordisms are cusp cobordisms in the sense of \Cref{definition cusp cobordism}, we have a well-defined homomorphism
\begin{align}\label{homomorphism from admissible cobordism to cusp cobordism}
b \mathfrak{M}_{n} \rightarrow \mathcal{C}_{n}, \qquad [f \colon M^{n} \rightarrow \mathbb{R}] \mapsto [f].
\end{align}
In response to Problem 6.2 in \cite{sy3}, we prove the following

\begin{proposition}\label{proposition admissible cobordism group}
The homomorphism (\ref{homomorphism from admissible cobordism to cusp cobordism}) is an isomorphism.
\end{proposition}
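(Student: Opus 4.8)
The plan is to show that the homomorphism (\ref{homomorphism from admissible cobordism to cusp cobordism}) is surjective and injective.

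\emph{Surjectivity.} Every class of $\mathcal{C}_n$ is represented by some Morse function $f \colon M \to \mathbb{R}$ on a compact oriented $n$-manifold $M$. I would first perturb $f$ on a compact subset of $M \setminus \partial M$ to a nearby Morse function $f' \colon M \to \mathbb{R}$ whose critical values are pairwise distinct and are moreover distinct from the finitely many critical values of $f'|_{\partial M} = f|_{\partial M}$; such an $f'$ is $C^\infty$ stable, hence represents a class in $b \mathfrak{M}_n$. It then remains to check that $f$ and $f'$ are cusp cobordant, which I would do using the identity cobordism $W = M \times [0, 1]$: set $F = \operatorname{id} \times f$ near $M \times \{0\}$, $F = \operatorname{id} \times f'$ near $M \times \{1\}$, $F = \operatorname{id} \times (f|_{\partial M})$ on a collar of $\partial M \times [0, 1] = V$ (these are compatible because the perturbation is supported in the interior of $M$), and extend to a generic map on the remaining part of the interior by the extension technique of \Cref{proposition generic extension}. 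The resulting $F$ is a cusp cobordism from $f$ to $f'$, so $[f'] \in b \mathfrak{M}_n$ is mapped by (\ref{homomorphism from admissible cobordism to cusp cobordism}) to $[f'] = [f]$ in $\mathcal{C}_n$.

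\emph{Injectivity.} Let $f \colon M \to \mathbb{R}$ be $C^\infty$ stable with $[f] = 0$ in $\mathcal{C}_n$, and let $F_0 \colon W \to [0, 1] \times \mathbb{R}$ be a cusp cobordism from $f$ to $f_\emptyset$. I would deform $F_0$, keeping it a cusp cobordism and fixing it near $M_0 \sqcup M_1$ and near $\partial V$, to a map $F$ that is an admissible cobordism in the sense of \Cref{definition admissible cobordism}, i.e.\ so that $F|_{W \setminus (M_0 \sqcup M_1)}$ is a proper admissible $C^\infty$ stable map. Properness is automatic since $W$ is compact, and admissibility (submersion near $V \setminus \partial V$) is already part of the cusp cobordism data. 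By the characterization of $C^\infty$ stable proper admissible maps into a surface recalled in \Cref{stable maps} (applicable because $\dim W = n + 1 \geq 3$), it therefore suffices to arrange, in addition to the genericity of $F|_{W \setminus \partial W}$ and $F|_{V \setminus \partial V}$ (which is preserved by the small perturbations below), that the restriction of $F$ to the $1$-manifold $S(F|_{W \setminus \partial W}) \sqcup S(F|_{V \setminus \partial V})$ is, away from cusps, an immersion with normal crossings, and that every cusp $c$ satisfies $F^{-1}(F(c)) = \{c\}$.

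Near $M_0$, and trivially near $M_1 = \emptyset$, the map $F_0$ is the suspension $\operatorname{id} \times f$ of a $C^\infty$ stable function, and near $\partial V$ the map $F_0|_V$ is locally the suspension of the $C^\infty$ stable Morse function $f|_{\partial M}$; hence near $\partial W$ the singular-set images are embedded families of arcs without cusps, and the families coming from $W$ and from $V$ are disjoint because $f$ is injective on $S(f) \sqcup S(f|_{\partial M})$. So the $C^\infty$ stability conditions already hold near $\partial W$, and I would only need to achieve them on a compact region of $(W \setminus \partial W) \cup (V \setminus \partial V)$. This is a general position argument: using \Cref{corollary perturb} to move the images of fold lines near individual fold points, which leaves the singular set itself unchanged, together with small perturbations near cusps preserving the cusp normal form, one makes the singular-set restriction an immersion with normal crossings and all cusp images distinct and disjoint from the fold-line images; one must carry out the modifications on $V$ and extend them to a neighborhood of $V$ in $W$ so as to preserve the submersion property along $V \setminus \partial V$. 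The resulting $F$ is an admissible cobordism from $f$ to $f_\emptyset$, so $[f] = 0$ in $b \mathfrak{M}_n$. I expect this last injectivity step to be the crux: simultaneously turning the interior singular set and the singular set on $V$ into a normal-crossing immersion with mono cusps, without disturbing the already-good structure near $\partial W$, without losing genericity, and without breaking the submersion condition along $V \setminus \partial V$ — the last constraint being exactly why one perturbs $F_0$ near $V$ rather than only on $V$, and the reason \Cref{corollary perturb} is the right tool.
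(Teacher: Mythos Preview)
Your surjectivity argument has a gap: perturbing $f$ only on a compact subset of $M \setminus \partial M$ leaves $f|_{\partial M}$ unchanged, so if $f|_{\partial M}$ fails to be injective on its critical set (nothing in the definition of Morse function excludes this), then $f'$ is not injective on $S(f') \sqcup S(f'|_{\partial M})$ and hence not $C^\infty$ stable. The paper handles this by also perturbing on the boundary, but then the cylinder construction with $F|_V = \operatorname{id} \times f|_{\partial M}$ is no longer available; instead it invokes the already-proved \Cref{main theorem}: one constructs a $C^\infty$ stable $f_1$ on the same $M$ with $\chi_+[f_1] = \chi_+[f]$ (via \Cref{remark realization of sign distribution} applied to the perturbed boundary Morse function together with the sign data coming from $\sigma_f$) and concludes $[f_1] = [f]$ in $\mathcal{C}_n$ directly from the complete invariant, without ever writing down an explicit cusp cobordism.

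For injectivity your outline is plausible but the strategy differs from the paper's. After perturbing $F_0|_V$ to a $C^\infty$ stable $G_0$ (via cusp propagation and \Cref{corollary perturb}), the paper does not try to extend this perturbation into a collar of $V$ in $W$; rather it observes that these perturbations leave the singular set and the distribution of cusps on its components unchanged, so condition (ii) of \Cref{proposition cusps and vector fields} persists for $G_0$, and then it \emph{rebuilds} the map on $W$ from scratch by running \Cref{proposition cusps and vector fields} and \Cref{proposition extension of G} in the direction (ii)$\Rightarrow$(i), followed by \Cref{proposition generic extension} and a final interior perturbation. Your approach of damping the $V$-perturbation across a collar could probably be made to work (the relevant perturbations can be taken $C^1$-small, and the submersion condition along $V \setminus \partial V$ is open), but the paper's route sidesteps that analysis entirely by reusing the extension machinery already developed for the proof of \Cref{main theorem}.
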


\begin{proof}
In order to show that the homomorphism (\ref{homomorphism from admissible cobordism to cusp cobordism}) is surjective, we consider a Morse function $f \colon M \rightarrow \mathbb{R}$ representing a class in $\mathcal{C}_{n}$.
By perturbing the Morse function $f|_{\partial M} \colon \partial M \rightarrow \mathbb{R}$ slightly, we obtain a Morse function $g \colon \partial M \rightarrow \mathbb{R}$ which is injective on $S(g)$.
We apply \Cref{remark realization of sign distribution} to the Morse function $g \colon \partial M \rightarrow \mathbb{R}$ and the map $\sigma \colon S(g) \rightarrow \{\pm 1\}$ induced by $\sigma_{f} \colon S(f|_{\partial M}) \rightarrow \{\pm 1\}$ to obtain a Morse function $f_{1} \colon M \rightarrow \mathbb{R}$ such that $f_{1}|_{\partial M} = g$ and $\chi_{+}[f_{1}] = \chi_{+}[f]$.
Moreover, by perturbing the critical points of $f_{1}$, we may assume that $f_{1}$ is injective on $S(f_{1}) \sqcup S(f_{1}|_{\partial M})$.
Thus, $f_{1}$ is a $C^{\infty}$ stable function, and \Cref{main theorem} implies that $f_{1}$ represents the class $[f \colon M \rightarrow \mathbb{R}] \in \mathcal{C}_{n}$.

Conversely, as for the proof that the homomorphism (\ref{homomorphism from admissible cobordism to cusp cobordism}) is injective, we suppose that the Morse function $f \colon M \rightarrow \mathbb{R}$ represents a class in $b \mathfrak{M}_{n}$, and is nullcobordant in $\mathcal{C}_{n}$.
Then, by \Cref{definition cusp cobordism}, there is an oriented cobordism $(W, V)$ from $M$ to $\emptyset$, and a map $F_{0} \colon W \rightarrow [0, 1] \times \mathbb{R}$ such that $F_{0}^{-1}(\mathbb{R} \times \{0\}) = M$, $F_{0}^{-1}(\mathbb{R} \times \{1\}) = \emptyset$, and the following properties hold:
\begin{enumerate}[(i)]
\item For some $\varepsilon > 0$ there exists a collar neighborhood (with corners) $[0, \varepsilon) \times M \subset W$ of $\{0\} \times M = M \subset W$ such that $F_{0}|_{[0, \varepsilon) \times M} = \operatorname{id}_{[0, \varepsilon)} \times f$.
\item The restriction $F_{0}|_{W \setminus M}$ is an admissible map.
\item The restrictions $F_{0}|_{W \setminus \partial W}$ and $F_{0}|_{V \setminus \partial V}$ are generic maps into the plane.
\end{enumerate}
We choose a collar neighborhood (with corners) $V \times [0, \infty) \subset W$ of $V \subset W$ that extends $[0, \varepsilon') \times \partial V \times [0, \infty) \subset [0, \varepsilon') \times M$ for some $\varepsilon' \in (0, \varepsilon)$.
Applying implication (i) $\Rightarrow$ (ii) of \Cref{proposition extension of G} to the maps $h = f|_{\partial V \times [0, \infty)}$ and $G = F_{0}|_{V}$, we obtain a map $v \colon S(F_{0}|_{V}) \rightarrow \mathbb{R}^{2}$ with the properties of \Cref{proposition extension of G}(ii) with respect to $h = f|_{\partial V \times [0, \infty)}$ and $G = F_{0}|_{V}$.
Hence, it follows from the implication (i) $\Rightarrow$ (ii) of \Cref{proposition cusps and vector fields} applied to $g = f|_{\partial V}$, $G = F_{0}|_{V}$, and $\sigma = \sigma_{f}$, that $G = F_{0}|_{V}$ satisfies property (ii) of \Cref{proposition cusps and vector fields} with respect to $g = f|_{\partial V}$ and $\sigma = \sigma_{f}$.
Since $f|_{\partial M}$ is a $C^{\infty}$ stable function, we may perturb the generic map $F_{0}|_{V \setminus \partial V} \colon V \setminus \partial V \rightarrow \mathbb{R}^{2}$ slightly on a compact subset of $V \setminus \partial V$ to obtain a map $G_{0} \colon V \rightarrow \mathbb{R}^{2}$ such that the restriction $G_{0}|_{V \setminus \partial V} \colon V \setminus \partial V \rightarrow \mathbb{R}^{2}$ is a proper $C^{\infty}$ stable map.
(The desired perturbation can be obtained by means of standard techniques as follows.
First, we let the cusps of $H_{0} = G_{0}|_{V \setminus \partial V}$ propagate slightly as explained in Lemma (3) in Section (4.6) in \cite[p. 290]{lev} to achieve that no two cusps of the resulting generic map $H_{1} \colon V \setminus \partial V \rightarrow \mathbb{R}^{2}$ have the same image point in the plane.
Finally, by means of \Cref{corollary perturb}, we perturb $H_{1}$ near its fold lines on a compact subset of $V \setminus \partial V$ in order to produce the desired proper $C^{\infty}$ stable map $H_{2} = G_{0}|_{V \setminus \partial V} \colon V \setminus \partial V \rightarrow \mathbb{R}^{2}$.
More precisely, we perturb $H_{1}$ near a finite number of small embedded compact intervals $[0, 1] \rightarrow S(H_{1}) \setminus \Sigma$, where $\Sigma$ denotes the set of cusps of $H_{1}$.
These intervals are chosen to be pairwise disjoint, and such that $H_{1}$ restricts to an immersion with normal crossings on the complement of their union in $S(H_{1}) \setminus \Sigma$.)
After the perturbation, we have $G_{0}|_{[0, \varepsilon'') \times \partial V} = \operatorname{id}_{[0, \varepsilon'')} \times f|_{\partial V}$ for some $\varepsilon'' \in (0, \varepsilon')$, and $G = G_{0}$ still satisfies property (ii) of \Cref{proposition cusps and vector fields} with respect to $g = f|_{\partial V}$ and $\sigma = \sigma_{f}$.
Hence, it follows from the implication (ii) $\Rightarrow$ (i) of \Cref{proposition cusps and vector fields} applied to $g = f|_{\partial V}$, $G = G_{0}$, and $\sigma = \sigma_{f}$, that there is a map $v \colon S(G_{0}) \rightarrow \mathbb{R}^{2}$ with the properties of \Cref{proposition cusps and vector fields}(i) with respect to $g = f|_{\partial V}$, $G = G_{0}$, and $\sigma = \sigma_{f}$.
By the implication (ii) $\Rightarrow$ (i) of \Cref{proposition extension of G} applied to $h = f|_{\partial V \times [0, \infty)}$ and $G = G_{0}$, there exists a map $H \colon V \times [0, \infty) \rightarrow \mathbb{R}^{2}$ such that $H|_{V \times \{0\}} = G_{0}$, $H|_{[0, \varepsilon''') \times \partial V \times [0, \infty)} = \operatorname{id}_{[0, \varepsilon''')} \times f|_{\partial V \times [0, \infty)}$ for some $\varepsilon''' \in (0, \varepsilon'')$, and $H|_{(V \setminus \partial V) \times [0, \infty)}$ is admissible.
Next, we use \Cref{proposition generic extension} to extend the $M \cup_{\partial M} V$-germ of
$$
H|_{V \times [0, \infty)} \cup (\operatorname{id}_{[0, \varepsilon''')} \times f) \colon (V \times [0, \infty)) \cup ([0, \varepsilon''') \times M) \rightarrow \mathbb{R}^{2}
$$
to a cusp cobordism $F_{1} \colon W \rightarrow \mathbb{R}^{2}$ from $f$ to $f_{\emptyset} \colon \emptyset \rightarrow \mathbb{R}$ in the sense of \Cref{definition cusp cobordism} (compare \Cref{remark target plane}).
Finally, by perturbing $F_{1}$ slightly on a compact subset of $W \setminus \partial W$ by means of the same techniques that we used in the construction of $G_{0} \colon V \rightarrow \mathbb{R}^{2}$ above, we obtain a cusp cobordism $F_{2} \colon W \rightarrow \mathbb{R}^{2}$ from $f$ to $f_{\emptyset} \colon \emptyset \rightarrow \mathbb{R}$ such that $F_{2}|_{V} = G_{0}$, and the restriction $F_{2}|_{W \setminus M}$ is a proper admissible $C^{\infty}$ stable map.
Thus, $F_{2}$ is the desired admissible cobordism from $f$ to $f_{\emptyset} \colon \emptyset \rightarrow \mathbb{R}$.
\end{proof}

\bibliographystyle{amsplain}

\end{document}